\definecolor{uuuuuu}{rgb}{0.26666666666666666,0.26666666666666666,0.26666666666666666}
\definecolor{xdxdff}{rgb}{0.49019607843137253,0.49019607843137253,1.}
\definecolor{ffqqqq}{rgb}{1.,0.,0.}
\definecolor{uuuuuu}{rgb}{0.26666666666666666,0.26666666666666666,0.26666666666666666}
\definecolor{qqwuqq}{rgb}{0.,0.39215686274509803,0.}
\definecolor{zzttqq}{rgb}{0.6,0.2,0.}
\definecolor{xdxdff}{rgb}{0.49019607843137253,0.49019607843137253,1.}
\definecolor{qqqqff}{rgb}{0.,0.,1.}
\definecolor{cqcqcq}{rgb}{0.7529411764705882,0.7529411764705882,0.7529411764705882}
\theoremstyle{plain}
\newtheorem{theorem}[subsection]{Theorem}
\newtheorem{lemma}[subsection]{Lemma}
\theoremstyle{definition}
\newtheorem{prop}[subsection]{Proposition}
\newtheorem{cor}[subsection]{Corollary}
\newtheorem{remark}[subsection]{Remark}
\newtheorem{notation}[subsection]{Notation}
\newtheorem{note}[subsection]{Note}
\newcommand{\uu}{\cup}
\newcommand{\ii}{\cap}
\newcommand{\UU}{\bigcup}
\newcommand{\II}{\bigcap}
\newcommand{\sci}{\subset}
\newcommand{\es}{\emptyset}
\newcommand{\set}[1]{\{#1\}}
\newcommand{\ga}{\alpha}
\newcommand{\gb}{\beta}
\newcommand{\gd}{\delta}
\renewcommand{\gg}{\gamma}
\newcommand{\go}{\omega}
\newcommand{\gs}{\sigma}
\newcommand{\gt}{\tau}
\newcommand{\tit}{\textit}
\newcommand{\C}[1]{\mathcal{#1}}
\newcommand{\D}[1]{\mathbb{#1}}
\newcommand{\te}{\text}
\newcommand{\nd}{\noindent}
\begin{document}
\nd To appear, Communications of the Korean Mathematical Society
\title[Quantization for a probability distribution generated by an infinite IFS]
{Quantization for a probability distribution generated by an infinite iterated function system}

\author[L. Roychowdhury]{Lakshmi Roychowdhury}
\address{Lakshmi Roychowdhury \\ School of Mathematical and Statistical Sciences\\
University of Texas Rio Grande Valley\\
 1201 West University Drive\\
Edinburg, TX 78539, USA}
\email{lakshmiroychowdhury82@gmail.com}

\author[M.K. Roychowdhury]{Mrinal Kanti Roychowdhury}
\address{Mrinal Kanti Roychowdhury \\ School of Mathematical and Statistical Sciences\\
University of Texas Rio Grande Valley\\
 1201 West University Drive\\
Edinburg, TX 78539, USA}
\email{mrinal.roychowdhury@utrgv.edu}
\thanks{The research of the second author was supported by U.S. National Security Agency (NSA) Grant H98230-14-1-0320}

\subjclass[2020]{Primary 60Exx, 28A80, 94A34}
\keywords{probability measure, infinite iterated function system, optimal set, quantization error}
\pagestyle{myheadings}\markboth{Lakshmi Roychowdhury and  Mrinal Kanti Roychowdhury}{Quantization for a probability distribution generated by an infinite IFS}
\begin{abstract}
Quantization for probability distributions concerns the best approximation of a $d$-dimensional probability distribution $P$ by a discrete probability with a given number $n$ of supporting points. In this paper, we have considered a probability measure generated by an infinite iterated function system associated with a probability vector on $\mathbb R$. For such a probability measure $P$, an induction formula to determine the optimal sets of $n$-means and the $n$th quantization error for every natural number $n$ is given. In addition, using the induction formula we give some results and observations about the optimal sets of $n$-means for all $n\geq 2$.

\end{abstract}

\maketitle

\section{Introduction}
Quantization is the process of converting a continuous analog signal into
a digital signal of $k$ discrete levels, or converting a digital signal of $n$
levels into another digital signal of $k$ levels, where $k < n$. It is must
when analog quantities are represented, processed, stored, or transmitted
by a digital system, or when data compression is required. It is a classic
and still very active research topic in source coding and information
theory. A good survey about the historical development of the theory has been provided by Gray and Neuhoff in \cite{GN}. For more applied aspects of quantization the reader is referred to the book of Gersho and Gray (see \cite{GG}). For mathematical treatment of quantization one may consult Graf-Luschgy's book (see \cite{GL2}). Interested readers can also see \cite{AW, GKL, GL1, Z}. Let $\D R^d$ denote the $d$-dimensional Euclidean space equipped with the Euclidean metric $\|\cdot\|$. Let $P$ be a Borel probability measure on $\D R^d$. Then, the $n$th quantization error for $P$, denoted by $V_n:=V_n(P)$, is defined by
\[V_n(P)=\inf_{\ga \in \C D_n} \int \min_{a \in \ga} \|x-a\|^2 dP(x),\]
where $\C D_n:=\set{\ga \sci \D R^d : 1\leq \te{card}(\ga)\leq n}$. The set $\ga$ for which the infimum occurs and contains no more than $n$ points is called an optimal set of $n$-means for $P$, and such a set exists if $\int\|x\|^2 dP(x)<\infty$ (see \cite{ GKL, GL1, GL2}). The set of all optimal sets of $n$-means for a probability measure $P$ is denoted by $\C C_n(P)$. It is known that for a Borel probability measure $P$ if the support of $P$ contains infinitely many elements, then an optimal set of $n$-means always has exactly $n$-elements (see \cite[Theorem 4.12]{GL2}). Let $\ga$ be a finite set and $a\in \ga$. Then, the \tit{Voronoi cell}, or \tit{Voronoi region} $M(a|\ga)$ is the set of all elements in $\D R^d$ whose distance to $a$ is not greater than their distance to other elements in $\ga$, i.e.,
\[M(a|\ga)=\set{x \in \D R^d : \|x-a\|=\min_{b \in \ga}\|x-b\|}.\] A Borel measurable partition $\set{A_a : a \in \ga}$ of $\D R^d$  is called a \tit{Voronoi partition} of $\D R^d$ with respect to $\ga$ (and $P$) if
$A_a \sci M(a|\ga) \te{ ($P$-a.e.) for every $a \in \ga$}.$
 The following proposition is known (see \cite{GG, GL1}).
\begin{prop} \label{prop0}
Let $\alpha$ be an optimal set of $n$-means, $a \in \ga$, and $M (a|\ga)$ be the Voronoi region generated by $a\in \ga$. Then, for every $a \in\ga$,

$(i)$ $P(M(a|\ga))>0$, $(ii)$ $ P(\partial M(a|\ga))=0$, $(iii)$ $a=E(X : X \in M(a|\ga))$, and $(iv)$ $P$-almost surely the set $\set{M(a|\ga) : a \in \ga}$ forms a Voronoi partition of $\D R^d$.
\end{prop}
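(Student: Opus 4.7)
This is the standard first-order optimality characterization of an optimal $n$-point quantizer, and my plan is to deduce the four parts in the order (i), (iii), (ii), (iv), each via a perturbation of $\ga$ that stays admissible in $\C D_n$. For (i) I argue by contradiction: if $P(M(a|\ga))=0$, then the integrand $\min_{b\in\ga}\|x-b\|^2$ agrees $P$-almost surely with $\min_{b\in\ga\setminus\{a\}}\|x-b\|^2$, so $\ga\setminus\{a\}$ attains the same cost with at most $n-1$ points. Provided the support of $P$ contains at least $n$ points, some Voronoi cell of $\ga\setminus\{a\}$ must carry at least two points of $\te{supp}(P)$; inserting a new center inside that cell strictly lowers the cost, violating the optimality of $\ga$.

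For (iii) I vary a single center $b\in\ga$, replacing it by an arbitrary $b'\in\D R^d$ while holding $\ga\setminus\{b\}$ fixed. Since $b'$ together with the remaining centers cannot yield a larger pointwise minimum outside $M(b|\ga)$, the new quantization cost is bounded above by
\[
\sum_{a\in\ga,\, a\neq b}\int_{M(a|\ga)}\|x-a\|^2\,dP\ +\ \int_{M(b|\ga)}\|x-b'\|^2\,dP.
\]
Using (i) to guarantee $P(M(b|\ga))>0$, the last integral equals $P(M(b|\ga))\,\|b'-E(X\mid X\in M(b|\ga))\|^2$ plus a term independent of $b'$, so optimality in $b'$ forces $b=E(X\mid X\in M(b|\ga))$.

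For (ii) I show that no perpendicular bisector $H$ between two points of $\ga$ carries positive $P$-mass. If $P(H)>0$ for the bisector of $a,a'\in\ga$, I translate $a$ by $\vep$ along the unit normal to $H$ pointing toward $a'$: points of $H$ then become strictly closer to the shifted center than to $a'$, producing a first-order gain of order $\vep\,\|a-a'\|\,P(H)$, while the contribution from $M(a|\ga)\setminus H$ changes only by $O(\vep^2)$ once the centroid identity (iii) kills the first-order term $\langle E[(X-a)\mathbf{1}_{M(a|\ga)}],n\rangle$. This contradicts optimality. Part (iv) is then formal: the $M(a|\ga)$ cover $\D R^d$ by definition, pairwise intersections lie in the finite union of bisecting hyperplanes, and this union has $P$-measure zero by (ii).

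The main obstacle is part (ii): translating the generator $a$ simultaneously deforms the whole Voronoi partition, so one has to bound the mass that migrates across all the other bisectors during the perturbation. The argument hinges on the finiteness of $\ga$, which limits the number of bisecting hyperplanes in play, together with the centroid identity (iii), which guarantees that the first-order term on $M(a|\ga)\setminus H$ vanishes and leaves the promised first-order decrease on $H$ itself uncancelled.
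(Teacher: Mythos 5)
The paper does not prove this proposition; it states it as a known fact and cites \cite{GG, GL1}, so there is no in-paper argument to compare against. Evaluating your proposal on its own terms: parts (i), (iii), and (iv) follow the standard perturbation arguments and are essentially correct, but part (ii) has a genuine gap, and you have in fact misapplied the centroid identity at exactly the point you flag as the main obstacle.

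Concretely, for $x$ on the bisector $H$ of $a$ and $a'$ one has $\langle x-a,n\rangle=\tfrac12\|a-a'\|$, where $n$ is the unit vector from $a$ to $a'$. The identity from (iii) is $\int_{M(a|\ga)}(x-a)\,dP=0$, taken over the \emph{whole} closed cell $M(a|\ga)$, which contains $H$. Hence
\[
\int_{M(a|\ga)\setminus H}\langle x-a,n\rangle\,dP
=-\int_{H}\langle x-a,n\rangle\,dP
=-\tfrac12\|a-a'\|\,P(H)\neq 0
\]
whenever $P(H)>0$. So (iii) does \emph{not} kill the first-order term on $M(a|\ga)\setminus H$. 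If you translate $a$ by $\vep$ toward $a'$ and bound the new cost by the old partition, the change on $M(a|\ga)$ is
\[
\int_{M(a|\ga)}\bigl(\|x-a-\vep n\|^2-\|x-a\|^2\bigr)\,dP
=-2\vep\int_{M(a|\ga)}\langle x-a,n\rangle\,dP+\vep^2 P(M(a|\ga))
=\vep^2P(M(a|\ga)),
\]
so the promised first-order gain of $\vep\|a-a'\|P(H)$ on $H$ is cancelled exactly by a first-order loss of the same size on $M(a|\ga)\setminus H$; the net effect is $O(\vep^2)$ and in fact nonnegative, which produces no contradiction. The perturbation as you designed it cannot detect positive boundary mass.

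Two standard repairs exist, and either replaces your step (ii). First, a perturbation that actually works is: move $a$ \emph{away} from $a'$ by $\vep$ (to $a-\vep n$) while simultaneously reassigning $H$ from $a$'s cell to $a'$'s cell; since $\|x-a\|=\|x-a'\|$ on $H$, the reassignment costs nothing, and the centroid identity then gives a cost change of $-\vep\|a-a'\|P(H)+O(\vep^2)<0$, contradicting optimality. Second (cleaner, no perturbation): the derivation of (iii) works for any measurable tiebreak of the Voronoi diagram, so $a$ is the centroid of $M(a|\ga)$ and also of $M(a|\ga)\setminus H$ (both having positive mass by the argument in (i) applied to the respective tiebreak), which forces $a=E(X\mid X\in H)$; the same reasoning with $a'$ forces $a'=E(X\mid X\in H)$, contradicting $a\neq a'$. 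Note also that in the paper's actual setting $d=1$, so $\partial M(a|\ga)$ consists of at most two points and (ii) is immediate from continuity of $P$; the gap only matters for the general $\D R^d$ statement you set out to prove.
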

Since for $a\in \ga$,  $a=E(X : X\in M(a|\ga))=\frac{1}{P(M(a|\ga))}\int_{M(a|\ga)} xdP(x)$, we can say that the elements in an optimal set of $n$-means are also the centroids of their own Voronoi regions with respect to the probability distribution $P$. For details in this regard one can see \cite{DFG, R3}.

Let $M$ denote either the set $\set{1, 2, \cdots, N}$ for some positive integer $N\geq 2$, or the set $\D N$ of natural numbers. A collection $\set{S_j : j\in M}$ of similarity mappings, or similitudes, on $\D R^d$ with similarity ratios $\set{s_j : j\in M}$ is contractive if $\sup\set{s_j : j\in M}<1$. If $J$ is the limit set of the iterated function system then it is known that $J$ satisfies the following invariance relation (see \cite{H, MaU, M}):
\[J=\UU_{j\in M} S_j(J).\]  The iterated function system $\set{S_j: j \in M}$ satisfies the \tit{open set condition} (OSC),  if there exists a bounded nonempty open set $U \sci \D R^d$ such that $S_j(U) \sci U$ for all $j\in M$, and $S_i(U) \II S_j(U) =\es$ for $i, j\in M$ with $i\neq j$.
Let $(p_j: j\in M)$ be a probability vector, with $p_j>0$ for all $j\in M$. Then, there exists a unique Borel probability measure $P$ on $\D R^d$
(see \cite{H}, \cite{MaU}, \cite{M}, etc.), such that
\begin{equation*} \label{eq10} P=\sum_{j\in M} p_j P \circ S_j^{-1},\end{equation*}
where $P\circ S_j^{-1}$ denotes the image measure of $P$ with respect to
$S_j$ for $j\in M$.
Such a $P$ has support the limit set $J$ if $M$ is finite, or the closure of $J$ if $M$ is infinite.

Let $P$ be a Borel probability measure on $\D R$ generated by the two contractive similarity mappings $S_1$ and $S_2$ associated with the probability vector $(\frac 12, \frac 12)$ such that $S_1(x)=\frac 13x$ and $S_2(x)=\frac 13 x +\frac 23$ for all $x\in \D R$.  Then, $P=\frac 12 P\circ S_1^{-1}+\frac 12 P\circ S_2^{-1}$ and it has support the classical Cantor set generated by $S_1$ and $S_2$. For this probability measure Graf and Luschgy gave a closed formula to determine the optimal sets of $n$-means and the $n$th quantization errors for all $n\geq 2$ (see \cite{GL3}).
Later for $n\geq 2$, L. Roychowdhury gave an induction formula to determine the optimal sets of $n$-means and the $n$th quantization errors for a probability distribution $P$ on $\D R$, given by $P=\frac 14 P\circ S_1^{-1}+\frac 34 P\circ S_2^{-1}$ which has support the Cantor set generated by $S_1$ and $S_2$, where $S_1(x)=\frac 14 x$ and $S_2(x)=\frac 12 x+\frac 12$ for all $x \in \D R$ (see \cite{R1}). M. Roychowdhury (see \cite{R2}) gave an infinite extension of the result of Graf-Luschgy (see \cite{GL3}). \c C\"omez and Roychowdhury (see \cite{CR}) gave a closed formula to determine the optimal sets of $n$-means and the $n$th quantization error for a probability measure supported by a Cantor dust.

In this paper, we made an infinite extension of the work of L. Roychowdhury (see \cite{R1}). Let $P$ be a Borel probability measure on $\D R$ given by $P=\frac 1 4 P\circ S_1^{-1}+\sum_{j=2}^\infty \frac 3 {2^{j+1}} P\circ S_j^{-1}$, i.e., $P$ is generated by an infinite collection of similitudes $\set{S_j}_{j=1}^\infty$ associated with the probability vector $(\frac 14, \frac {3}{2^3}, \frac {3}{2^4}, \cdots)$ such that $S_j(x)=\frac 1 {2^{j+1}} x+1-\frac{1}{2^{j-1}}$ for all $x\in\D R$, and for all $j\in \D N$.  For this probability measure, in this paper, we investigate the optimal sets of $n$-means and the $n$th quantization errors for all $n\in \D N$.
The arrangement of the paper is as follows: In Lemma~\ref{lemma11} and Lemma~\ref{lemma12}, we obtain the optimal sets of $n$-means and the corresponding quantization errors for $n=2$ and $n=3$; Proposition~\ref{prop1}, Proposition~\ref{prop2}, Proposition~\ref{prop3}, and Proposition~\ref{prop50} give some properties about the optimal sets of $n$-means and the $n$th quantization errors. In Theorem~\ref{Th1} we state and prove an induction formula to determine the optimal sets of $n$-means for all $n\geq 2$. In addition, using the induction formula we obtain some results and observations about the optimal sets of $n$-means which are given in Section~4; a tree diagram of the optimal sets of $n$-means for a certain range of $n$ is also given.

\section{Preliminaries}

By a word $\go$ over the set $\D N=\set{1, 2, 3, \cdots}$ of natural numbers it is meant that $\go:=\go_1\go_2\cdots \go_k\in \D N^k$ for some $k\geq 1$. Here $k$ is called the length of the word $\go$ and is denoted by $|\go|$. A word of length zero is called the empty word and is denoted by $\es$. Let $\D N^\ast$ denote the set of all words over the alphabet $\D N$ including the empty word $\es$. For any two words $\go:=\go_1\go_2\cdots \go_k$ and $\gt:=\gt_1\gt_2\cdots\gt_m \in \D N^\ast$, where $k, m\geq 1$, by $\go\gt$ it is meant the concatenation of the words $\go$ and $\gt$, i.e., $\go\gt=\go_1\go_2\cdots \go_k\gt_1\gt_2\cdots\gt_m$. If $\go:=\go_1\go_2\cdots \go_k$, we write $\go^-:=\go_1\go_2\cdots\go_{k-1}$ where $k\geq 1$, i.e., $\go^-$ is the word obtained from the word $\go$ by deleting the last letter of $\go$. For $\go \in \D N^\ast$, by $(\go, \infty)$ it is meant the set of all words $\go^-(\go_{|\go|}+j)$, obtained by concatenation of the word $\go^-$ with the word $\go_{|\go|}+j$ for $j\in \D N$, i.e.,
\[(\go, \infty)=\set{\go^-(\go_{|\go|}+j) : j\in \D N}.\]
Let $(p_j)_{j=1}^\infty$ be a probability vector such that $p_1=\frac 14$ and $p_j=\frac 3 {2^{j+1}}$ for all $j\geq 2$. Let $\set{S_j}_{j=1}^\infty$ be an infinite collection of similitudes associated with the probability vector $(p_j)_{j=1}^\infty$ such that
\[S_j(x)=\frac 1{2^{j+1}} x+1-\frac 1{2^{j-1}} \]
for all $j \in \D N$ and for all $x\in \D R$. Then, as mentioned in the previous section, there exists a unique Borel probability measure $P$ on $\D R$ such that
\[P=\sum_{j=1}^\infty  p_j P\circ S_j^{-1},\]
which has support lying in the closed interval $[0, 1]$. This paper deals with this probability measure $P$.
For $\go=\go_1\go_2\cdots\go_n\in \D N^n$, write
\[S_\go:=S_{\go_1}\circ\cdots \circ S_{\go_n}, \quad J_\go:=S_\go(J), \quad s_\go:=s_{\go_1}\cdots s_{\go_n}, \quad p_\go:=p_{\go_1}\cdots p_{\go_n},\]
where $J:=J_\es=[0, 1]$. We also assume $p_\es=1$ and $s_\es=1$. Then, for any $\go \in \D N^\ast$, we write
\[J_{(\go,\infty)}:=\mathop{\uu}\limits_{j=1}^\infty J_{{\go^-(\go_{|\go|}+j)}} \te{ and }      p_{(\go, \infty)}:=P(J_{(\go, \infty)})=\sum_{j=1}^\infty P(J_{\go^-(\go_{|\go|}+j)})=\sum_{j=1}^\infty p_{\go^-(\go_{|\go|}+j)}.\]
Notice that for any $k\in \D N$, $p_{(k, \infty)}=1-\sum_{j=1}^kp_j$, and for any word $\go \in \D N^\ast$, $p_{(\go, \infty)}=p_{\go^-}-\sum_{j=1}^{w_{|\go|}}p_{\go^-j}$. To avoid any confusion among the readers, we would like to mention that in the paper $dP(x)$ which is $P(dx)$ is identified as $dP$.

\begin{lemma} \label{lemma1}
Let $f : \mathbb R \to \mathbb R^+$ be Borel measurable and $k\in \mathbb N$. Then
\[\int f dP=\sum_{\go \in \D N^k}p_\go \int f \circ S_\go dP.\]
\end{lemma}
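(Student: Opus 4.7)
The plan is to proceed by induction on $k$, with the base case $k=1$ reducing directly to the self-similarity identity $P=\sum_{j=1}^\infty p_j P\circ S_j^{-1}$ satisfied by the measure $P$.

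For the base case $k=1$, I would first verify the identity for $f=\mathbf 1_A$ where $A$ is a Borel set: here the left-hand side is $P(A)$ and, by the change of variables formula, the right-hand side is $\sum_{j=1}^\infty p_j P(S_j^{-1}(A))$, so the equality is exactly the defining invariance of $P$. By linearity this extends to nonnegative simple functions, and by the monotone convergence theorem together with Tonelli's theorem (used to interchange the monotone limit with the infinite sum in $j$) it extends to all nonnegative Borel measurable $f$. Finally, for a general Borel measurable $f$ for which $\int f\,dP$ is defined, I would write $f=f^+-f^-$ and subtract.

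For the inductive step, assume the identity holds for some $k\geq 1$ and every Borel measurable $f$. Given $f$, the inductive hypothesis gives
\[\int f\,dP=\sum_{\go\in\D N^k}p_\go\int f\circ S_\go\,dP.\]
Applying the $k=1$ case to each integrand $f\circ S_\go$ yields
\[\int f\circ S_\go\,dP=\sum_{j=1}^\infty p_j\int f\circ S_\go\circ S_j\,dP=\sum_{j=1}^\infty p_j\int f\circ S_{\go j}\,dP,\]
since $S_\go\circ S_j=S_{\go j}$. Substituting back and using $p_\go p_j=p_{\go j}$, the double sum $\sum_{\go\in\D N^k}\sum_{j=1}^\infty$ reindexes as a single sum over $\D N^{k+1}$, which gives the desired formula for $k+1$.

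The only genuine subtlety lies in the interchange of the infinite sum over $\D N^k$ (or over $j$ in the inductive step) with the integral; this is the main technical point since the index set is infinite. It is handled exactly as in the base case: first establish the formula for nonnegative $f$ by Tonelli's theorem (all terms being nonnegative, the order of summation and integration may be interchanged freely), and then extend to signed $f$ by the positive/negative part decomposition, provided $\int f\,dP$ exists. No further obstacle arises because the probabilities $p_\go$ sum to one at every level, so the rearrangements are legitimate and the identity propagates cleanly through the induction.
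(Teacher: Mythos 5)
Your proof is correct and follows essentially the same route as the paper: an induction on $k$ grounded in the self-similarity identity $P=\sum_j p_j\,P\circ S_j^{-1}$. The paper phrases the induction at the level of the measure identity $P=\sum_{\omega\in\mathbb N^k}p_\omega\,P\circ S_\omega^{-1}$ and then simply asserts the integral version, whereas you spell out the standard passage (indicators, simple functions, Tonelli/monotone convergence, positive and negative parts) that the paper leaves implicit — a welcome elaboration, not a different method.
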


\begin{proof}
We know $P=\sum_{j=1}^\infty p_j P\circ S_j^{-1}$, and so by induction $P= \sum_{\go \in \D N^k} p_\go P\circ S_\go^{-1}$, and thus the lemma is yielded.
\end{proof}

\begin{lemma} \label{lemma2} Let $X$ be a random variable with probability distribution $P$. Then, the expectation $E(X)$ and the variance $V:=V(X)$ of the random variable $X$ are given by
\[E(X)=\frac {4}{7}  \text{ and } V(X)=\frac{288}{3577}=0.0805144.\]
\end{lemma}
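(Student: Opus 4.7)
The plan is to exploit the self-similarity $P = \sum_{j=1}^\infty p_j P\circ S_j^{-1}$ to turn each of $E(X)$ and $V(X)$ into a single linear equation in itself, whose coefficients are explicit geometric sums determined by the data $p_j = 3/2^{j+1}$ (with the exceptional value $p_1 = 1/4$) and $S_j(x) = 2^{-(j+1)} x + (1-2^{-(j-1)})$.

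First I would treat the mean. Applying Lemma~\ref{lemma1} with $k=1$ and $f(x)=x$ gives
\[
E(X) \;=\; \sum_{j=1}^\infty p_j \int S_j(x)\,dP(x) \;=\; E(X)\sum_{j=1}^\infty \frac{p_j}{2^{j+1}} \;+\; \sum_{j=1}^\infty p_j\!\left(1-\frac{1}{2^{j-1}}\right).
\]
Splitting each sum into its $j=1$ term and a geometric tail over $j\geq 2$, I would compute $\sum_j p_j/2^{j+1}=\tfrac18$ and $\sum_j p_j/2^{j-1}=\tfrac12$, whence $E(X)=\tfrac18 E(X)+\tfrac12$, giving $E(X)=\tfrac47$. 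This matches the asserted value.

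For the variance I would use the centered identity, which avoids carrying a cross term. Since $E(X)=\mu$, Lemma~\ref{lemma1} with $f(x)=(x-\mu)^2$ yields, after writing $S_j(x)-\mu = 2^{-(j+1)}(x-\mu)+(S_j(\mu)-\mu)$ and using $\int (x-\mu)\,dP = 0$,
\[
V(X) \;=\; V(X)\sum_{j=1}^\infty \frac{p_j}{4^{j+1}} \;+\; \sum_{j=1}^\infty p_j\bigl(S_j(\mu)-\mu\bigr)^2.
\]
A short calculation shows $S_j(\mu)-\mu = \tfrac{3}{7}\bigl(1-2^{-(j-2)}\bigr)$ (in particular this vanishes for $j=2$, a convenient sanity check since $S_2$ fixes $\tfrac47$). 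Then $\sum_j p_j/4^{j+1}$ and $\sum_j p_j(S_j(\mu)-\mu)^2$ are once again geometric series in $2^{-j}$ and $4^{-j}$ with a separate $j=1$ contribution, which I would evaluate to obtain $\sum_j p_j/4^{j+1}=\tfrac{5}{224}$ and an explicit rational value for the second sum. Solving the resulting equation $V(X)\bigl(1-\tfrac{5}{224}\bigr) = \tfrac{219}{224}\,V(X) = \text{(constant)}$ should deliver $V(X)=\tfrac{288}{3577}$.

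The only real obstacle is arithmetic bookkeeping: the prefactor $p_1 = 1/4$ breaks the clean pattern $p_j = 3/2^{j+1}$, so each series must be split as ``$j=1$ term $+$ tail'', and the tails mix powers $2^{-\alpha j}$ for $\alpha\in\{2,3\}$. Beyond that, the argument is a direct application of Lemma~\ref{lemma1} together with summation of two geometric series, with no analytic subtlety — boundedness of the support in $[0,1]$ justifies every interchange.
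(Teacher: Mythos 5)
Your derivation of the mean coincides with the paper's: both plug $f(x)=x$ into Lemma~\ref{lemma1}, split the resulting geometric sums into the $j=1$ term plus the tail over $j\geq 2$, and solve the fixed-point equation $E(X)=\tfrac18E(X)+\tfrac12$. For the variance, however, you take a genuinely different (and slightly cleaner) route. The paper plugs in $f(x)=x^2$, obtaining $E(X^2)=\tfrac{5}{224}E(X^2)+\tfrac{39}{98}$, solves for $E(X^2)=\tfrac{208}{511}$, and then subtracts $E(X)^2$; this carries a cross term in $E(X)$ through the expansion and finishes with the reduction $\tfrac{208}{511}-\tfrac{16}{49}=\tfrac{288}{3577}$. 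You instead plug in $f(x)=(x-\mu)^2$ with $\mu=\tfrac47$; since each $S_j$ is affine with ratio $s_j=2^{-(j+1)}$, the cross term $2s_j(S_j(\mu)-\mu)(x-\mu)$ integrates to zero, leaving the self-contained fixed point $V=\tfrac{5}{224}V+\sum_jp_j\bigl(S_j(\mu)-\mu\bigr)^2$. With $S_j(\mu)-\mu=\tfrac37\bigl(1-2^{-(j-2)}\bigr)$ --- which vanishes at $j=2$, a pleasant sanity check not exploited in the paper --- the remaining sum evaluates to $\tfrac{27}{343}$, and indeed $V=\tfrac{224}{219}\cdot\tfrac{27}{343}=\tfrac{288}{3577}$. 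Both methods reduce to the same geometric-series bookkeeping, but yours avoids computing $E(X^2)$ separately and lands directly on $V$, which is a modest but real simplification.
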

\begin{proof} Using Lemma~\ref{lemma1}, we have
\begin{align*}
&E(X)=\int x dP=\frac 1 4 \int S_1(x) dP+\sum_{j=2}^\infty \frac 3{2^{j+1}} \int S_j(x) dP\\
&=\frac{1}{16} \int x dP+\sum_{j=2}^\infty \frac 3{2^{j+1}} \int\Big(\frac 1{2^{j+1}} x+1-\frac 1{2^{j-1}}\Big) dP=\frac1{16} E(X)+\frac1{16} E(X)+ \frac 12,
\end{align*}
which implies $E(X)=\frac 47$. Now,
\begin{align*}
&E(X^2)=\int x^2 dP= \frac 14 \int (\frac 14 x)^2 dP+\sum_{j=2}^\infty \frac 3{2^{j+1}} \int\Big(\frac 1{2^{j+1}} x+1-\frac 1{2^{j-1}}\Big)^2 dP\\
&=\frac 1{64}E(X^2)+\sum_{j=2}^\infty\frac 3{2^{j+1}}  \int \Big(\frac 1{4^{j+1}} x^2 + \frac 2{2^{j+1}} (1-\frac{1}{2^{j-1}}) x+(1-\frac 1{2^{j-1}})^2 \Big)dP\\
&=\frac 1{64}E(X^2)+\frac{3}{448} E(X^2)+\frac 1{14} E(X)+\frac 5 {14}=\frac{5}{224} E(X^2)+\frac{39}{98},
\end{align*}
which yields $E(X^2)=\frac{208}{511}$. Thus,
$V(X)=E(X^2)-\left(E(X)\right)^2 =\frac{288}{3577}=0.0805144,$
which is the lemma.
\end{proof}

\begin{lemma} \label{lemma3}
For any $k\geq 2$, we have
\[E(X | X \in J_k\uu J_{k+1}\uu \cdots)=1-\frac{8}{7}\frac 1{2^k}.\]
\end{lemma}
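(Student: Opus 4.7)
The plan is to decompose the conditional expectation using the self-similarity of $P$ and then sum a pair of geometric series. Since the similitudes satisfy the open set condition, the sets $J_j=S_j([0,1])$, $j\ge 2$, are essentially disjoint (overlap on a $P$-null set), so for the denominator I would first compute
\[
P(J_k\uu J_{k+1}\uu\cdots)=\sum_{j=k}^\infty p_j=\sum_{j=k}^\infty\frac{3}{2^{j+1}}=\frac{3}{2^k}.
\]

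For the numerator, the key observation is that the conditional distribution of $X$ on $J_j$ is $P\circ S_j^{-1}$. Thus, applying Lemma~\ref{lemma1} with $k=1$ (or just the defining identity $P=\sum p_j P\circ S_j^{-1}$) together with the explicit affine form $S_j(x)=2^{-(j+1)}x+1-2^{-(j-1)}$ and the value $E(X)=\tfrac{4}{7}$ from Lemma~\ref{lemma2} gives
\[
E(X\,;\,X\in J_j)=p_j\int S_j(x)\,dP=p_j\Bigl(\tfrac{1}{2^{j+1}}\cdot\tfrac{4}{7}+1-\tfrac{1}{2^{j-1}}\Bigr).
\]
I would then sum this over $j\ge k$, splitting into three geometric series in the ratios $1/4$, $1/2$, and $1/4$ respectively, and collect terms. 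Dividing by $P(J_k\uu J_{k+1}\uu\cdots)=3/2^k$ should collapse the $1/2^k$ terms to $1$ and leave a single $1/2^k$ tail, yielding the claimed $1-\tfrac{8}{7\cdot 2^k}$.

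There is no real obstacle here beyond bookkeeping; the one place to be attentive is to keep track of the factors of $2$ that arise from $p_j=3/2^{j+1}$, $s_j=1/2^{j+1}$, and the translation $1-1/2^{j-1}$, since three different powers of $2$ mix under the sum. As a sanity check at the end I would verify the limit $k\to\infty$ gives $1$ (the measure concentrates near the right endpoint of the support) and, separately, that the $j$th term of $E(X\mid X\in J_j)$ equals $1-\tfrac{6}{7\cdot 2^{j-1}}$, which is consistent with $S_j$ mapping $[0,1]$ into a small neighborhood of $1-2^{-(j-1)}$.
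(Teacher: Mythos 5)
Your proposal is correct and follows essentially the same route as the paper: both express the conditional expectation as $\bigl(\sum_{j\ge k}p_j\bigr)^{-1}\sum_{j\ge k}p_j S_j(\tfrac{4}{7})$, using $E(X\,;\,X\in J_j)=p_j S_j(E(X))$, and then evaluate the resulting geometric series. Your sanity checks (the $k\to\infty$ limit and the per-cylinder value $1-\tfrac{6}{7\cdot 2^{j-1}}$) are also correct.
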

\begin{proof}We have
\begin{align*}
&E(X | X \in J_k\uu J_{k+1}\uu \cdots)=\frac {1}{\sum_{j=k}^\infty p_j} \sum_{j=k}^\infty p_j S_j(\frac {4}{7})=
 \frac {2^k}{3} \Big(\sum_{j=k}^\infty \frac 3 {2^{j+1}}(\frac 1{2^{j+1}} \frac 4 7+1-\frac 1{2^{j-1}})\Big),
\end{align*}
which after simplification yields
$E(X | X \in J_k\uu J_{k+1}\uu \cdots)=1-\frac{8}{7}\frac 1{2^k},$
which is the lemma.
\end{proof}

The following notes are in order.

\begin{note} \label{note1} For $k\in \D N$, we have
$S_k(\frac 47)=\frac 1{2^{k+1}}\frac 47 +1-\frac 1{2^{k-1}}.$ Thus, by Lemma~\ref{lemma3}, for $k\in \D N$,
\begin{equation*}  E(X | X \in J_k\uu J_{k+1}\uu \cdots)=S_k(\frac 47) +\frac 1 7\frac 1{2^{k-2}}=S_k(\frac 47)+\frac 8 7 s_k.
\end{equation*}  Since for any $x_0 \in \D R$,
$\int(x-x_0)^2 dP =V(X)+(x_0-E(X))^2$, we can deduce that the optimal set of one-mean is the expected value and the corresponding quantization error is the variance $V$ of the random variable $X$. For $\go \in \D N^k$, $k\geq 1$, using Lemma~\ref{lemma1}, we have
\begin{align*}
&E(X : X \in J_\go) =\frac{1}{P(J_\go)} \int_{J_\go} xdP=\int_{J_\go} x  d(P\circ S_\go^{-1}(x))=\int S_\go(x)  dP=E(S_\go(X)).
\end{align*}
Since $S_j$ are similitudes, it is easy to see that $E(S_j(X))=S_j(E(X))$ for $j\in \D N$, and so by induction, $E(S_\go(X))=S_\go(E(X))$
 for $\go\in \D N^k$, $k\geq 1$.
\end{note}

\begin{note}

For words $\gb, \gg, \cdots, \gd$ in $\D N^\ast$, by $a(\gb, \gg, \cdots, \gd)$ we denote the conditional expectation of the random variable $X$ given that $X$ is in $J_\gb\uu J_\gg \uu\cdots \uu J_\gd,$ i.e.,
\begin{equation} \label{eq90} a(\gb, \gg, \cdots, \gd)=E(X|X\in J_\gb \uu J_\gg \uu \cdots \uu J_\gd)=\frac{1}{P(J_\gb\uu \cdots \uu J_\gd)}\int_{J_\gb\uu \cdots \uu J_\gd} x dP.
\end{equation}
Then, by Note~\ref{note1}, for $\go\in \D N^\ast$, we have
\begin{align} \label{eq1} \left\{ \begin{array}{ll} a(\go)=S_\go(E(X))=S_\go(\frac 47), \te{ and  } & \\
 a(\go, \infty)=E(X|X\in J_{\go^-(\go_{|\go|}+1)}\uu J_{\go^-(\go_{|\go|}+2)}\uu \cdots)=S_{\go^-(\go_{|\go|}+1)}(\frac 47)+\frac 87 s_{\go^-(\go_{|\go|}+1)}.&
 \end{array} \right.
 \end{align}
Moreover, for any  $\go \in \D N^\ast$  and for any $x_0\in \D R$, it is easy to see that
\begin{align} \label{eq2} \left\{ \begin{array}{ll}\int_{J_\go}(x-x_0)^2 dP=p_\go\int (x -x_0)^2 d(P\circ S_\go^{-1})=p_\go  \Big(s_\go^2V+(S_\go(\frac 47)-x_0)^2\Big),\te{ and } &\\
\int_{J_{(\go, \infty)}} (x -x_0)^2  dP=\sum_{j=1}^\infty p_{\go^-(\go_{|\go|}+j)} \Big(s_{\go^-(\go_{|\go|}+j)}^2 V + (S_{\go^-(\go_{|\go|}+j)}(\frac 47)-x_0)^2\Big).
\end{array} \right.
 \end{align}
The expressions \eqref{eq1} and \eqref{eq2}  are useful to obtain the optimal sets and the corresponding quantization errors with respect to the probability distribution $P$.
\end{note}

The following lemma plays a vital role in the paper.
\begin{lemma} \label{lemma4} Let $P$ be the probability measure as defined before and let $\go\in \D N^k$, $k\geq 1$. Then,
\[\int_{J_{\go}} (x-a(\go))^2  dP=p_\go s_\go^2 V, \te{ and } \int_{J_{(\go, \infty)}} (x-a(\go, \infty))^2  dP=\left\{\begin{array}{cc} \frac{43}{9} p_\go s_\go^2V\te{ if } \go_{|\go|}\geq 2, &\\
\frac{43}{3} p_\go s_\go^2V \te{ if } \go_{|\go|}=1. &
\end{array}\right.\]
\end{lemma}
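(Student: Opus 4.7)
The plan is to handle part (i) as an immediate consequence of \eqref{eq2} and to prove part (ii) by a self-similar reduction to the single-letter case, followed by an explicit geometric-series computation. Part (i) is settled by substituting $x_0 = a(\go) = S_\go(\frac 47)$ into the first equation of \eqref{eq2}: the offset term vanishes and $\int_{J_\go}(x - a(\go))^2 \, dP = p_\go s_\go^2 V$ drops out at once.

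For part (ii), set $m := \go_{|\go|}$. The first step is to establish $a(\go, \infty) = S_{\go^-}(a(m, \infty))$: by \eqref{eq1}, $a(\go, \infty) = S_{\go^-(m+1)}(\frac 47) + \frac 87 s_{\go^-(m+1)}$, and since the similitude $S_{\go^-}$ of ratio $s_{\go^-}$ satisfies $S_{\go^-}(y + t) = S_{\go^-}(y) + s_{\go^-} t$, this factors as $S_{\go^-}\bigl(S_{m+1}(\frac 47) + \frac 87 s_{m+1}\bigr) = S_{\go^-}(a(m, \infty))$. Integrating over each cell $J_{\go^-(m+j)}$ via Lemma~\ref{lemma1} and using the scaling $S_{\go^-(m+j)}(x) - a(\go, \infty) = s_{\go^-}(S_{m+j}(x) - a(m, \infty))$ then reduces the problem to
\[ \int_{J_{(\go, \infty)}}(x - a(\go, \infty))^2 \, dP = p_{\go^-} s_{\go^-}^2 \int_{J_{(m, \infty)}}(x - a(m, \infty))^2 \, dP. \]

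It remains to evaluate the right-hand side for a single letter $m$. Applying the second equation of \eqref{eq2} with $x_0 = a(m, \infty) = 1 - \frac{8}{7 \cdot 2^m}$ (Lemma~\ref{lemma3}), and noting that $m+j \geq 2$ throughout so that every $p_{m+j} = 3/2^{m+j+1}$ follows the uniform geometric pattern, the integral collapses to a finite combination of $\sum 2^{-j}$, $\sum 4^{-j}$, $\sum 8^{-j}$, which with $V = 288/3577$ evaluates to a constant multiple of $1/8^m$. Re-expressing this as a multiple of $p_m s_m^2 V$ produces the case split: since $p_m s_m^2 = 3/8^{m+1}$ when $m \geq 2$ but $p_1 s_1^2 = 1/64$ (larger by a factor of $3$ than the extrapolated value), the same numerical quantity yields coefficient $\frac{43}{9}$ when $m \geq 2$ and $\frac{43}{3}$ when $m = 1$. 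Multiplying back by $p_{\go^-} s_{\go^-}^2$ and using $p_\go s_\go^2 = p_{\go^-} p_m s_{\go^-}^2 s_m^2$ finishes the proof. The main obstacle is just bookkeeping through the geometric sums; the dichotomy in the final coefficient is merely the cosmetic shadow of the fact that $p_1$ does not lie on the arithmetic pattern of $p_j$ for $j \geq 2$.
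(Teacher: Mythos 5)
Your approach is correct and essentially the same as the paper's: both substitute $x_0 = a(\go,\infty)$ into the second equation of \eqref{eq2} and reduce the series to a multiple of $p_\go s_\go^2 V$ via the scaling identities $s_{\go^-(\go_{|\go|}+j)} = s_\go 2^{-j}$ and $p_{\go^-(\go_{|\go|}+j)} = p_\go 2^{-j}$ (or $3p_\go 2^{-j}$ when $\go_{|\go|}=1$). You merely make the self-similar reduction $a(\go,\infty)=S_{\go^-}(a(m,\infty))$ and the factorization $\int_{J_{(\go,\infty)}}(\cdot)^2\,dP = p_{\go^-}s_{\go^-}^2\int_{J_{(m,\infty)}}(\cdot)^2\,dP$ a separate, explicit first step, whereas the paper folds the same scaling into the summand directly; the computation and the reason for the $m=1$ dichotomy are identical. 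Two small slips worth correcting: by Lemma~\ref{lemma3} applied with $k=m+1$ one gets $a(m,\infty)=1-\frac{8}{7\cdot 2^{m+1}}$, not $1-\frac{8}{7\cdot 2^m}$; and $p_1s_1^2=\frac{1}{64}$ is \emph{smaller} than the extrapolated $\frac{3}{8^{2}}=\frac{3}{64}$ by a factor of $3$, not larger — it is precisely this deficit that triples the coefficient from $\frac{43}{9}$ to $\frac{43}{3}$ when $m=1$. Neither slip affects the soundness of the argument.
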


\begin{proof} In the first equation of \eqref{eq2} put $x_0=a(\go)$, and then $\int_{J_{\go}} (x-a(\go))^2  dP=p_\go s_\go^2 V$. In the second equation of \eqref{eq2}, put $x_0=a(\go, \infty)$, and then
\begin{align} \label{eq36} & \int_{J_{(\go, \infty)}} (x -a(\go, \infty))^2  dP\\
&=\sum_{j=1}^\infty p_{\go^-(\go_{|\go|}+j)}s_{\go^-(\go_{|\go|}+j)}^2 V + \sum_{j=1}^\infty p_{\go^-(\go_{|\go|}+j)}\Big(S_{\go^-(\go_{|\go|}+j)}(\frac 47)-a(\go, \infty)\Big)^2.\notag
\end{align}
Putting the values of $a(\go, \infty)$ from \eqref{eq1} we have
\begin{align*}
&S_{\go^-(\go_{|\go|}+j)}(\frac 47)-a(\go, \infty)=S_{\go^-(\go_{|\go|}+j)}(\frac 47)-S_{\go^-(\go_{|\go|}+1)}(\frac 47)-\frac 87 s_{\go^-(\go_{|\go|}+1)}\\
&=s_{\go^-} \Big(S_{\go_{|\go|}+j}(\frac 47)-S_{\go_{|\go|}+1}(\frac 47)-\frac 87 s_{\go_{|\go|}+1}\Big)\\
&=s_{\go^-}\Big(\frac{1}{2^{\go_{|\go|+j+1}}} \frac 47-\frac{1}{2^{\go_{|\go|+j-1}}}-\frac{1}{2^{\go_{|\go|+1+1}}} \frac 47+\frac{1}{2^{\go_{|\go|+1-1}}}-\frac 87 s_{\go_{|\go|}+1}\Big)\\
&=s_{\go}\Big(\frac{1}{2^{j}} \frac 47-\frac{4}{2^{j}}- \frac 27+2-\frac 47\Big)=s_\go\Big(\frac{8}{7}-\frac{24}{7} \frac 1{ 2^j}\Big).
\end{align*}
Moreover, for any $j\geq 1$, $s_{\go^-(\go_{|\go|}+j)}=s_\go \frac 1{2^j}$; and $p_{\go^-(\go_{|\go|}+j)}= p_\go \frac 1{2^j} \te{ if } \go_{|\go|}\geq 2$, and $ p_{\go^-(\go_{|\go|}+j)}=p_\go \frac 3{2^j} \te{ if } \go_{|\go|}=1.$ Thus if $\go_{|\go|}\geq 2$, putting the corresponding values and making some simplification, we obtain
\begin{align*} &\sum_{j=1}^\infty p_{\go^-(\go_{|\go|}+j)}s_{\go^-(\go_{|\go|}+j)}^2 V=\frac 1 {7} p_\go s_\go^2V \te{ and } \\
 &\sum_{j=1}^\infty p_{\go^-(\go_{|\go|}+j)}\Big(S_{\go^-(\go_{|\go|}+j)}(\frac 47)-a(\go, \infty)\Big)^2=p_\go s_\go^2 \sum_{j=1}^\infty \frac 1 {2^j}\Big(\frac{8}{7}-\frac{24}{7} \frac 1{ 2^j}\Big)^2=p_\go s_\go^2V \frac{292}{63},
\end{align*}
and then \eqref{eq36} yields
$\int_{J_{(\go, \infty)}} (x -a(\go, \infty))^2  dP=\frac {43} 9 p_\go s_\go^2 V$. Similarly, if $\go_{|\go|}=1$, one can obtain $\int_{J_{(\go, \infty)}} (x -a(\go, \infty))^2  dP=\frac {43} 3 p_\go s_\go^2 V$. Thus, the lemma is yielded.
\end{proof}

\begin{notation}
For any $\go \in \D N^k$, $k\geq 1$, set
\begin{align}\label{eq43}
E(a(\go)):=\int_{J_{\go}}(x-a(\go))^2 dP \te{ and } E(a(\go, \infty)):=\int_{J_{(\go, \infty)}}(x-a(\go, \infty))^2 dP.
\end{align}
\end{notation}
Let us now prove the following lemma.

\begin{lemma} \label{lemma5}
For any two nonempty words $\go, \gt\in \D N^\ast$ if $p_\go=p_\gt$, then $s_\go=s_\gt$.
\end{lemma}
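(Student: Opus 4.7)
The plan is to reduce the claim to the unique factorization of rationals as powers of the coprime primes $2$ and $3$. First I would write each of $p_\omega$ and $s_\omega$ in a normal form that exposes which prime powers appear. For a nonempty word $\omega=\omega_1\cdots\omega_k$, set $a:=\#\{i:\omega_i=1\}$, $b:=\#\{i:\omega_i\geq 2\}=k-a$, and $S:=\sum_{i:\omega_i\geq 2}\omega_i$. Using $s_j=1/2^{j+1}$, $p_1=1/4$, and $p_j=3/2^{j+1}$ for $j\geq 2$, a direct calculation gives
\[
s_\omega=\prod_{i=1}^k\frac{1}{2^{\omega_i+1}}=\frac{1}{2^{k+\sum_i\omega_i}}=\frac{1}{2^{2a+b+S}},
\]
and
\[
p_\omega=\Big(\frac{1}{2^2}\Big)^{\!a}\prod_{i:\omega_i\geq 2}\frac{3}{2^{\omega_i+1}}=\frac{3^b}{2^{2a+b+S}}.
\]
In particular, this yields the clean relation $p_\omega=3^b\, s_\omega$.

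Next I would carry out the same bookkeeping for $\tau$, writing $a',b',S'$ for the analogous quantities, so that $p_\tau=3^{b'}/2^{2a'+b'+S'}$ and $s_\tau=1/2^{2a'+b'+S'}$. The hypothesis $p_\omega=p_\tau$ then becomes
\[
\frac{3^b}{2^{2a+b+S}}=\frac{3^{b'}}{2^{2a'+b'+S'}}.
\]
Since $\gcd(2,3)=1$, unique factorization of positive rationals into prime powers forces the exponent of $3$ and the exponent of $2$ to match on the two sides. Thus $b=b'$ and $2a+b+S=2a'+b'+S'$. The second equality is exactly what is needed to conclude $s_\omega=1/2^{2a+b+S}=1/2^{2a'+b'+S'}=s_\tau$.

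There is essentially no obstacle: the only mildly delicate point is to separate the role of the letter $1$ (which contributes a factor without any $3$) from the letters $\geq 2$ (each contributing one factor of $3$), and this is handled by the parameter $b$. Everything else is elementary arithmetic plus coprimality of $2$ and $3$. I would present the proof in two short steps: first derive the identity $p_\omega=3^{b(\omega)}s_\omega$ where $b(\omega)$ counts letters $\geq 2$, and then compare prime factorizations to deduce the conclusion.
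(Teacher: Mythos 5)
Your proof is correct and takes essentially the same route as the paper: both arguments write $p_\go=\frac{3^{c(\go)}}{2^{\go_1+\cdots+\go_k+k}}$ (your $b$ is the paper's $c(\go)$, and $2a+b+S=\go_1+\cdots+\go_k+k$), set the two expressions equal, and invoke coprimality of $2$ and $3$ to match exponents, which pins down $s_\go=s_\gt$. The only cosmetic difference is that you extract the tidy identity $p_\go=3^{b(\go)}s_\go$ as an intermediate step, whereas the paper works directly with the exponent sums.
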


\begin{proof} To prove the lemma, let us define a function $c$ as follows:
\[c: \D N^\ast\setminus \set{\es} \to \D N\uu\set{0}, \te{ such that } c(\go)=\te{card}(\set{\go_i : \go_i\neq 1, \, 1\leq i\leq |\go|}).\]
Let $\go, \gt \in \D N^\ast$ with $\go=\go_1\go_2\cdots \go_k$ and $\gt=\gt_1\gt_2\cdots \gt_m$ for some $k, m\geq 1$. Then, $p_\go=p_\gt$ implies
\[\frac{3^{c(\go)}}{2^{\go_1+\go_2+\cdots +\go_k+k}}=\frac{3^{c(\gt)}}{2^{\gt_1+\gt_2+\cdots +\gt_m+m}}\]
yielding $3^{c(\go)-c(\gt)}=2^{(\go_1+\go_2+\cdots +\go_k+k)-(\gt_1+\gt_2+\cdots +\gt_m+m)}$ and so, $c(\go)=c(\gt)$ and $\go_1+\go_2+\cdots +\go_k+k=\gt_1+\gt_2+\cdots +\gt_m+m$. Then,
\[s_\go=\frac{1}{2^{\go_1+\go_2+\cdots +\go_k+k}}=\frac{1}{2^{\gt_1+\gt_2+\cdots +\gt_m+m}}=s_\gt,\]
which is the lemma.
\end{proof}

In the next section we state and prove the main result of the paper.

\section{Main Result}
The following theorem gives the main result of the paper.

\begin{theorem} \label{Th1} For any $n\geq 2$, let $\ga_n:=\set{a(i) : 1\leq  i\leq n}$ be an optimal set of $n$-means, i.e., $\ga_n \in\C C_n:= \mathcal{C}_n(P)$. For $\go \in \D N^k$, $k\geq 1$, let $E(a(\go))$ and $E(a(\go, \infty))$ be defined by \eqref{eq43}. Set
\[\tilde  E(a(i)):=\left\{\begin{array} {ll}
E(a(\go)) \te{ if } a(i)=a(\go) \te{ for some }  \go \in \D N^\ast, \\
E(a(\go, \infty)) \te{ if } a(i)=a(\go, \infty) \te{ for some }  \go \in \D N^\ast,
\end{array} \right.
\]
and $W(\ga_n):=\set{a(j)  : a(j) \in \ga_n \te{ and } \tilde E(a(j))\geq \tilde E(a(i)) \te{ for all } 1\leq i\leq n}$. Take any $a(j) \in W(\ga_n)$, and write
\[\ga_{n+1}(a(j)):=\left\{\begin{array}{ll}
(\ga_n\setminus \set{a(j)})\uu \set{a(\go^-(\go_{|\go|}+1)), \, a(\go^-(\go_{|\go|}+1), \infty)} \te{ if } a(j)=a(\go, \infty), &\\
(\ga_n \setminus \set{a(j)})\uu \set{a(\go1), \, a(\go1, \infty)} \te{ if } a(j)=a(\go), &
\end{array}\right.
\]
Then, $\ga_{n+1}(a(j))$ is an optimal set of $(n+1)$-means, and the number
of such sets is given by
\[\te{card}\Big(\UU_{\ga_n \in \C{C}_n}\{\ga_{n+1}(a(j)) : a(j) \in W(\ga_n)\}\Big).\]
\end{theorem}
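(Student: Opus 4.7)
The plan is to prove the theorem by induction on $n$ (with base $n=2$ supplied by Lemma~\ref{lemma11}), establishing both halves in one stroke: that every optimal $(n+1)$-configuration arises from some $\ga_n\in\C C_n$ by the ``split-one-cell'' operation, and that the split reduces the quantization error by the largest amount precisely when it is performed at $a(j)\in W(\ga_n)$. The inductive hypothesis provides the structural fact that an optimal $n$-set has all its elements of the form $a(\go)$ or $a(\go,\infty)$.

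First, by Proposition~\ref{prop50} combined with Proposition~\ref{prop3}, any $\ga_n\in\C C_n$ partitions $\te{supp}(P)$ into exactly $n$ Voronoi cells, each of the form $J_\go$ or $J_{(\go,\infty)}$, with centroid $a(\go)$ or $a(\go,\infty)$, and with individual error contribution exactly $\tilde E(a(j))$. By Corollary~\ref{cor2}, the unique optimal two-means on $J_\go$ is $\set{a(\go 1),\, a(\go 1,\infty)}$ and the unique optimal two-means on $J_{(\go,\infty)}$ is $\set{a(\go^-(\go_{|\go|}+1)),\, a(\go^-(\go_{|\go|}+1),\infty)}$. Consequently, subdividing exactly one cell of $\ga_n$ by its optimal two-split yields precisely the set $\ga_{n+1}(a(j))$ appearing in the statement.

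Next, I would verify the converse: every $\ga_{n+1}\in\C C_{n+1}$ is such a refinement of some $\ga_n\in\C C_n$. By Proposition~\ref{prop50}, the $n+1$ elements of $\ga_{n+1}$ are again of the hierarchical form, and the structure of Proposition~\ref{prop3} forces some pair among them to be ``sisters'' in the sense of Corollary~\ref{cor2} (that is, a pair occupying a common parent cell $J_\go$ or $J_{(\go,\infty)}$). Merging such a sister pair produces an $n$-set $\widetilde{\ga_n}$; I would argue it is itself optimal by the following exchange argument: if a strictly better $n$-set existed, one could re-split its maximum-error cell by Corollary~\ref{cor2} and reduce the $(n+1)$-error below $V_{n+1}(\ga_{n+1})$, a contradiction. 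The main obstacle lies here—one must show the merging is unambiguous (at least one sister pair exists and can be chosen), which follows from the cell tree structure forced by Proposition~\ref{prop3}.

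Finally, writing $V(\ga_{n+1}(a(j)))=V_n-\tilde E(a(j))+E_\text{split}(a(j))$, where $E_\text{split}(a(j))$ is the sum of the two new error contributions computed via Lemma~\ref{lemma4}, the task reduces to maximizing $\tilde E(a(j))-E_\text{split}(a(j))$ over $a(j)\in\ga_n$. The four cases of Lemma~\ref{lemma16} are precisely the statement that $\tilde E(a(j_1))>\tilde E(a(j_2))$ if and only if splitting at $a(j_1)$ strictly beats splitting at $a(j_2)$; hence the maximizers are exactly the elements of $W(\ga_n)$. Thus $\ga_{n+1}(a(j))\in\C C_{n+1}$ iff $a(j)\in W(\ga_n)$, and the counting formula follows by taking the cardinality of the union over all $\ga_n\in\C C_n$ and all $a(j)\in W(\ga_n)$, completing the induction.
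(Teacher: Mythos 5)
Your proposal follows the same high-level outline as the paper's proof: establish the base case via Lemma~\ref{lemma11}, use the structural facts from Propositions~\ref{prop3} and~\ref{prop50} together with Corollary~\ref{cor2} to express every candidate optimal set in hierarchical form, and then invoke Lemma~\ref{lemma16} to identify the maximal-error cell as the one whose split minimizes the new quantization error. Where you go further than the paper is in trying to supply the ``converse'' step, namely that every optimal $(n+1)$-set arises by splitting one cell of an optimal $n$-set; the paper leaves this implicit (its proof concludes ``Hence $\ga_{n+1}(a(j))$ is an optimal set of $(n+1)$-means'' immediately after the Lemma~\ref{lemma16} comparison, without justifying that the optimum over \emph{all} $(n+1)$-configurations is attained by a single-split refinement of some $\ga_n$). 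Identifying this missing converse is a real contribution.

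However, your exchange argument for the converse does not close the gap. Write $\Delta$ for the error reduction achieved by un-merging the sister pair inside $\widetilde{\ga_n}$, so $V_{n+1}=V(\widetilde{\ga_n})-\Delta$, and write $\Delta^\ast$ for the error reduction from splitting the maximal-error cell of a genuinely optimal $\ga_n^\ast$. To reach the contradiction you want, you need $V_n-\Delta^\ast<V_{n+1}$, i.e.\ $\Delta-\Delta^\ast<V(\widetilde{\ga_n})-V_n$. But the optimality of $\ga_{n+1}$ already forces the \emph{opposite} inequality: since $V(\ga_n^\ast)-\Delta^\ast$ is the error of a legitimate $(n+1)$-point configuration, we get $V(\widetilde{\ga_n})-\Delta=V_{n+1}\le V_n-\Delta^\ast$, i.e.\ $\Delta-\Delta^\ast\ge V(\widetilde{\ga_n})-V_n$. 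So the hypothetical sub-optimality of $\widetilde{\ga_n}$ merely says that its merged cell carries a larger gain than the maximal cell of $\ga_n^\ast$, which is perfectly consistent. No contradiction follows from this comparison alone; a correct argument would need to exploit the recursive tree structure of Proposition~\ref{prop3} more carefully (e.g., an induction over the cell tree, or a Huffman-type exchange lemma comparing the \emph{sets} of leaf errors rather than a single split), not the naive split-versus-unsplit comparison you have written.

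One further point worth noting: the step in which you appeal to Lemma~\ref{lemma16} to say ``$\tilde E(a(j_1))>\tilde E(a(j_2))$ if and only if splitting at $a(j_1)$ strictly beats splitting at $a(j_2)$'' is what the paper also relies on, but the gain-to-error ratios for $J_\go$-type cells and $J_{(\go,\infty)}$-type cells differ ($73/96$ versus $73/86$), so this equivalence is not automatic from monotonicity; it holds only because the ratios $p_\go s_\go^2/p_\gt s_\gt^2$ that actually occur are of the form $3^a2^b$ and hence skip the interval where the two orderings would disagree. If you rely on Lemma~\ref{lemma16} as a black box that is fine, but an independent proof should be aware of why the equivalence is delicate.
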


\begin{remark}
Once an optimal set of $n$-means is known, by using \eqref{eq2} and Lemma~\ref{lemma4}, the corresponding quantization error can easily be calculated.
\end{remark}
To prove Theorem~\ref{Th1} we need some basic lemmas and propositions.

\begin{lemma}\label{lemma11} Let $\ga=\{a_1, a_2\}$ be an optimal set of two-means, $a_1<a_2$. Then, $a_1=a(1)=\frac 17$, $a_2=a(1, \infty)=\frac 57$ and the quantization error is $V_2=\frac{69}{3577}=0.0192899$.

\end{lemma}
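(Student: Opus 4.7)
The plan is first to produce $\alpha^{\ast}:=\{a(1),a(1,\infty)\}=\{1/7,5/7\}$ as a candidate and compute its distortion via the lemmas already at hand, then to show that no other 2-set achieves a smaller distortion. By \eqref{eq1}, $a(1)=S_{1}(4/7)=1/7$ and $a(1,\infty)=S_{2}(4/7)+\frac{8}{7}s_{2}=5/7$, so the midpoint of $\alpha^{\ast}$ is $3/7$, which lies in the open gap $(1/4,1/2)$ separating $J_{1}=[0,1/4]$ from $J_{(1,\infty)}\subset[1/2,1]$. Since this gap is disjoint from $\operatorname{supp}(P)$, the Voronoi cells of $1/7$ and $5/7$ coincide $P$-almost surely with $J_{1}$ and $J_{(1,\infty)}$, respectively. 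Applying Lemma~\ref{lemma4} with $\omega=1$ (so $\omega_{|\omega|}=1$), the distortion of $\alpha^{\ast}$ equals
\[
p_{1}s_{1}^{2}V+\frac{43}{3}\,p_{1}s_{1}^{2}V=\frac{46}{3}\cdot\frac{1}{4}\cdot\frac{1}{16}\cdot V=\frac{23}{96}\cdot\frac{288}{3577}=\frac{69}{3577},
\]
so $V_{2}\le 69/3577$.

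For the matching lower bound, let $\alpha=\{a_{1},a_{2}\}$ with $a_{1}<a_{2}$ be an arbitrary optimal set and set $m:=(a_{1}+a_{2})/2$. By Proposition~\ref{prop0}(iii), each $a_{i}$ is the conditional mean of $X$ on its Voronoi cell, so $a_{i}\in[0,1]$ and in particular $a_{i}=\mu_{i}(m)$, where $\mu_{1}(m):=E(X\mid X\le m)$ and $\mu_{2}(m):=E(X\mid X>m)$. The law of total variance then yields
\[
\int\min_{a\in\alpha}(x-a)^{2}\,dP=V-p(m)\bigl(1-p(m)\bigr)\bigl(\mu_{1}(m)-\mu_{2}(m)\bigr)^{2},
\]
with $p(m):=P((-\infty,m])$. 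Hence minimizing distortion is equivalent to maximizing $G(m):=p(m)(1-p(m))(\mu_{1}(m)-\mu_{2}(m))^{2}$, and $G$ is locally constant on every gap of $\operatorname{supp}(P)$.

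On the gap $(1/4,1/2)$ one has $p=1/4$, $\mu_{1}=1/7$ and $\mu_{2}=5/7$ by Lemma~\ref{lemma3}, so $G=3/49$ and $V-3/49=69/3577$, which matches the upper bound. It therefore remains to check $G(m)<3/49$ on every other gap of $\operatorname{supp}(P)$: the outer gaps $(\max J_{k},\min J_{k+1})$ for $k\ge 2$ can be handled by direct evaluation, since $p(m)=\sum_{j\le k}p_{j}=1-2^{-k+1}\cdot(\text{const})$ and the $\mu_{i}(m)$ are rational combinations of the $a(\omega)$'s via \eqref{eq1} and Lemma~\ref{lemma3}; the countably many sub-gaps contained in $J_{1}$ and in each $J_{k}$ reduce, by the self-similarity $P\circ S_{k}^{-1}$ appearing in the definition of $P$, to a scaled version of the same inequality. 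Once the maximum of $G$ is pinned to the single gap $(1/4,1/2)$, the centroid condition forces $a_{1}=a(1)=1/7$, $a_{2}=a(1,\infty)=5/7$, and $V_{2}=69/3577$.

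The delicate part will be the uniform verification $G(m)<3/49$ over the infinitely many gaps of $\operatorname{supp}(P)$. I expect the cleanest route to combine a direct computation on the first few outer gaps $(\max J_{k},\min J_{k+1})$ with a decay estimate exploiting $s_{k},p_{k}=\Theta(2^{-k})$ for the remaining $k$, and a self-similarity scaling argument handling all sub-gaps inside each $J_{k}$ in one stroke.
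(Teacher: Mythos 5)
Your reduction via the law of total variance is a genuinely different and attractive reformulation: writing the distortion for a boundary $m$ as $V-G(m)$ with $G(m)=p(m)(1-p(m))(\mu_1(m)-\mu_2(m))^2$, noting $G$ is locally constant on gaps of $\operatorname{supp}(P)$, and verifying $G=3/49$ on $(1/4,1/2)$ all check out (indeed $288/3577 - 3/49 = 69/3577$). The paper instead proceeds by a sequence of case-by-case contradiction arguments (pinning $a_2>5/8$, then $a_1\geq 1/7$, then $a_1<7/16$, then $\tfrac12(a_1+a_2)\leq \tfrac12$) without introducing $G$; your framework is conceptually cleaner and would, if completed, yield the statement and uniqueness in one stroke.

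However, the proposal has a genuine gap exactly where you flag ``the delicate part.'' You never establish $G(m)<3/49$ on the remaining gaps, and the self-similarity reduction you sketch for the sub-gaps does not work as stated. The issue: if $m$ lies in a sub-gap of $J_1$ (say), then $\mu_1(m)$ is a conditional mean inside $J_1$, but $\mu_2(m)$ averages the remaining mass of $J_1$ \emph{together with} all the mass in $J_{(1,\infty)}$. This mixing means $G(m)$ is not a rescaled copy of $G$ at the corresponding gap of $P$ itself; the similarity $S_1$ conjugates $P|_{J_1}$ to $P$, but it does not act on the full splitting $(\mu_1(m),\mu_2(m))$ that $G$ sees. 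The same objection applies to sub-gaps of each $J_k$. So ``a scaled version of the same inequality'' does not follow from one application of self-similarity, and an actual uniform bound on $G$ over the infinitely many gaps is missing. Without that verification (or a replacement argument, e.g.\ the paper's direct exclusion estimates or a monotonicity/decay bound on $G$ near the endpoints of each gap), the lower bound $V_2\geq 69/3577$ and the identification $a_1=1/7,\ a_2=5/7$ are not proved.

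Two further small points worth making explicit if you pursue this route: (a) you should justify that $\sup_m G(m)$ is attained and that it suffices to consider $m$ in gaps---$G$ extends continuously to all of $[0,1]$ and the union of the gaps is dense, so this is fine but deserves a sentence; (b) uniqueness of the optimal set follows because $\mu_1,\mu_2$ are constant on the maximizing gap, but you should say that no other gap attains the same maximum.
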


\begin{proof} Let us first consider the two-point set $\beta$ given by $\gb=\{\frac 17, \frac 57\}$. Since $S_1(1) <\frac 12 (\frac 1 7+\frac 57)<S_2(0)$, by Lemma~\ref{lemma4}, we have
\begin{align*}
&\int \min_{b \in \gb}(x-b)^2  dP=\int_{J_1} (x-\frac 17)^2 dP+\int_{J_{(1, \infty)}} (x-\frac 57)^2 dP\\
&=p_1s_1^2  (1+\frac{43}{3}) V=\frac{69}{3577}=0.0192899.
\end{align*}
Since $V_2$ is the quantization error for two-means, we have $V_2\leq 0.0192899$.
Let $\ga=\{a_1, a_2\}$ be an optimal set of two-means, $a_1<a_2$. Since $a_1$ and $a_2$ are the centroids of their own Voronoi regions, we have $0< a_1<a_2< 1$. Suppose that $a_2\leq \frac 58$. Then,
\[V_2\geq \int_{J_3\uu J_4\uu J_5\uu J_6}(x-\frac 58)^2 dP=\frac{647055}{33488896}=0.0193215>V_2,\]
which leads to a contradiction. So, we can assume that $a_2>\frac 58$ implying $\frac 12(a_1+a_2)\geq \frac 12(0+\frac 58)=\frac 5{16}>\frac 14$. Thus, we see that the Voronoi region of $a_2$ does not contain any point from $J_1$, and  $a_1\geq a(1)=\frac 17$.
Suppose that $a_1\geq \frac 7{16}$.
Then, using \eqref{eq2}, we have
\begin{align*}
V_2 \geq \int_{J_1}(x-a_1)^2 dP\geq \int_{J_1}(x-\frac 7{16})^2 dP = p_1 \Big(s_1^2 V+(S_1(\frac 47)-\frac 7{16})^2\Big)=\frac{12015}{523264}=0.0229616>V_2
\end{align*}
which is a contradiction, and so $\frac 17\leq a_1<\frac 7{16}$.
We now show that $\frac 12(a_1+a_2)\leq \frac 12$. For the sake of contradiction assume that $\frac 12(a_1+a_2)>\frac 12$. Then, if $\frac 12(a_1+a_2)\geq \frac 58$, we have
$a_1\geq E(X : X \in J_1\uu J_2)=\frac 25,$
yielding \[V_2\geq \int_{J_1\uu J_2}(x-\frac 25)^2 dP=\frac{171}{5840}=0.0292808>V_2,\] which is a contradiction. Next, assume that $S_{2\gs1}(1)\leq \frac 12(a_1+a_2)\leq S_{2\gs2}(0)$ for some $\gs \in \D N^\ast$. For definiteness sake, take $\gs=1$, and so $S_{211}(1)\leq \frac 12(a_1+a_2)\leq S_{212}(0)$. Then, $a_1=E(X : X\in J_1\uu J_{211})$ and $a_2=E(X : X\in J_{(211, \infty)}\uu J_{(21, \infty)}\uu J_{(2, \infty)})$ yielding
\begin{align*}
a_1&=\frac{P(J_1)S_{1}(\frac 47)+P(J_{211})S_{211}(\frac 47)}{P(J_1)+P(J_{211})}=\frac{1363}{7840},
\end{align*} \te{ and }
\begin{align*}
a_2&=\frac{p_{(211, \infty)}a(211, \infty)+p_{(21, \infty)}a(21, \infty)+p_{(2, \infty)} a(2, \infty)}{p_{(211, \infty)}+p_{(21, \infty)}+p_{(2, \infty)}}=\frac{5007}{6944},
\end{align*}
where $p_{(211, \infty)}=p_{21}-p_{211}$, $p_{(21, \infty)}=p_2-p_{21}$, $p_{(2, \infty)}=1-p_1-p_2$,  $a(211, \infty)=S_{212}(\frac 47)+\frac 87 s_{212}$, $a(21, \infty)=S_{22}(\frac 47)+\frac 87 s_{22}$, and $a(2, \infty)=S_3(\frac 47)+\frac 87 s_3$.
Thus,
\[V_2\geq \int_{J_1\uu J_{211}}(x-\frac{1363}{7840})^2dP +\int_{A}(x-\frac{5007}{6944})^2 dP=\frac{648995235322779}{32296112614277120}=0.0200952>V_2,\]
where $A=J_{212}\uu J_{213}\uu J_{22}\uu J_{23}\uu J_{24}\uu J_{25}\uu J_3\uu J_4\uu J_5\uu J_6\uu J_7\uu J_8\uu J_9\uu J_{10}$, which gives a contradiction. Similarly, we can show that for any other choice of $\gs\in \D N^\ast$, the assumption $\frac 12(a_1+a_2)>\frac 12$ will give a contradiction. Thus, we have $\frac 12(a_1+a_2)\leq \frac 12$ implying $a_1\leq a(1)=\frac 17$. Again, we have seen $a_1\geq \frac 17$. Thus, we deduce that $a_1=\frac 17$ and the Voronoi region of $a_2$ does not contain any point from $J_1$, i.e.,  $a_2=a(1, \infty)=\frac 57$, and the corresponding quantization error is $V_2=\frac{69}{3577}=0.0192899$. This completes the proof of the lemma.
\end{proof}

Using the technique of Lemma~\ref{lemma11}, the following corollary can be proved.

\begin{cor} \label{cor2}
For any $\go\in \D N^\ast$,  the set $\set{a(\go1), a(\go1, \infty)}$ forms a unique optimal set two-means for the conditional measure of $P$ on $J_\go$, and the set $\set{a(\go^-(\go_{|\go|}+1)), a(\go^-(\go_{|\go|}+1), \infty)}$ forms a unique optimal set of two-means for the conditional measure of $P$ on $J_{(\go, \infty)}$.
\end{cor}

\begin{figure}
\begin{tikzpicture}[line cap=round,line join=round,>=triangle 45,x=1.0cm,y=1.0cm]
\clip(-0.3,3.3) rectangle (16.3,4.7);
\draw [dotted] (0.,4.)-- (16.,4.);
\draw (0.,4.)-- (4.,4.);
\draw (8.,4.)-- (10.,4.);
\draw (12.,4.)-- (13.,4.);
\draw (14.,4.)-- (14.5,4.);
\draw (3.7556515807279486,4.056645955700061) node[anchor=north west] {$\frac 14$};
\draw (7.754529803796151,4.056645955700061) node[anchor=north west] {$\frac 12$};
\draw (9.779528589640193,4.056645955700061) node[anchor=north west] {$\frac 58$};
\draw (11.724154259277608,4.056645955700061) node[anchor=north west] {$\frac 34$};
\draw (12.690907419786374,4.056645955700061) node[anchor=north west] {$\frac{13}{16}$};
\draw (13.737660580295138,4.056645955700061) node[anchor=north west] {$\frac{7}{8}$};
\draw (14.20919011391112,4.056645955700061) node[anchor=north west] {$\frac{29}{32}$};
\draw (8.903894777751309,4.814854512245864) node[anchor=north west] {$\frac 47$};
\draw (2.059570515053495,4.814854512245864) node[anchor=north west] {$\frac 17$};
\draw (13.459489092538806,4.79966795414255) node[anchor=north west] {$\frac 67$};
\draw (11.176691935145057,4.7844813960392365) node[anchor=north west] {$\frac 57$};
\draw (12.209751002377019,4.830041070349178) node[anchor=north west] {$\frac{11}{14}$};
\draw (14.512921275977395,4.814854512245864) node[anchor=north west] {$\frac{13}{14}$};
\draw (0.23755665886247314,4.79966795414255) node[anchor=north west] {$\frac{1}{28}$};
\draw (2.535540374359535,4.79966795414255) node[anchor=north west] {$\frac{5}{28}$};
\draw (-0.25322664234025296,4.056645955700061) node[anchor=north west] {$0$};
\draw (15.7803032482345808,4.056645955700061) node[anchor=north west] {$1$};
\begin{scriptsize}
\draw [fill=qqqqff] (0.,4.) circle (1.0pt);
\draw [fill=qqqqff] (16.,4.) circle (1.0pt);
\draw [fill=ffqqqq] (9.142857142857142,4.) circle (2.5pt);
\draw [fill=ffqqqq] (2.2857142857142856,4.) circle (2.5pt);
\draw [fill=ffqqqq] (11.428571428571427,4.) circle (2.5pt);
\draw [fill=ffqqqq] (13.714285714285714,4.) circle (2.5pt);
\draw [fill=qqqqff] (12.571428571428571,4.) circle (2.5pt);
\draw [fill=qqqqff] (14.857142857142856,4.) circle (2.5pt);
\draw [color=qqqqff] (0.5714285714285714,4.)-- ++(-2.5pt,-2.5pt) -- ++(5.0pt,5.0pt) ++(-5.0pt,0) -- ++(5.0pt,-5.0pt);
\draw [color=qqqqff] (2.8571428571428568,4.)-- ++(-2.5pt,-2.5pt) -- ++(5.0pt,5.0pt) ++(-5.0pt,0) -- ++(5.0pt,-5.0pt);
\end{scriptsize}
\end{tikzpicture}
\caption{Optimal sets: of one-mean is $\set{\frac 47}$; of two-means is $\set{\frac 17, \frac 57}$; of three-means is $\set{\frac 17, \frac 47, \frac 67}$; of four-means is $\set{\frac 17, \frac 47, \frac{11}{14}, \frac{13}{14}}$; of five-means is $\set{\frac 1{28}, \frac 5{28}, \frac 47, \frac{11}{14}, \frac{13}{14}}$.}  \label{Fig1}
\end{figure}
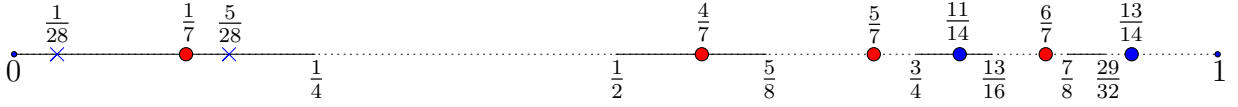

\begin{lemma} \label{lemma12}
Let $\ga$ be an optimal set of three-means. Then, $\ga=\set{a(1), a(2), a(2, \infty)}=\set{\frac 1{7}, \frac 4{7}, \frac 67}$ and the quantization error is $V_3=\frac{57}{14308}=0.00398379$.
\end{lemma}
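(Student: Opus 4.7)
The plan is to mirror the structure of Lemma~\ref{lemma11}: produce a concrete candidate set of three points whose distortion matches the claimed $V_3$, and then, for an arbitrary optimal $\alpha=\set{a_1<a_2<a_3}$, locate the midpoint $m_1:=\tfrac{a_1+a_2}{2}$ so that the candidate is forced. For the upper bound I would test $\beta=\set{a(1),a(2),a(2,\infty)}=\set{\tfrac17,\tfrac47,\tfrac67}$. The two midpoints $\tfrac12(a(1)+a(2))=\tfrac{5}{14}$ and $\tfrac12(a(2)+a(2,\infty))=\tfrac{5}{7}$ fall respectively in the support-free gaps $(\tfrac14,\tfrac12)$ and $(\tfrac58,\tfrac34)$, so the Voronoi partition of $\beta$ coincides (up to $P$-null sets) with $(J_1,J_2,J_{(2,\infty)})$. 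Lemma~\ref{lemma4} then gives
\[\int\min_{b\in\beta}(x-b)^2\,dP=p_1s_1^2V+p_2s_2^2V+\tfrac{43}{9}p_2s_2^2V=\tfrac{19}{384}V=\tfrac{57}{14308},\]
so $V_3\le\tfrac{57}{14308}$.

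Next, for an optimal $\alpha$, I would show $m_1\in[\tfrac14,\tfrac12]$ by ruling out the two extremes. If $m_1>\tfrac12$, the Voronoi region of $a_1$ contains $J_1\cup J_2$, hence by Proposition~\ref{prop0}(iii), $a_1\ge E(X\mid X\in J_1\cup J_2)=\tfrac{p_1a(1)+p_2a(2)}{p_1+p_2}=\tfrac25$. Then for every $x\in J_1$, $(x-a_1)^2\ge(\tfrac25-\tfrac14)^2=\tfrac{9}{400}$, yielding $V_3\ge p_1\cdot\tfrac{9}{400}=\tfrac{9}{1600}>\tfrac{57}{14308}$, a contradiction. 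If $m_1<\tfrac14$, I split on where $a_2$ lies: if $a_2\le\tfrac14$ then only $a_3$ serves $J_{(1,\infty)}$, forcing distortion $\ge E(a(1,\infty))=\tfrac{43}{192}V=\tfrac{129}{7154}$, much larger than the bound from Step~1; if $a_2\in(\tfrac14,\tfrac12)$ the argument mimics the nested $\sigma$-refinement at the end of Lemma~\ref{lemma11}, locating $\tfrac{a_2+a_3}{2}$ inside one of the gaps $(S_{k}(1),S_{k+1}(0))$ and producing an explicit lower bound exceeding $\tfrac{57}{14308}$ via Lemma~\ref{lemma4}.

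Once $m_1\in[\tfrac14,\tfrac12]$, the Voronoi region of $a_1$ captures $P$-mass exactly $p_1=P(J_1)$ (the gap $(\tfrac14,\tfrac12)$ carries no mass), so Proposition~\ref{prop0}(iii) gives $a_1=E(X\mid X\in J_1)=a(1)=\tfrac17$. By optimality, $\set{a_2,a_3}$ must then form an optimal set of two-means for the conditional measure $P(\cdot\mid J_{(1,\infty)})$, and Corollary~\ref{cor2} (applied with $\omega=1$, giving the $J_{(\omega,\infty)}$ case with $\omega^-(\omega_{|\omega|}+1)=2$) identifies this uniquely as $\set{a(2),a(2,\infty)}=\set{\tfrac47,\tfrac67}$. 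Combining with Step~1 confirms $V_3=\tfrac{57}{14308}$. The principal obstacle is the delicate second sub-case of Step~2, where $m_1<\tfrac14$ but $a_2$ lies in the gap $(\tfrac14,\tfrac12)$; since $a_2$ itself must be a centroid of a subset of $J_1\cup J_{(1,\infty)}$ with positive $P$-mass, a careful enumeration of the possible landings of the second midpoint $\tfrac{a_2+a_3}{2}$ among the boundaries $S_k(1),S_{k+1}(0)$ is required to force the contradiction cleanly.
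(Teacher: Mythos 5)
Your overall strategy is genuinely different from the paper's: you try to pin down only the midpoint $m_1=\tfrac12(a_1+a_2)$, conclude $a_1=a(1)$, and then invoke Corollary~\ref{cor2} to finish $\set{a_2,a_3}$ via the two-means problem on $J_{(1,\infty)}$. The paper instead bounds $a_1,a_2,a_3$ individually through a long sequence of explicit distortion estimates and does not appeal to Corollary~\ref{cor2} at all. Your reduction would indeed be cleaner, and Steps~1 and~3 are sound. However, there is a real gap in Step~2.

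The claim ``if $m_1>\tfrac12$, the Voronoi region of $a_1$ contains $J_1\cup J_2$'' is false. The cell $M(a_1\mid\ga)$ is $[0,m_1]$, which contains all of $J_2=[\tfrac12,\tfrac58]$ only when $m_1\geq \tfrac58$. For $m_1\in(\tfrac12,\tfrac58)$ only a left portion of $J_2$ lies in the cell, so $a_1=E(X\mid X\leq m_1)$ can be far below $\tfrac25$. For instance, if $m_1\in(S_{21}(1),S_{22}(0))=(\tfrac{17}{32},\tfrac{9}{16})$, then $M(a_1\mid\ga)$ carries exactly the mass of $J_1\cup J_{21}$ and
\[
a_1=\frac{p_1a(1)+p_{21}a(21)}{p_1+p_{21}}=\frac{151}{616}\approx 0.245,
\]
so your estimate $(x-a_1)^2\geq(\tfrac25-\tfrac14)^2$ on $J_1$ does not apply and the contradiction you derive evaporates. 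Ruling out the whole band $m_1\in(\tfrac12,\tfrac58)$ requires precisely the word-by-word $\sigma$-refinement of the midpoint location that you only mention for the $m_1<\tfrac14$, $a_2\in(\tfrac14,\tfrac12)$ sub-case (and which the paper actually carries out, in its Lemma~\ref{lemma11}). So Step~2 is incomplete on both ends, not just one: you acknowledge the $m_1<\tfrac14$ sub-case as unresolved, but the $m_1\in(\tfrac12,\tfrac58)$ case is silently assumed away. A smaller imprecision of the same flavor occurs in the $a_2\leq\tfrac14$ branch: the Voronoi cell of $a_2$ can extend past $\tfrac12$ and serve part of $J_{(1,\infty)}$, so ``only $a_3$ serves $J_{(1,\infty)}$'' needs an additional sentence (the lower bound still goes through because $a_2\leq\tfrac14$ is uniformly far from $[\tfrac12,1]$, but that must be said). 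Once these cases are actually carried out, your reduction to Corollary~\ref{cor2} is a valid and arguably more conceptual alternative to the paper's direct grind.
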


\begin{proof} Let us first consider a three-point set $\gb$ given by $\gb:=\set{\frac 1{7}, \frac 4{7}, \frac 67}$. Since $J_{1}\sci M(\frac 1{7}|\gb)$, $J_{2}\sci M(\frac 4{7}|\gb)$ and $J_{(2, \infty)}\sci M(\frac 67|\gb)$, by Lemma~\ref{lemma4}, we have
\begin{align*}
&\int \min_{b \in \gb}(x-b)^2  dP=\int_{J_{1}} (x-\frac 1{7})^2 dP +\int_{J_2}(x-\frac 47)^2dP+\int_{J_{(2, \infty)}}(x-\frac 67)^2dP\\
&=p_1s_1^2 V +p_2s_2^2V(1+\frac{43}{9})=\frac{57}{14308}=0.00398379.
\end{align*}
Since $V_3$ is the quantization error for three-means, we have $V_3\leq\frac{57}{14308}=0.00398379$.
Let $\ga$ be an optimal set of three-means with $\ga=\set{a_1, a_2, a_3}$, where $a_1<a_2<a_3$. Since the optimal points are the centroids of their own Voronoi regions, we have  $0< a_1<a_2<a_3< 1$. If $a_1> \frac 14$, then
\[V_3\geq\int_{J_1}(x-a_1)^2 dP\geq \int_{J_1}(x-\frac{1}{4})^2 dP=\frac{135}{32704}= 0.00412794> V_3,\]
which gives a contradiction, and so $a_1\leq  \frac 14$. If $a_3< \frac{25}{32}=S_{32}(0)$, using \eqref{eq2}, we see that
\[V_3\geq \int_{J_{32}\uu J_{33}\uu \mathop{\uu}\limits_{j=4}^8 J_j }(x-\frac{25}{32})^2dP=\frac{8764935}{2143289344}=0.00408948>V_3,\]
which leads to a contradiction, and so $\frac{25}{32}\leq a_3$. Suppose that $a_2\leq \frac 12-\frac 1{32}=\frac{15}{32}$. Then, as $\frac 12(\frac{15}{32}+\frac{25}{32})=\frac 58=S_2(1)$, we have
\[V_3\geq \int_{J_2}(x-\frac{15}{32} )^2 dP=\frac{18525}{4186112}=0.00442535>V_3,\]
which is a contradiction. Assume that $\frac{15}{32}\leq a_2<\frac 12$. Then, $\frac 12(a_1+a_2)<\frac 14$ implying $a_1\leq \frac  12-a_2\leq \frac 12-\frac{15}{32}=\frac 1{32}<\frac 4{32}=S_{12}(0)$. Again $\frac 12(\frac 12+\frac{25}{32})=\frac {41}{32}>\frac 58=S_{2}(1)$. Thus, we have
\[V_3\geq \int_{J_{12}\uu J_{13}}(x-\frac 1{32})^2 dP+\int_{J_{2}}(x-\frac 12)^2dP=\frac{162087}{33488896}=0.00484002>V_3,\]
which is a contradiction. So, we can assume that $\frac 12\leq a_2$. Suppose that $\frac 58+\frac 1{32}=\frac {21}{32}\leq a_2$. Then, as $\frac 14<\frac 12 (a(1)+\frac{21}{32})<\frac 12$, we have
\[V_3\geq \int_{J_1}(x-a(1))^2 dP+\int_{J_2}(x-\frac {21}{32})^2 dP=\frac{129747}{29302784}=0.0044278>V_3,\]
which yields a contradiction. Next, suppose that $\frac 58< a_2\leq \frac 58+\frac 1{32}=\frac{21}{32}$. Then, $\frac 14<\frac 12 (a(1)+\frac{5}{8})<\frac 12$. Moreover, $\frac 12(a_2+a_3)>\frac 34$ implying $a_3>\frac 32-a_2\geq \frac 32-\frac {21}{32}=\frac {27}{32}>\frac{13}{16}=S_3(1)$ leading to the following two cases:

Case~A. $\frac {27}{32}< a_3\leq \frac{113}{128}=S_{41}(1)$.

Then, $\frac{1}{2} (\frac{21}{32}+\frac{27}{32})=\frac 34=S_{3}(0)$, and so
\begin{align*} V_3&\geq \int_{J_1}(x-a(1))^2 dP+\int_{J_2}(x-\frac 58)^2 dP+\int_{J_3}(x-\frac {27}{32})^2 dP+\int_{J_5\uu J_6\uu J_7}(x-\frac{113}{128})^2 dP\\
&=\frac{60087981}{15003025408}=0.00400506>V_3,
\end{align*}
which gives a contradiction.

Case~B. $S_{41}(1)=\frac{113}{128}\leq a_3$.

Then, $S_{31}(1)<\frac 12(\frac {21}{32}+\frac{113}{128})<S_{32}(0)$, and so
\begin{align*} V_3&\geq \int_{J_1}(x-a(1))^2 dP+\int_{J_2}(x-\frac 58)^2 dP+\int_{J_{31}}(x-\frac {21}{32})^2 dP+\int_{J_{32}\uu J_{33}}(x-\frac{113}{128})^2 dP\\
&=\frac{63174099}{15003025408}=0.00421076>V_3,
\end{align*}
which leads to a contradiction.

Therefore, $\frac 12\leq a_2\leq \frac 58$. Suppose that $S_{23}(0)=\frac {19}{32}\leq a_2\leq \frac 58$. Then, the Voronoi region of $a_2$ does not contain any point from $J_1$, and $\frac 12(a_2+a_3)>\frac 34$ implying $a_3>\frac 32-a_2\geq \frac 32-\frac 58=\frac 78$, otherwise the quantization error can strictly be reduced by moving the point $a_2$ to $a(2)=\frac 47$. Thus, we have
\begin{align*} &\min_{\frac {19}{32}\leq a_2\leq \frac 58} \int_{J_2}(x-a_2)^2dP=p_2\Big(s_2^2V+\min_{\frac {19}{32}\leq a_2\leq \frac 58}(S_2(\frac 47)-a_2)^2\Big)\\
&=p_2 \Big(s_2^2V+(a(2)-\frac{19}{32})^2\Big)=\frac{2757}{4186112}.
\end{align*}
The following two cases can arise:

Case~I. $\frac 78<a_3\leq S_{42}(0)=\frac{57}{64}$.

Then, $\frac{1}{2} (\frac 58+\frac 78)=\frac 34=S_{3}(0)$. Write $A:=J_{42}\uu J_{43}\uu \mathop{\uu}\limits_{j=5}^{10}J_j$, and so
\begin{align*} V_3&\geq \int_{J_1}(x-a(1))^2 dP+\min_{\frac {19}{32}\leq a_2\leq \frac 58} \int_{J_2}(x-a_2)^2dP+\int_{J_3}(x-\frac 78)^2dP +\int_A(x-\frac{57}{64})^2dP\\
&=\frac{3839362137}{960193626112}=0.00399853>V_3,
\end{align*}
which gives a contradiction.

Case~II. $S_{42}(0)=\frac{57}{64}<a_3$.

Then, $S_{311}(1)=\frac{193}{256}<\frac 12(\frac 58+\frac{57}{64})=\frac{97}{128}=S_{312}(0)$. Write $A:=\mathop{\uu}\limits_{j=2}^{10} J_{31j}\uu \mathop{\uu}\limits_{j=2}^{10} J_{3j}\uu J_{41}$. Thus,
\begin{align*} V_3&\geq \int_{J_1}(x-a(1))^2 dP+\min_{\frac {19}{32}\leq a_2\leq \frac 58} \int_{J_2}(x-a_2)^2dP+\int_{J_{311}}(x-\frac 58)^2dP+\int_A(x-\frac{57}{64})^2dP\\
& =\frac{1008051842887707}{251708997923504128}=0.00400483>V_3,
\end{align*}
which leads to a contradiction.

Therefore, we can assume that $\frac 12\leq a_2\leq \frac {19}{32}=S_{23}(0)$. Again, we have seen that $\frac {25}{32}\leq a_3\leq 1$. Then, notice that the Voronoi region of $a_2$ does not contain any point from $J_1$. Moreover, $\frac {41}{64}=\frac 12(\frac 12+\frac {25}{32})\leq \frac 12(a_2+a_3)\leq \frac 12(\frac{19}{32}+1)=\frac {51}{64}$ implying that the Voronoi region of $a_3$ does not contain any point from $J_2$. Suppose that the Voronoi region of $a_2$ contains points from $J_{(2, \infty)}$. Then, $\frac 12(a_2+a_3)>\frac 34$, which implies $a_3>\frac 32-a_2\geq \frac 32-\frac {19}{32}=\frac{29}{32}=S_{4}(1)$. Moreover,
\[\min_{\frac 12\leq a_2\leq \frac {19}{32}} \int_{J_2}(x-a_2)^2 dP=\int_{J_2}(x-a(2))^2 dP=p_2s_2^2V.\]
Thus, we see that
\begin{align*} V_3&\geq \int_{J_1}(x-a(1))^2 dP+p_2s_2^2V+\int_{J_3\uu J_4}(x-\frac{29}{32})^2dP=\frac{531801}{117211136}=0.00453712>V_3,
\end{align*}
which gives a contradiction. Therefore, we can assume that the Voronoi region of $a_2$ does not contain any point from $J_{(2,\infty)}$. Thus, we have proved that $J_1\sci M(a_1|\ga)$, $J_2\sci M(a_2|\ga)$, and $J_3\sci M(a_3|\ga)$ yielding $a_1=a(1)=\frac 17$, $a_2=a(2)=\frac 47$, and $a_3=a(2, \infty)=\frac 67$, and the corresponding quantization error is $V_3=\frac{57}{14308}=0.00398379$  (see Figure~\ref{Fig1}). Thus, the proof of the lemma is complete.
\end{proof}

We need the following two lemmas to prove Proposition~\ref{prop1}.

\begin{lemma} \label{lemma13}
Let $\ga_4$ be an optimal set of four-means. Then, $\ga_4\ii J_1\neq \es$ and $\ga_4\ii J_{(1, \infty)}\neq \es$, and $\ga_4$ does not contain any point from the open interval $(\frac 14, \frac 12)$. Moreover, the Voronoi region of any point in $\ga_4\ii J_1$ does not contain any point from $J_{(1, \infty)}$ and the Voronoi region of any point in $\ga_4\ii J_{(1, \infty)}$ does not contain any point from $J_1$.
\end{lemma}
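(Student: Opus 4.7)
The plan follows the template of Lemma~\ref{lemma11} and Lemma~\ref{lemma12}: establish an explicit upper bound on $V_4$ via a trial four-point set, then rule out each unwanted configuration by producing a contradictory lower bound, using Proposition~\ref{prop0}(iii), Lemma~\ref{lemma4}, and the centroid/integral formulas \eqref{eq1}--\eqref{eq2}. For the upper bound I would take $\gb=\set{a(1),a(2),a(3),a(3,\infty)}=\set{\tfrac 17,\tfrac 47,\tfrac{11}{14},\tfrac{13}{14}}$. Checking that the three consecutive midpoints $\tfrac 5{14},\tfrac{19}{28},\tfrac 67$ fall respectively in the gaps $(S_1(1),S_2(0))$, $(S_2(1),S_3(0))$, $(S_3(1),S_4(0))$ of $\operatorname{supp}(P)$, Lemma~\ref{lemma4} yields
\[V_4 \leq \bigl(p_1 s_1^2+p_2 s_2^2+p_3 s_3^2+\tfrac{43}{9}\,p_3 s_3^2\bigr)V=\tfrac{79}{3072}V.\]

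Write $\ga_4=\set{a_1<a_2<a_3<a_4}$. If $a_1\geq \tfrac 14$, then $V_4\geq \int_{J_1}(x-\tfrac 14)^2\,dP=p_1\bigl(s_1^2V+(a(1)-\tfrac 14)^2\bigr)$, which a direct computation shows strictly exceeds $\tfrac{79}{3072}V$; hence $a_1<\tfrac 14$, so $\ga_4\ii J_1\neq\es$. If $a_4<\tfrac 12$, then $M(a_4|\ga_4)=[c,\infty)$ with $c\leq a_4<\tfrac 12$ contains all of $[\tfrac 12,1]$, so by Proposition~\ref{prop0}(iii) together with the monotonicity of $c\mapsto E(X|X\in[c,1])$ (which takes value $\tfrac 47$ at $c=0$) one obtains $a_4\geq \tfrac 47>\tfrac 12$, a contradiction; hence $\ga_4\ii J_{(1,\infty)}\neq\es$.

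Next, suppose $a_i\in(\tfrac 14,\tfrac 12)$ for some $i$; by the above $i\in\set{2,3}$. Since $\operatorname{supp}(P)\ii(\tfrac 14,\tfrac 12)=\es$, the $P$-mass of $M(a_i|\ga_4)$ lies entirely in $J_1\uu J_{(1,\infty)}$. If this mass is all in $J_1$ (resp.\ all in $J_{(1,\infty)}$), Proposition~\ref{prop0}(iii) forces $a_i\leq \tfrac 14$ (resp.\ $a_i\geq \tfrac 12$), a contradiction. Otherwise $M(a_i|\ga_4)$ straddles the gap, which forces $a_{i-1}\leq \tfrac 12-a_i<\tfrac 14$ and $a_{i+1}\geq 1-a_i>\tfrac 12$; I would then partition $\operatorname{supp}(P)$ according to the four resulting Voronoi cells, bound each contribution to the distortion via \eqref{eq2}, and verify arithmetically that the total lower bound on $V_4$ exceeds $\tfrac{79}{3072}V$, a contradiction.

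Finally, let $a_k$ be the largest point of $\ga_4$ in $J_1$ and $a_{k+1}$ the smallest in $J_{(1,\infty)}$, and set $m=\tfrac 12(a_k+a_{k+1})$. The non-mixing claim is equivalent to $m\in(\tfrac 14,\tfrac 12)$. The inequality $m>\tfrac 14$ is automatic: $a_{k+1}\geq \tfrac 12$ and $a_k>0$ (the latter because $a_k=E(X|X\in M(a_k|\ga_4))>0$ since $P$ is non-atomic). If $m\geq \tfrac 12$, then $a_{k+1}\geq 1-a_k\geq \tfrac 34$, so every point of $\ga_4$ in $J_{(1,\infty)}$ is at least $\tfrac 34$; for every $x\in J_2=[\tfrac 12,\tfrac 58]$ the nearest optimal point is at distance $\geq \min(x-\tfrac 14,\tfrac 34-x)\geq \tfrac 18$, so the contribution of $J_2$ alone to $V_4$ is $\geq (\tfrac 18)^2 p_2=\tfrac{3}{512}$, which exceeds $\tfrac{79}{3072}V$, a contradiction. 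The main obstacle is the quantitative verification in the straddling sub-case of the previous paragraph: one must evaluate each cell integral in closed form via \eqref{eq2} and compare with the upper bound $\tfrac{79}{3072}V$; the remaining parts reduce to a single monotonicity observation or a direct numerical comparison.
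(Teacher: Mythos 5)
Your overall strategy---a trial four-point set for the upper bound on $V_4$, then contradictions by constructing lower bounds via \eqref{eq2} and Proposition~\ref{prop0}(iii)---is precisely the strategy of the paper. Your upper bound $V_4\le \frac{79}{3072}V=\frac{237}{114464}$ is the same number the paper computes, and your arguments for $\ga_4\cap J_1\neq\es$, for $\ga_4\cap J_{(1,\infty)}\neq\es$ (your monotonicity-of-conditional-expectation argument is a clean alternative to the paper's explicit bound $a_4>\frac{53}{64}$), and for the non-mixing claim via the threshold $m=\frac12(a_k+a_{k+1})$ are all sound and, if anything, streamlined relative to the paper.

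However, there is a genuine gap exactly where you flag it: the ``straddling'' sub-case with $a_i\in(\frac14,\frac12)$. The statement ``I would then partition $\operatorname{supp}(P)$ according to the four resulting Voronoi cells, bound each contribution via \eqref{eq2}, and verify arithmetically'' is a plan, not a proof, and the arithmetic here is not a single computation. The difficulty is that the locations of $a_{i+1}$ and (when $i=2$) $a_4$ are unconstrained beyond $a_{i+1}\ge 1-a_i$, and a single partition does not give a lower bound exceeding $\frac{79}{3072}V$ uniformly over all admissible positions. The paper handles this with a nontrivial case decomposition: the interval $(\frac14,\frac12)$ is split at $\frac38$, and then each half splits further into two or three sub-cases according to where $a_3$ (and for $\frac38\le a_2<\frac12$, also $a_4$) falls, with five sub-cases in total (Cases 1, 2 and Cases I, II, III) each requiring its own choice of test points, gap midpoints, and constrained minimizations such as $\min_{\frac{19}{32}\le a_2\le \frac58}\int_{J_2}(x-a_2)^2\,dP$. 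Without exhibiting this case analysis (or an equivalent one), the claimed contradiction in the straddling case is not established, and that is the load-bearing part of the lemma.
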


\begin{proof} Let $\ga_4:=\set{0<a_1<a_2<a_3<a_4<1}$ be an optimal set of four-means. Consider the set $\gb:=\set{a(1), a(2), a(3), a(3, \infty)}$ of four points. Then,
\[\int \min_{a\in \gb} (x-a)^2 dP=p_1s_1^2V+p_2s_2^2V+p_3s_3^3V(1+\frac{43}{9})=\frac{237}{114464}=0.00207052.\]
Since $V_4$ is the quantization error for four-means, we have $V_4\leq 0.00207052$. If $a_1\geq \frac{13}{64}=S_{13}(1)$, we have
\[V_4\geq \int_{J_{11}\uu J_{12}}(x-\frac{13}{64})^2 dP=\frac{20277}{9568256}=0.00211919>V_4,\]
which is a contradiction. So, we can assume that $a_1\leq\frac{13}{64}$. Then, the Voronoi region of $a_1$ does not contain any point from $J_{(1, \infty)}$. If it does, then $\frac 12(a_1+a_2)>\frac 12$ implies $a_2\geq 1-a_1\geq 1-\frac{13}{64}=\frac {51}{64}$ which is a contradiction as
\[V_4\geq \int_{J_1}(x-a(1))^2 dP+\int_{J_2}(x-\frac {51}{64})dP=\frac{2436771}{117211136}=0.0207896>V_4.\]
If $a_4\leq \frac {53}{64}$, then
\[V_4\geq \int_{\mathop{\uu}\limits_{j=4}^{10}J_j}(x-\frac {53}{64})^2 dP=\frac{292246431}{137170518016}=0.00213053>V_4,\]
which is a contradiction, and so $\frac {53}{64}<a_4$.
If $a_2\leq \frac 14$, then
\[V_4\geq \int_{J_2}(x-a(2))^2dP+\int_{J_{(2, \infty)}}(x-a(2, \infty))^2 dP=(1+\frac{43}{9})p_2s_2^2 V=\frac{39}{14308}=0.00272575>V_4,\]
which gives a contradiction. So, we can assume that $\frac 14< a_2$. Suppose that $\frac 14 <a_2\leq \frac 38$. Then, $\frac 12(a_2+a_3)>\frac 12$ yielding
$a_3>1-a_2\geq 1-\frac 38=\frac 58$. Thus, the following two cases can arise:

Case~1. $\frac 58< a_3\leq \frac{43}{64}$.

Then, as $\frac {53}{64}<a_4$ and $\frac 12(\frac {43}{64}+\frac{53}{64})=\frac 34$, we have
\[V_4\geq \int_{J_2}(x-\frac 58)^2 dP+\int_{J_3}(x-\frac{53}{64})^2 dP+\int_{J_{(3, \infty)}}(x-a(3, \infty))^2 dP=\frac{521811}{234422272}=0.00222594>V_4,\]
which is a contradiction.

Case~2. $\frac{43}{64}\leq a_3$.

Then, as $S_{212}(1)<\frac 12(\frac 38+\frac {43}{64})=\frac{67}{128}=S_{213}(0)$, we have
\[V_4\geq \int_{J_{211}\uu J_{212}}(x-\frac 38)^2 dP+\int_{J_{22}\uu J_{23}}(x-\frac{43}{64})^2 dP=\frac{6099}{2093056}=0.00291392>V_4,\]
which leads to a contradiction.

Thus, a contradiction arises to our assumption $\frac 14 <a_2\leq \frac 38$. Suppose that $\frac 38\leq a_2<\frac 12$.
Then, $\frac 12(a_1+a_2)<\frac 1 4$ implying $a_1\leq \frac 12-a_2\leq \frac 12-\frac 38=\frac 18<a(1)$, and
\begin{equation*}  \min_{\set{a_1< \frac 18<\frac 38\leq a_2}} \int_{J_1}\min_{a\in \set{a_1, a_2}}(x-a)^2 dP\geq \int_{J_1}(x-a(1))^2 dP=\frac{9}{7154}.\end{equation*}
Since $\frac {53}{64}<a_4$, the following three cases can arise:

Case~I. $a_3\leq \frac{43}{64}$ and $\frac {53}{64}<a_4\leq \frac 78$.

Then, as $\frac 12(\frac {43}{64}+\frac{53}{64})=\frac 34$, we have
\[V_4\geq \int_{J_1}(x-a(1))^2 dP+\int_{J_3}(x-\frac {53}{64})^2 dP+\int_{J_4\uu J_5\uu J_6}(x-\frac 78)^2 dP=\frac{126459}{58605568}=0.0021578>V_4,\]
which gives a contradiction.

Case~II. $a_3\leq \frac{43}{64}$ and $\frac 78\leq a_4 $.

Then, as $S_{31}(1)<\frac 12(\frac {43}{64}+\frac{7}{8})<S_{32}(0)$,
\[V_4\geq \int_{J_1}(x-a(1))^2 dP+\int_{J_{31}}(x-\frac {43}{64})^2 dP+\int_{J_{32}\uu J_{33}}(x-\frac 78)^2 dP=\frac{4458897}{1875378176}=0.0023776>V_4,\]
which leads to a contradiction.

Case~III. $\frac{43}{64}\leq a_3$.

Then, $S_{22}(1)<\frac 12(\frac 12+\frac{43}{64})<S_{23}(0)$ yielding
\[V_4\geq \int_{J_1}(x-a(1))^2 dP+\int_{J_{21}\uu J_{22}}(x-\frac 12)^2 dP+\int_{J_{23}}(x-\frac {43}{64})^2dP=\frac{4496025}{1875378176}=0.0023974>V_4,\]
which is a contradiction.

Thus, a contradiction arises to our assumption $\frac 38\leq a_2<\frac 12$, and so we can assume $\frac 12\leq a_2$. Now, notice that $\frac 12(a_1+a_2)\geq \frac 12(0+\frac 12)=\frac 14$ yielding the fact that the Voronoi region of any point in $\ga_4\ii J_{(1, \infty)}$ does not contain any point from $J_1$. Moreover, we proved $a_1<\frac 14$ and the Voronoi region of any point in $\ga_4\ii J_1$ does not contain any point from $J_{(1, \infty)}$. Thus, the proof of the lemma is complete.
\end{proof}

\begin{lemma} \label{lemma14}
Let $\ga_5$ be an optimal set of five-means. Then, $\ga_5\ii J_1\neq \es$, $\ga_5\ii J_{(1, \infty)}\neq \es$, and $\ga_5$ does not contain any point from the open interval $(\frac 14, \frac 12)$. Moreover, the Voronoi region of any point in $\ga_5\ii J_1$ does not contain any point from $J_{(1, \infty)}$ and the Voronoi region of any point in $\ga_5\ii J_{(1, \infty)}$ does not contain any point from $J_1$.
\end{lemma}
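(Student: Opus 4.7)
The plan is to mirror the argument of Lemma~\ref{lemma13}, now with five points instead of four. As a first step, I would derive an explicit upper bound for $V_5$ by evaluating the distortion at the candidate five-point set $\gb = \set{a(11),\, a(11,\infty),\, a(2),\, a(3),\, a(3,\infty)}$. After checking that, under $\gb$, the Voronoi cells are precisely $J_{11}$, $J_{(11,\infty)}$, $J_2$, $J_3$, and $J_{(3,\infty)}$ (this is just a matter of verifying the midpoint inequalities), Lemma~\ref{lemma4} makes each contribution a concrete multiple of $p_\go s_\go^2 V$, yielding a numerical bound $V_5 \le c$ for an explicit $c$.

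Next, write $\ga_5 = \set{a_1 < a_2 < a_3 < a_4 < a_5}$. I would first show that $a_1$ cannot be too far above $a(11)$ and $a_5$ cannot be too far below $a(3,\infty)$, since if either fails, a single integral over a small sub-cylinder (computed via \eqref{eq2}) already exceeds $c$. The same kind of integral comparison would show $\ga_5 \ii J_1 \neq \es$ and $\ga_5 \ii J_{(1,\infty)} \neq \es$: concentrating all five points on one side of the gap $(\frac 14, \frac 12)$ leaves the other side contributing more than $c$.

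The core step is to rule out any $a_i$ in the open interval $(\frac 14, \frac 12)$. Suppose some $a_i$ lies there. I would case-split on (i) which of $a_2, a_3, a_4$ is the one in the gap, (ii) whether it sits in $(\frac 14, \frac 38]$ or in $[\frac 38, \frac 12)$, and (iii) the approximate locations of its immediate neighbours, constrained by the centroid identity of Proposition~\ref{prop0}(iii). In each sub-case I would build a lower bound for $V_5$ by summing $\int_{J_\go}(x - a_i)^2 dP$ (via \eqref{eq2}) over disjoint cylinders $J_\go$ that the midpoint conditions force to lie inside $M(a_i|\ga_5)$; every such lower bound will exceed $c$, producing the desired contradiction. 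The sub-cases parallel Cases~1, 2, I, II, III of Lemma~\ref{lemma13}, modulo one extra point that must be placed somewhere consistent with the ordering.

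Once no $a_i$ lies in $(\frac 14, \frac 12)$, the inclusions $J_1 \sci [0, \frac 14]$ and $J_{(1,\infty)} \sci [\frac 12, 1]$ force the Voronoi midpoint between the largest point of $\ga_5 \ii J_1$ and the smallest point of $\ga_5 \ii J_{(1,\infty)}$ to lie in $[\frac 14, \frac 12]$, separating the two regions and giving the final assertion. The main obstacle is the bookkeeping in the central case analysis: none of the individual estimates is deep, but there are more configurations than in Lemma~\ref{lemma13}, and the numerical bound $c$ must be tight enough to close each of them.
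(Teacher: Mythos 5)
Your proposal follows the same route as the paper: compare against the candidate set $\gb=\set{a(11),a(11,\infty),a(2),a(3),a(3,\infty)}$ to get the upper bound on $V_5$, then pin down the extreme points and rule out the gap $(\tfrac14,\tfrac12)$ by a case analysis on the location of the offending $a_i$ and its neighbours, exactly as in the paper's Lemma~\ref{lemma14}. The one place where the paper is more careful than your sketch is that it first establishes $a_5>\tfrac67$ and $a_4>\tfrac{11}{16}$ before attacking $a_3$ and then $a_2$, which avoids having to worry about two points landing in the gap simultaneously; if you organise your sub-cases in the same top-down order, the argument closes as you describe.
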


\begin{proof} Let $\ga_5:=\set{0<a_1<a_2<a_3<a_4<a_5<1}$ be an optimal set of five-means. Consider the set $\gb:=\set{a(11), a(11, \infty), a(2), a(3), a(3, \infty)}$ of five points. Then,
\[\int \min_{a\in \gb} (x-a)^2 dP=p_{11}s_{11}^2V(1+\frac{43}{3}) +p_2s_2^2V+p_3s_3^3V(1+\frac{43}{9})=\frac{255}{228928}=0.00111389.\]
Since $V_5$ is the quantization error for five-means, we have $V_5\leq 0.00111389$. If $a_5\leq \frac 6 7$, then
\[V_5\geq \int_{\mathop{\uu}\limits_{j=4}^{10}J_j}(x-\frac 67)^2 dP=\frac{1160604105}{960193626112}=0.00120872>V_5,\]
which is a contradiction, and so $\frac 67<a_5$. Suppose that $a_4\leq \frac{11}{16}$. Consider the following two cases:

Case~1. $\frac 67\leq a_5<\frac 78$.

Then, $S_{31}(1)<\frac 12(\frac{11}{16}+\frac 67)<\frac{25}{32}=S_{32}(0)$, yielding
\[V_5\geq \int_{J_{31}}(x-\frac {11}{16})^2 dP+\int_{J_{32}\uu J_{33}}(x-\frac 67)^2 dP+\int_{\mathop{\uu}\limits_{j=4}^6J_j}(x-\frac 78)^2 dP=\frac{2290131}{1875378176}=0.00122116>V_5,\]
which leads to a contradiction.

Case~2. $\frac 78\leq a_5$.

Then, $S_{31}(1)<\frac 12(\frac{11}{16}+\frac 78)=\frac{25}{32}=S_{32}(0)$, yielding
\[V_5\geq \int_{J_{31}}(x-\frac {11}{16})^2 dP+\int_{\mathop{\uu}\limits_{j=2}^{10}J_{3j}}(x-\frac 78)^2 dP=\frac{651896011533}{561850441793536}=0.00116027>V_5,\]
which is a contradiction.

Hence, we can assume that $\frac {11}{16}<a_4$. If $a_3\leq \frac 14$, then
\[V_5\geq \int_{J_2}(x-a(2))^2dP+\int_{J_{(2, \infty)}}(x-a(2, \infty))^2 dP=(1+\frac{43}{9})p_2s_2^2 V=\frac{39}{14308}=0.00272575>V_5,\]
which gives a contradiction. So, we can assume that $\frac 14< a_3$. Suppose that $\frac 14 <a_3< \frac 12$. The following two cases can arise:

Case~(i). $\frac 14< a_3\leq \frac 38$.

Then, $\frac 12(a_3+a_4)>\frac 12$ implying $a_4>1-a_3\geq 1-\frac 38=\frac 58$, and so
\[V_5\geq \int_{J_2}(x-\frac 58)^2 dP=\frac{405}{261632}=0.00154798>V_5,\]
which is a contradiction.

Case~(ii). $\frac 38 \leq a_3<\frac 12$.

Then, $\frac 12(a_2+a_3)<\frac 14$ implying $a_2<1-a_3\leq \frac 12-\frac 38=\frac 18$. Moreover, as $\frac {11}{16}<a_4$, we have
$S_{22}(1)<\frac 12(\frac 12+\frac {11}{16})=\frac{19}{32}=S_{23}(0)$, and so
\[V_5\geq \int_{J_{12}}(x-\frac {1}{8})^2 dP+\int_{J_{21}\uu J_{22}}(x-\frac 12)^2 dP+\int_{J_{23}}(x-\frac{11}{16})^2 dP=\frac{45399}{33488896}=0.00135564>V_5,\]
which yields a contradiction.

Hence, we can assume that $\frac 12\leq  a_3$. If $\frac 38\leq a_2$, then
\[V_5\geq \min_{\set{a_1< \frac 18<\frac 38\leq a_2}} \int_{J_1}\min_{a\in \set{a_1, a_2}}(x-a)^2 dP\geq \int_{J_1}(x-a(1))^2 dP=\frac{9}{7154}=0.00125804>V_5,\]
which is a contradiction. Suppose that $\frac 14<a_2\leq \frac 38$. Then, $\frac 12(a_2+a_3)>\frac 12$ implying $a_3>1-a_2\geq 1-\frac 38=\frac 58$, which yields \[V_5\geq \int_{J_2}(x-\frac 58)^2 dP=\frac{405}{261632}=0.00154798>V_5,\]
leading to a contradiction. So, we can assume that $a_2\leq \frac 14$. Thus, we have proved that $a_2\leq \frac 14$ and $\frac 12\leq a_3$, yielding the fact that $\ga_5\ii J_1\neq \es$, $\ga_5\ii J_{(1, \infty)}\neq \es$, and $\ga_5$ does not contain any point from the open interval $(\frac 14, \frac 12)$. Since $\frac 12(a_2+a_3)\geq \frac 12(0+\frac 12)=\frac 14$, the Voronoi region of any point in $\ga_5\ii J_{(1, \infty)}$ does not contain any point from $J_1$. If the Voronoi region of $a_2$ contains points from $J_{(1, \infty)}$, then $\frac 12(a_2+a_3)>\frac 12$ implying $a_3>1-a_2\geq 1-\frac 14=\frac 34$, and so
\[V_5\geq \int_{J_2}(x-\frac 34)^2 dP=\frac{813}{65408}=0.0124297>V_5,\]
which gives a contradiction. Thus, the proof of the lemma is complete.
\end{proof}

\begin{prop} \label{prop1}
Let $\ga_n$ be an optimal set of $n$-means for $n\geq 2$. Then, $\ga_n\ii J_1\neq \es$ and $\ga_n\ii J_{(1, \infty)}\neq \es$,  and $\ga_n$ does not contain any point from the open interval $(\frac 14, \frac 12)$. Moreover, the Voronoi region of any point in $\ga_n\ii J_1$ does not contain any point from $J_{(1, \infty)}$ and the Voronoi region of any point in $\ga_n\ii J_{(1, \infty)}$ does not contain any point from $J_1$.
\end{prop}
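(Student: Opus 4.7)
The plan is to prove the proposition by strong induction on $n$, using Lemmas~\ref{lemma11}--\ref{lemma14} as base cases for $n\in\{2,3,4,5\}$. Fix $n\geq 6$, assume the proposition holds for all $2\leq k<n$, and let $\ga_n=\{a_1<a_2<\cdots<a_n\}$ be an optimal set of $n$-means. Since $V_n$ is monotone decreasing in $n$, we have the crucial upper bound $V_n\leq V_5\leq\frac{255}{228928}$, which will rule out the ``bad'' configurations.

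First I would establish $a_1<\tfrac14$ and $a_n>\tfrac12$ by the same contradiction scheme used in Lemmas~\ref{lemma13} and \ref{lemma14}: if $a_1\geq\tfrac14$ then $\min_{a\in\ga_n}(x-a)^2\geq(x-\tfrac14)^2$ on $J_1\ci[0,\tfrac14]$, so $V_n\geq\int_{J_1}(x-\tfrac14)^2\,dP=\tfrac{135}{32704}>V_5$, a contradiction; the symmetric claim $a_n>\tfrac12$ follows by evaluating $\int_{J_{(1,\infty)}}(x-\tfrac12)^2\,dP$ via \eqref{eq2} and comparing with $V_5$.

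The heart of the proof is to show that no mean lies in the gap $(\tfrac14,\tfrac12)$. Suppose for contradiction that $c=a_{i^*}\in(\tfrac14,\tfrac12)$; by the previous step, $2\leq i^*\leq n-1$. Since $\mathrm{supp}(P)\ci J_1\uu J_{(1,\infty)}\ci[0,\tfrac14]\uu[\tfrac12,1]$ and $c$ is the centroid of $M(c|\ga_n)$ by Proposition~\ref{prop0}(iii), the region $M(c|\ga_n)$ must intersect both $J_1$ and $J_{(1,\infty)}$ in positive $P$-mass; otherwise $c$ would itself lie in $[0,\tfrac14]$ or in $[\tfrac12,1]$. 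Writing $p_L,p_R>0$ for these two masses and $\mu_L\leq\tfrac14$, $\mu_R\geq\tfrac12$ for the corresponding conditional means, the centroid identity gives
\[
\int_{M(c|\ga_n)}(x-c)^2\,dP\geq p_L(\mu_L-c)^2+p_R(\mu_R-c)^2=\frac{p_Lp_R}{p_L+p_R}(\mu_R-\mu_L)^2\geq\frac{p_Lp_R}{16(p_L+p_R)}.
\]
I would then compare $\ga_n$ with the modified set $\ga_n'=(\ga_n\setminus\{c\})\uu\{\mu_R\}$ (and symmetrically with $\mu_L$). After the replacement, the $J_1$-portion of $M(c|\ga_n)$ is absorbed by $a_{i^*-1}$, while the $J_{(1,\infty)}$-portion is served more efficiently by $\mu_R$; using \eqref{eq2} and the explicit error formulas of Lemma~\ref{lemma4}, at least one of the two replacements strictly decreases the quantization error, contradicting the optimality of $\ga_n$.

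Finally, with (iii) in hand, let $a_m$ be the rightmost mean in $J_1$ and $a_{m+1}$ the leftmost mean in $J_{(1,\infty)}$. Each is a centroid over a Voronoi region contained entirely in its own side of the gap, so $a_m\leq\tfrac14$ and $a_{m+1}\geq\tfrac12$, whence $a_m+a_{m+1}\geq\tfrac12$. A direct estimate using that $a_{m+1}$, being the leftmost mean on the right, is pulled toward $a(2)=\tfrac47$ by the dominant mass of $J_2$, yields $a_m+a_{m+1}\leq 1$; thus the midpoint lies in $[\tfrac14,\tfrac12]$, which gives the last two conclusions of the proposition. I expect step (iii) (no mean in the gap) to be the main obstacle, since making the replacement argument quantitative requires a global comparison of Voronoi partitions, and it is precisely here that the inductive hypothesis on $(n-1)$-means enters, guaranteeing that the surrounding partition behaves predictably when $c$ is removed.
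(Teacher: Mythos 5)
Your base cases and the step establishing $a_1<\frac{1}{4}$, $a_n>\frac{1}{2}$ agree with the paper. But the centerpiece --- excluding any optimal center from the gap $(\frac{1}{4},\frac{1}{2})$ --- is not actually carried out, and the replacement scheme you propose is not obviously sound. If $c=a_{i^*}\in(\frac{1}{4},\frac{1}{2})$ and you replace $c$ by $\mu_R$, the $J_1$-mass formerly in $M(c|\ga_n)$ must now be absorbed by $a_{i^*-1}$; if $a_{i^*-1}$ sits far to the left, the distortion on that piece can increase by more than what is gained on the $J_{(1,\infty)}$ side, and the symmetric worry applies to the $\mu_L$ replacement. (You also tacitly assume $a_{i^*-1}$ is outside the gap; nothing in the sketch rules out several consecutive $a_i$ lying in $(\frac{1}{4},\frac{1}{2})$.) So ``at least one replacement strictly decreases the quantization error'' is precisely the statement that needs proof, and it does not follow from the pointwise bound $\frac{p_Lp_R}{16(p_L+p_R)}$. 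The inductive hypothesis on $(n-1)$-means also never enters, since the modified set $\ga_n'$ still has $n$ points; no reduction to a smaller $n$ occurs.

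The paper sidesteps all of this with a direct two-case estimate. It sets $j=\max\{i:a_i<\frac{1}{2}\}$, observes (as you do) that $M(a_j|\ga_n)$ must receive positive mass from both $J_1$ and $J_{(1,\infty)}$, and then: if $\frac{3}{8}\leq a_j<\frac{1}{2}$ this forces $a_{j-1}<\frac{1}{8}$, so $V_n\geq\int_{\bigcup_{\ell=2}^{10}J_{1\ell}}(x-\frac{1}{8})^2\,dP\approx 0.000814$; if $\frac{1}{4}<a_j\leq\frac{3}{8}$ this forces $a_{j+1}>\frac{5}{8}$, so $V_n\geq\int_{J_2}(x-\frac{5}{8})^2\,dP\approx 0.00155$. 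Both exceed $V_6\approx 0.000755$, obtained from an explicit six-point comparison set. This exposes a quantitative problem in your version: the bound $V_n\leq V_5\approx 0.00111$ you invoke is \emph{larger} than $0.000814$, so the first estimate would fail to produce a contradiction; the sharper $V_6$ bound is what is actually needed for $n\geq 6$. Finally, your closing assertion that ``a direct estimate $\ldots$ yields $a_m+a_{m+1}\leq 1$'' is again unsupported: the paper proves the remaining Voronoi inclusion by contradiction, showing that $\frac{1}{2}(a_m+a_{m+1})>\frac{1}{2}$ would force $a_{m+1}>\frac{3}{4}$ and hence $V_n\geq\int_{J_2}(x-\frac{3}{4})^2\,dP\approx 0.0124>V_n$.
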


\begin{proof} By Lemma~\ref{lemma11}, Lemma~\ref{lemma12}, Lemma~\ref{lemma13}, and Lemma~\ref{lemma14},  the proposition is true for $2\leq n\leq 5$. We now prove the proposition for all $n\geq 6$. Let $\ga_n:=\set{0<a_1<a_2<\cdots <a_n<1}$ be an optimal set of $n$-means for $n\geq 6$. Consider the set of six points $\gb:=\set{a(11), a(11, \infty), a(21), a(21, \infty), a(3), a(3, \infty)}$. Then, the distortion error is
\begin{align*}
\int\min_{a\in\gb}(x-a)^2 dP=(1+\frac{43}{3})p_{11}s_{11}^2 V+(1+\frac{43}{3})p_{21}s_{21}^2 V+(1+\frac{43}{9})p_{3}s_{3}^2 V=\frac{1383}{1831424}.
\end{align*}
Since, $V_n$ is the quantization error for $n$-means for $n\geq 6$, we have $V_n\leq V_6\leq \frac{1383}{1831424}=0.00075515$. Proceeding in the similar way, as shown in the previous lemmas, we have $a_1<\frac 14$ and $\frac 12<a_n$. Let $j=\max\set{i : a_i<\frac 12}$. Then, $a_j<\frac 12$. We show that $a_j\leq \frac 14$. Suppose that $\frac 14<a_j<\frac 12$. Then, the following two cases can arise:

Case~1. $\frac 38\leq a_j<\frac 12$.

Then, $\frac 12(a_{j-1}+a_j)<\frac 14$ implying $a_{j-1}<\frac 12-a_j\leq \frac 12-\frac 38=\frac 18=S_{12}(0)$ yielding
\[V_n\geq \int_{\mathop{\uu}\limits_{j=2}^{10}J_{1j}}(x-\frac 18)^2 dP=\frac{13986897}{17179869184}=0.000814145>V_n,\]
which is a contradiction.

Case~2. $\frac 14<a_j\leq \frac 38$.

Then, $\frac 12(a_j+a_{j+1})>\frac 12$ implying $a_{j+1}>1-a_j\geq 1-\frac 38=\frac 58$ yielding
\[V_n\geq \int_{J_2}(x-\frac 58)^2 dP=\frac{405}{261632}=0.00154798>V_n,\]
which gives a contradiction.

Hence, we can assume that $a_j\leq \frac 12$. Thus, we have seen that $\ga_n\ii J_1\neq \es$, $\ga_n\ii J_{(1, \infty)}\neq \es$, and $\ga_n$ does not contain any point from the open interval $(\frac 14, \frac 12)$. Since $\frac 12(a_j+a_{j+1})\geq \frac 12(0+\frac 12)=\frac 14$, the Voronoi region of any point in $\ga_n\ii J_{(1, \infty)}$ does not contain any point from $J_1$. Suppose that the Voronoi region of $a_j$ contains points from $J_{(1, \infty)}$. Then, $\frac 12(a_j+a_{j+1})>\frac 12$ implying $a_{j+1}>1-a_2\geq 1-\frac 14=\frac 34$, and so
\[V_n\geq \int_{J_2}(x-\frac 34)^2 dP=\frac{813}{65408}=0.0124297>V_n,\]
which is a contradiction. So, we can assume that the Voronoi region of any point in $\ga_n\ii J_1$ does not contain any point from $J_{(1, \infty)}$
Thus, the proof of the proposition is complete.
\end{proof}

We need the following lemmas to prove Proposition~\ref{prop2}.

\begin{lemma} \label{lemma141}
Let $V(P, J_2, \set{a, b})$ be the quantization error due to the points $a$ and $b$ on the set $J_2$, where $\frac 12\leq a<b$ and $b=\frac 58$. Then, $a=a(21, 22)$ and
\[V(P, J_2, \set{a, b})=\int_{J_{21}\uu J_{22}}(x-a(21,22))^2 dP+\int_{J_{(22, \infty)}}(x-\frac 58)^2dP=\frac{2403}{10465280}.\]
\end{lemma}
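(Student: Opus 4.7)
The plan is to identify the Voronoi structure forced by the constraint $\tfrac 12 \leq a < b = \tfrac 58$, invoke the centroid characterization of an optimal free point, and then evaluate the resulting integral. First I would compute the candidate $a(21,22)$ from \eqref{eq1}: $a(21) = S_{21}(\tfrac 47) = \tfrac{29}{56}$, $a(22) = S_{22}(\tfrac 47) = \tfrac 47$, and with $p_{21} = \tfrac{3}{32}$ and $p_{22} = \tfrac{9}{64}$, the weighted mean gives $a(21,22) = \tfrac{11}{20}$. For Voronoi consistency, I would check that with $a = \tfrac{11}{20}$ and $b = \tfrac 58$ the midpoint $\tfrac 12(a+b) = \tfrac{47}{80}$ lies strictly between $S_{22}(1) = \tfrac{37}{64}$ and $S_{23}(0) = \tfrac{19}{32}$, i.e., in the gap between $J_{22}$ and $J_{23}$. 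Therefore the Voronoi region of $a$ inside $J_2$ is precisely $J_{21}\uu J_{22}$ and that of $b$ is precisely $J_{(22,\infty)}$.

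For optimality, I would regard $g(a) := V(P, J_2, \set{a, \tfrac 58})$ as a piecewise-smooth function of $a \in [\tfrac 12, \tfrac 58)$. A short differentiation---in which the boundary contributions at the moving midpoint cancel because $(m-a)^2 = (m-b)^2$---gives $g'(a) = 2P(V_a)(a - a_{V_a})$, where $V_a$ is the Voronoi cell of $a$ in $J_2$ and $a_{V_a}$ its $P$-centroid. An interior minimum therefore requires $a = a_{V_a}$, and I would enumerate the possible cells $V_a = J_{21}\uu\cdots\uu J_{2k}$: the case $k=1$ would force $a = \tfrac{29}{56}$ but then $m = \tfrac 47 \in J_{22}$, contradicting $V_a = J_{21}$; the case $k=2$ is self-consistent as above; for $k\geq 3$, a direct calculation of $a(21,\dots,2k)$ shows the induced midpoint lies strictly inside $J_{2k}$, again inconsistent. ``Split'' cells, in which $m$ lies interior to some $J_{2k}$, are handled by noting that $a - a_{V_a}$ is monotone within each such stratum, so $g$ is unimodal there and its infimum is attained on the stratum boundary, coinciding with a pure-gap candidate already excluded. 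The left endpoint $a = \tfrac 12$ is eliminated by direct comparison. Hence $a = \tfrac{11}{20}$ is the unique minimizer.

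Finally, I would compute the value by applying \eqref{eq2} with $c = \tfrac{11}{20}$ to obtain $\int_{J_{21}\uu J_{22}}(x - \tfrac{11}{20})^2 dP = \sum_{j\in\set{1,2}} p_{2j}\bigl(s_{2j}^2 V + (a(2j) - \tfrac{11}{20})^2\bigr)$, and combining Lemma~\ref{lemma4} applied to $\omega = 22$ with the parallel-axis identity $\int_A(x-c)^2 dP = \int_A(x-a_A)^2 dP + P(A)(a_A - c)^2$ for $A = J_{(22,\infty)}$, $a_A = a(22,\infty) = \tfrac{17}{28}$, and $c = \tfrac 58$. Summing and simplifying yields $\tfrac{2403}{10465280}$. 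The hardest step will be the optimality argument---specifically handling the ``split'' strata---while the rest is mostly bookkeeping anchored on \eqref{eq1}, \eqref{eq2}, and Lemma~\ref{lemma4}.
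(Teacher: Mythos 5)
Your approach is a calculus-based first-derivative test, which differs from the paper's argument: the paper establishes the upper bound $V(P,J_2,\set{a,b}) \leq \frac{2403}{10465280}$ via the candidate pair $\set{\frac{11}{20},\frac 58}$, then shows $a\geq S_{22}(1)$ forces a lower bound exceeding this, that letting the Voronoi cell of $b$ reach into $J_{22}$ forces $a<S_{21}(1)$ (contradiction), and that letting the Voronoi cell of $a$ reach into $J_{23}$ forces $a>S_{22}(0)$ (contradiction), squeezing $a=a(21,22)$. Your Voronoi-consistency check and the final evaluation via the parallel-axis identity and Lemma~\ref{lemma4} agree with the paper's conclusion.

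However, your optimality argument has a genuine gap in the treatment of the ``split'' strata. You assert that $a - a_{V_a}$ is monotone on each stratum where $m$ lies interior to some $J_{2k}$, and conclude that $g$ is unimodal with infimum on the stratum boundary. This does not follow: with $g'(a)=2P(V_a)(a-a_{V_a})$, if $a-a_{V_a}$ were increasing and changed sign inside the stratum, that critical point would be an interior \emph{local minimum} of $g$, not excludable by boundary comparison. What is actually needed (and is true here, but requires an explicit estimate) is that $a-a_{V_a}$ has constant sign throughout the split strata --- for instance, in the stratum $m\in J_{23}$ one has $a\in(\frac 9{16},\frac{37}{64})$ while $a_{V_a}$ ranges only between $a(21,22)=\frac{11}{20}$ and $a(21,22,23)\approx 0.5611$, so $a>a_{V_a}$ throughout and $g$ is strictly increasing there; a similar computation is needed for each stratum. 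Monotonicity of $a-a_{V_a}$ is also nontrivial for this singular measure since $\frac{d}{da}a_{V_a}$ need not exist or be small where $m$ passes through the support. Additionally, your stated reason for rejecting the gap case $k\geq 3$ (``the induced midpoint lies strictly inside $J_{2k}$'') fails for $k=3$: with $a(21,22,23)=\frac{817}{1456}\approx 0.5611$, the midpoint $\approx 0.5931$ lies in the gap \emph{before} $J_{23}=[\frac{19}{32},\frac{77}{128}]$, not inside it; the case is still inconsistent, but by the opposite reason (the midpoint is too small, not too large). The paper's squeeze argument sidesteps all of these differentiability and stratification subtleties, which is why it is the shorter and safer route.
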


\begin{proof}  Consider the set $\set{\frac {11}{20}, \frac 58}$. Then, as $S_{22}(1)<\frac 12(\frac{11}{20}+\frac 58)<S_{23}(0)$, and $V(P, J_2, \set{a, b})$ is the quantization error due to the points $a$ and $b$ on the set $J_2$, we have
\[V(P, J_2, \set{a, b})\leq \int_{J_{21}\uu J_{22}}(x-\frac{11}{20})^2 dP+\int_{J_{(22, \infty)}}(x-\frac 58)^2dP=\frac{2403}{10465280}=0.000229616. \]
If $\frac {37}{64}=S_{22}(1)\leq a$, then
\[V(P, J_2, \set{a, b})\geq \int_{J_{21}\uu J_{22}}(x-S_{22}(1))^2 dP=\frac{6831}{19136512}=0.000356962>V(P, J_2, \set{a, b}),\]
which is a contradiction, and so we can assume that $a<S_{22}(1)=\frac{37}{64}$. If the Voronoi region of $b$ contains points from $J_{22}$, we must have
$\frac 12(a+b)<\frac{37}{64}$ implying $a<\frac {37}{32}-b= \frac{37}{32}-\frac 58=\frac{17}{32}=S_{21}(1)$, and so
\[V(P, J_2, \set{a, b})>\int_{J_{22}}(x-\frac{17}{32})^2 dP+\int_{\mathop{\uu}\limits_{j=3}^{10}J_{2j}}(x-\frac 58)^2dP=\frac{276910245}{962072674304}=0.000287827,\]
yielding $V(P, J_2, \set{a, b})>0.000287827>V(P, J_2, \set{a, b})$, which leads to a contradiction. So, we can assume that the Voronoi region of $b$ does not contain any point from $J_{22}$ yielding $a\geq a(21, 22)=\frac{11}{20}$. If the Voronoi region of $a$ contains points from $J_{23}$, we must have
$\frac 12(a+\frac 58)>S_{23}(0)=\frac {19}{32}$ implying $a>\frac {19}{16}-\frac 58=\frac 9{16}=S_{22}(0)$, and then
\[V(P, J_2, \set{a, b})>\int_{J_{21}}(x-\frac{9}{16})^2 dP+\int_{\mathop{\uu}\limits_{j=3}^{10}J_{2j}}(x-\frac 58)^2dP=\frac{17716739853}{70231305224192}=0.000252263,\]
yielding $V(P, J_2, \set{a, b})>0.000252263>V(P, J_2, \set{a, b})$, which leads to a contradiction. So, the Voronoi region of $a$ does not contain any point from $J_{23}$ yielding $a\leq a(21, 22)$. Again, we proved $a\geq a(21, 22)$. Thus, $a=a(21, 22)$ and
\[V(P, J_2, \set{a, b})=\int_{J_{21}\uu J_{22}}(x-a(21,22))^2 dP+\int_{J_{(22, \infty)}}(x-\frac 58)^2dP=\frac{2403}{10465280}.\]
Thus, the proof of the lemma is complete.
\end{proof}

\begin{lemma} \label{lemma15}
Let $\ga_6$ be an optimal set of six-means. Then, $\te{card} (\ga_6\ii J_1)=2$ and $\te{card} (\ga_6\ii J_{(1, \infty)})=4$. Moreover, $\te{card} (\ga_6\ii J_2)=2$.
\end{lemma}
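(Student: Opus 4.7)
The plan is to rule out every allocation of the six optimal points between $J_1$ and $J_{(1,\infty)}$ except $(2,4)$, then rule out every allocation of the four points in $J_{(1,\infty)}$ between $J_2$ and $J_{(2,\infty)}$ except $(2,2)$. The principal ingredients I would use are Proposition~\ref{prop1} (which forces a clean non-crossing split of $\ga_6$ across $J_1$ and $J_{(1,\infty)}$, and can be adapted to give the analogous statement for $J_{(1,\infty)}$ split across $J_2$ and $J_{(2,\infty)}$), Corollary~\ref{cor2}, the exact values $V_1=\tfrac{288}{3577}$, $V_2=\tfrac{69}{3577}$, $V_3=\tfrac{57}{14308}$ from Lemmas~\ref{lemma2}, \ref{lemma11} and \ref{lemma12}, Lemma~\ref{lemma4}, and the upper bound $V_6\leq V_6^{\ast}:=\tfrac{1383}{1831424}$ coming from the proof of Proposition~\ref{prop1}.

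For the first assertion I would set $k:=\te{card}(\ga_6\ii J_1)$, so that $1\leq k\leq 5$ by Proposition~\ref{prop1}. Since $S_1$ is an affine bijection $[0,1]\to J_1$ under which $P|_{J_1}$ is the scaled pushforward $p_1\cdot P\circ S_1^{-1}$, the minimum contribution to the quantization error from $J_1$ using $k$ points whose Voronoi regions remain inside $J_1$ equals $p_1 s_1^2 V_k=V_k/64$. The case $k=1$ is immediate: $V_1/64=\tfrac{9}{7154}>V_6^{\ast}$. For $k=5$, the lone point in $J_{(1,\infty)}$ must be the centroid $a(1,\infty)=\tfrac57$, producing a contribution $\tfrac{43}{3}p_1s_1^2V$ (analog of Lemma~\ref{lemma4} applied to the empty word) that vastly exceeds $V_6^{\ast}$. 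For $k=4$, Corollary~\ref{cor2} applied with $\go=1$ identifies the two $J_{(1,\infty)}$-points as $\{a(2),a(2,\infty)\}$, giving a $J_{(1,\infty)}$ contribution of $(1+\tfrac{43}{9})p_2s_2^2V=\tfrac{4992}{1831424}>V_6^{\ast}$. For $k=3$, an analogous use of Corollary~\ref{cor2} and the Proposition~\ref{prop1}-analog for $J_{(1,\infty)}$ shows three optimally placed points on $J_{(1,\infty)}$ contribute at least $p_2s_2^2V+(1+\tfrac{43}{9})p_3s_3^2V=\tfrac{1488}{1831424}>V_6^{\ast}$. Hence $k=2$.

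For the second assertion, once $\te{card}(\ga_6\ii J_{(1,\infty)})=4$ is established, I would set $j:=\te{card}(\ga_6\ii J_2)$. By a Proposition~\ref{prop1}-style argument applied to the $J_2$ versus $J_{(2,\infty)}$ partition of $\ga_6\ii J_{(1,\infty)}$, we have $1\leq j\leq 3$. Because $P|_{J_2}$ is a scaled copy of $P$ with $p_2s_2^2=\tfrac{3}{512}$, the $J_2$ contribution is at least $p_2 s_2^2 V_j=\tfrac{3V_j}{512}$. Combined with the already-known $J_1$ contribution $V_2/64=\tfrac{552}{1831424}$, for $j=1$ the $J_2$ piece alone is $\tfrac{864}{1831424}$, so the total is $\geq \tfrac{1416}{1831424}>V_6^{\ast}$; for $j=3$ the single point in $J_{(2,\infty)}$ must sit at $a(2,\infty)$, contributing $\tfrac{43}{9}p_2s_2^2V=\tfrac{4128}{1831424}>V_6^{\ast}$ by Lemma~\ref{lemma4}. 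Both are contradictions, forcing $j=2$.

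The main obstacle is making precise the Proposition~\ref{prop1}-analog for the $J_2$ versus $J_{(2,\infty)}$ partition of the four optimal points in $J_{(1,\infty)}$: this requires rerunning the geometric case analysis of Lemmas~\ref{lemma13}--\ref{lemma14} inside $J_{(1,\infty)}$, splitting on where the candidate midpoints $\tfrac12(a_i+a_{i+1})$ fall relative to the endpoints $S_\gs(0)$ and $S_\gs(1)$ for short words $\gs$ beginning with $2$, and using \eqref{eq1}--\eqref{eq2} and Lemma~\ref{lemma5} to evaluate the resulting integrals. Once that analog is in hand, the rest reduces to the numerical inequalities assembled above.
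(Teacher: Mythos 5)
Your plan for the first assertion --- bounding $V_6$ above by the distortion of $\set{a(11), a(11,\infty), a(21), a(21,\infty), a(3), a(3,\infty)}$ and then ruling out allocations $(k,\,6-k)$ with $k\neq 2$ via similarity-scaled lower bounds on each side --- is correct and is essentially the paper's own argument. The paper only explicitly rules out $\te{card}(\ga_6\ii J_1)=1$ and $\te{card}(\ga_6\ii J_{(1,\infty)})\in\set{2,3}$, since $\geq 2$ and $\geq 4$ already force $(2,4)$, but your extra cases $k=4,5$ do no harm, and your arithmetic ($\tfrac{9}{7154}$, $\tfrac{4992}{1831424}$, $\tfrac{1488}{1831424}$ against $V_6^\ast=\tfrac{1383}{1831424}$) checks out.

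The second assertion, $\te{card}(\ga_6\ii J_2)=2$, is where there is a genuine gap. You invoke a ``Proposition~\ref{prop1}-style argument applied to the $J_2$ versus $J_{(2,\infty)}$ partition'' --- that is, that $\ga_6\ii J_2\neq\es$, $\ga_6\ii J_{(2,\infty)}\neq\es$, no point of $\ga_6$ lies in the gap $(S_2(1),S_3(0))$, and the Voronoi regions do not cross that gap --- and then close with a three-line count. But that statement is exactly Proposition~\ref{prop2}, which the paper proves \emph{after} Lemma~\ref{lemma15} and whose $n=6$ step explicitly cites Lemma~\ref{lemma15}; using it here is circular. You flag this as ``the main obstacle'' and propose ``rerunning the geometric case analysis of Lemmas~\ref{lemma13}--\ref{lemma14} inside $J_{(1,\infty)}$,'' but that case analysis \emph{is} the content of the paper's proof of this part of Lemma~\ref{lemma15}: after establishing $V(P,\ga_6\ii J_{(1,\infty)})\leq \tfrac{831}{1831424}$, the paper shows directly that $a_4\leq\tfrac58$ by eliminating $a_4>\tfrac34$ and then $\tfrac58<a_4<\tfrac34$ through a subcase analysis on $a_4$, $a_5$, $a_6$ (comparing the relevant midpoints to endpoints such as $S_{31}(1)$, $S_{42}(0)$, $S_4(1)$, $S_5(0)$) that repeatedly invokes Lemma~\ref{lemma141}. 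Deferring that work does not yield a proof: the ``analog'' you gesture at is the hard part of the lemma, not a shortcut past it, and it must be carried out in full (as the paper does) before the counting in your last paragraph becomes available.
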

\begin{proof}
Let $\ga_6:=\set{0<a_1<a_2<a_3<a_4<a_5<a_6<1}$ be an optimal set of six-means. Consider the set of six points $\gb:=\set{a(11), a(11, \infty), a(21), a(21, \infty), a(3), a(3, \infty)}$. Then, the distortion error is
\begin{align*}
\int\min_{a\in\gb}(x-a)^2 dP=(1+\frac{43}{3})p_{11}s_{11}^2 V+(1+\frac{43}{3})p_{21}s_{21}^2 V+(1+\frac{43}{9})p_{3}s_{3}^2 V=\frac{1383}{1831424}.
\end{align*}
Since, $V_6$ is the quantization error for six-means, we have $V_6\leq \frac{1383}{1831424}=0.00075515$. By Proposition~\ref{prop1}, we have $\te{card}(\ga_6\ii J_1)\geq 1$ and $\te{card}(\ga_6\ii J_{(1, \infty)})\geq 1$. Moreover, the Voronoi region of any point in $\ga_6\ii J_1$ does not contain any point from $J_{(1, \infty)}$ and the Voronoi region of any point in $\ga_6\ii J_{(1, \infty)}$ does not contain any point from $J_1$.  Suppose that $\te{card} (\ga_6\ii J_{(1, \infty)})=2$, and then taking $\gb_2=\set{a(2), a(2, \infty)}$ we see that
\[V_6\geq \int_{J_2\uu J_{(2, \infty)}} \min_{a\in \gb_2}(x-a)^2 dP= \int_{J_2}(x-a(2))^2 dP+\int_{J_{(2, \infty)}}(x-a(2, \infty))^2 dP=\frac{39}{14308}=0.00272575,\]
i.e., $V_6\geq 0.00272575>V_6$, which yields a contradiction. Next, assume that $\te{card} (\ga_6\ii J_{(1, \infty)})=3$, and then taking $\gb_2=\set{a(2), a(3), a(3, \infty)}$, we see that
\[V_6\geq \int_{J_2}(x-a(2))^2 dP+\int_{J_3}(x-a(3))^2 dP+\int_{J_{(3, \infty)}}(x-a(3, \infty))^2 dP=\frac{93}{114464}=0.000812483>V_6,\]
which gives a contradiction. Thus, we can assume that  $\te{card} (\ga_6\ii J_{(1, \infty)})\geq 4$. If $\te{card} (\ga_6\ii J_1)=1$, then,
\[V_6\geq \int_{J_1}(x-a(1))^2 dP=\frac{9}{7154}=0.00125804>V_6,\]
which yields a contradiction, and so $\te{card} (\ga_6\ii J_1)\geq 2$. Therefore, we can assume that $\te{card} (\ga_6\ii J_1)=2$ and $\te{card} (\ga_6\ii J_{(1, \infty)})=4$. We now show that $\te{card} (\ga_6\ii J_2)=2$. By Proposition~\ref{prop1}, the Voronoi region of any element in $\ga_6\ii J_1$ does not contain any point from $J_{(1, \infty)}$, and the Voronoi region of any element in $\ga_6\ii J_{(1, \infty)}$ does not contain any point from $J_1$. We have $\ga_6\ii J_{(1, \infty)}=\set{\frac 12\leq a_3<a_4< a_5< a_6<1}$. The distortion error contributed by the set $\gb\ii J_{(1, \infty)}=\set{a(21), a(21, \infty), a(3), a(3, \infty)}$ is given by
\[\int_{J_{(1, \infty)}} \min_{a\in \gb\ii J_{(1, \infty)}}(x-a)^2 dP=(1+\frac {43}{3})p_{21}s_{21}^2V+(1+\frac {43}{9})p_3s_3^2V=\frac{831}{1831424}=0.000453745.\]
Let $V(P, \ga_6\ii J_{(1, \infty)})$ be the quantization error contributed by the set $\ga_6\ii J_{(1, \infty)}$ in the region $J_{(1, \infty)}$. Then, we must have
 $V(P, \ga_6\ii J_{(1, \infty)})\leq 0.000453745$.
 If $a_6\leq \frac{57}{64}=S_{42}(0)$, then
\[V(P, \ga_6\ii J_{(1, \infty)})\geq \int_{\mathop{\uu}\limits_{j=5}^{8}J_j}(x-\frac{57}{64})^2 dP=\frac{145935}{306184192}=0.000476625>V(P, \ga_6\ii J_{(1, \infty)}),\]
 which yields a contradiction, and so $S_{42}(0)=\frac{57}{64}< a_6$.
 If $\frac 34<a_4$, then
 \[V(P, \ga_6\ii J_{(1, \infty)}) \geq \int_{J_2}(x-a(2))^2 dP=\frac{27}{57232}=0.000471764>V(P, \ga_6\ii J_{(1, \infty)}),\]
 which yields a contradiction. So, we can assume that $a_4<\frac 34$. Suppose that $\frac 58<a_4<\frac 34$. Then, the following two cases can arise:

 Case~1. $\frac {11}{16}\leq a_4<\frac 34$.

 Then, $\frac 12(a_3+a_4)<\frac 58$ implying $a_3<\frac 54-a_4\leq \frac 54-\frac{11}{16}=\frac 9{16}$, and so
\[V(P, \ga_6\ii J_{(1, \infty)}) \geq  \min_{\set{a_3< \frac 9{16}<\frac {11}{16}\leq a_4}} \int_{J_2}\min_{a\in \set{a_3, a_4}}(x-a)^2 dP\geq \int_{J_2}(x-a(2))^2 dP=\frac{27}{57232}, \]
implying $V(P, \ga_6\ii J_{(1, \infty)}) \geq\frac{27}{57232}= 0.000471764>V(P, \ga_6\ii J_{(1, \infty)})$, which gives a contradiction.

 Case~2. $\frac 58 <a_4<\frac {11}{16}$.

 Then, $\frac 12(a_4+a_5)>\frac 34$ implying $a_5>\frac 32-a_4\geq \frac 32-\frac{11}{16}=\frac{13}{16}$. Then, the following two subcases can arise:

 Subcase~(i). $\frac {27}{32}\leq a_5$.

 Then, $S_{31}(1)=\frac{49}{64}=\frac 12(\frac {11}{16}+\frac {27}{32})<S_{32}(0)$, and so by Lemma~\ref{lemma141},
\begin{align*}& V(P, \ga_6\ii J_{(1, \infty)})\\
 &\geq \int_{J_{21}\uu J_{22}} (x-a(21, 22))^2 dP+ \int_{J_{(22, \infty)}}(x-\frac 58)^2 dP+\int_{J_{31}}(x-\frac {11}{16})^2dP +\int_{J_{32}}(x-\frac {27}{32})^2 dP\\
 &=\frac{236721}{334888960}=0.000706864>V(P, \ga_6\ii J_{(1, \infty)}),
\end{align*}
 which gives a contradiction.

Subcase~(ii). $\frac {13}{16}<a_5<\frac {27}{32}$.

 Then, $\frac 12(a_5+a_6)>\frac 78$ implying $a_6>\frac 74-a_5\geq \frac 74-\frac {27}{32}=\frac {29}{32}=S_4(1)$. First, assume that $S_4(1)<a_6<S_5(0)=\frac{15}{16}$. Then, using Lemma~\ref{lemma141},
 \begin{align*}& V(P, \ga_6\ii J_{(1, \infty)})\\
 &\geq \int_{J_{21}\uu J_{22}} (x-a(21, 22))^2 dP+ \int_{J_{(22, \infty)}}(x-\frac 58)^2 dP+\int_{J_{3}}(x-\frac {13}{16})^2dP +\int_{J_4}(x-\frac {29}{32})^2 dP\\
 &+\int_{J_5\uu J_6}(x-\frac{15}{16})^2dP=\frac{11529}{23920640}=0.000481969>V(P, \ga_6\ii J_{(1, \infty)}),
\end{align*}
 which leads to a contradiction. Next, assume that $S_5(0)=\frac {15}{16}\leq a_6$. Then, as $S_{42}(0)=\frac{57}{64}=\frac 12(\frac{27}{32}+\frac {15}{16})$, using Lemma~\ref{lemma141},  we have
  \begin{align*}& V(P, \ga_6\ii J_{(1, \infty)})\\
 &\geq \int_{J_{21}\uu J_{22}} (x-a(21, 22))^2 dP+ \int_{J_{(22, \infty)}}(x-\frac 58)^2 dP+\int_{J_{3}}(x-\frac {13}{16})^2dP +\int_{J_{41}}(x-\frac {27}{32})^2 dP\\
 &+\int_{J_{42}}(x-\frac{15}{16})^2dP=\frac{700899}{1339555840}=0.000523232>V(P, \ga_6\ii J_{(1, \infty)}),
\end{align*}
 which yields a contradiction.

Hence, by Case~1 and Case~2, we can assume that $a_4\leq \frac 58$ yielding $\te{card} (\ga_6\ii J_2)=2$. Thus, the proof of the proposition is complete.
\end{proof}

\begin{lemma} \label{lemma153}
Let $\ga_7$ be an optimal set of seven-means. Then, either $(i)$ $\te{card} (\ga_7\ii J_1)=3$ and $\te{card} (\ga_7\ii J_{(1, \infty)})=4$, or $(ii)$ $\te{card} (\ga_7\ii J_1)=2$ and $\te{card} (\ga_7\ii J_{(1, \infty)})=5$.
\end{lemma}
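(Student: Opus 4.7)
The plan is to follow the pattern of Lemma~\ref{lemma15}: first produce an explicit upper bound for $V_7$ by evaluating the distortion at a concrete seven-point test set, then invoke Proposition~\ref{prop1} to separate the optimal set across $J_1$ and $J_{(1,\infty)}$, and finally rule out all cardinality pairs except $(2,5)$ and $(3,4)$ by lower bounds that exceed that upper bound.

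For the upper bound I would take the test set
\[
\gb:=\set{a(11),\, a(11,\infty),\, a(21),\, a(21,\infty),\, a(3),\, a(4),\, a(4,\infty)},
\]
whose Voronoi regions cover $J_{11}$, $J_{(11,\infty)}$, $J_{21}$, $J_{(21,\infty)}$, $J_3$, $J_4$, and $J_{(4,\infty)}$ respectively. Applying Lemma~\ref{lemma4} to each region gives
\[
V_7 \leq \Big(1+\frac{43}{3}\Big)p_{11}s_{11}^2V + \Big(1+\frac{43}{3}\Big)p_{21}s_{21}^2V + p_3 s_3^2 V + \Big(1+\frac{43}{9}\Big)p_4 s_4^2 V = \frac{135}{261632}.
\]

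By Proposition~\ref{prop1}, writing $\ga_7=\set{a_1<a_2<\cdots<a_7}$, we have $\ga_7\ii J_1\neq\es$ and $\ga_7\ii J_{(1,\infty)}\neq\es$, and no Voronoi region of a point in one of these sets meets the other. Since $\te{card}(\ga_7)=7$, the stated dichotomy is equivalent to the two inequalities $\te{card}(\ga_7\ii J_1)\geq 2$ and $\te{card}(\ga_7\ii J_{(1,\infty)})\geq 4$. For the first: if $\te{card}(\ga_7\ii J_1)=1$, then the distortion from $J_1$ alone is at least $\int_{J_1}(x-a(1))^2\,dP = p_1 s_1^2 V = 9/7154$, which exceeds $135/261632$, a contradiction. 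For the second, I would rule out $k=1,2,3$ points in $J_{(1,\infty)}$ by the strategy of Lemma~\ref{lemma15}: the contribution from $J_{(1,\infty)}$ is bounded below by the optimal $k$-mean distortion for $P$ restricted there, which, by repeating the centroid/Voronoi case analysis of Lemmas~\ref{lemma11} and \ref{lemma12} in that restricted setting, is attained at $\set{a(1,\infty)}$, $\set{a(2),a(2,\infty)}$, and $\set{a(2),a(3),a(3,\infty)}$ respectively. The resulting lower bounds $\frac{43}{3}p_1 s_1^2 V$, $39/14308$, and $93/114464$ all strictly exceed $135/261632$, so each is a contradiction.

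The main obstacle is the $k=3$ lower bound on $J_{(1,\infty)}$: formally this requires the optimality of $\set{a(2),a(3),a(3,\infty)}$ for $P$ restricted to $J_{(1,\infty)}$. This is the same implicit ingredient that is invoked in Lemma~\ref{lemma15}, and it is checked by repeating the Voronoi-region case analysis of Lemma~\ref{lemma12} verbatim with $J_{(1,\infty)}$ in place of the full support. Once that optimality is in hand, the remaining numerical comparisons are routine and the dichotomy follows.
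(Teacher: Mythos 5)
Your approach is essentially the same as the paper's: establish the upper bound $V_7\leq \frac{135}{261632}$ with a seven-point test set, invoke Proposition~\ref{prop1} to split $\ga_7$ across $J_1$ and $J_{(1,\infty)}$, and then rule out the small cardinalities by lower-bound comparisons. Your test set $\set{a(11),a(11,\infty),a(21),a(21,\infty),a(3),a(4),a(4,\infty)}$ differs from the paper's $\set{a(11),a(12),a(12,\infty),a(21),a(21,\infty),a(3),a(3,\infty)}$ but happens to give the identical numerical value, so the downstream estimates go through unchanged. You are also right to flag that the bound used to exclude $\te{card}(\ga_7\ii J_{(1,\infty)})=3$ tacitly relies on $\set{a(2),a(3),a(3,\infty)}$ being optimal among all three-point quantizers for $P$ restricted to $J_{(1,\infty)}$; the paper does not verify this separately either (here nor in the proof of Lemma~\ref{lemma15}), so this is a gap you share with the source rather than one you introduce. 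One caveat on your proposed fix: redoing Lemma~\ref{lemma12} ``verbatim'' with $J_{(1,\infty)}$ in place of the full support is not quite literal, because the normalized restriction of $P$ to $J_{(1,\infty)}$ assigns masses $\tfrac12,\tfrac14,\tfrac18,\dots$ to $J_2,J_3,J_4,\dots$, a different weight profile than $P$ has on $J_1,J_2,J_3,\dots$; the Voronoi case boundaries and numerical thresholds therefore have to be recomputed for this geometry, not merely relabeled, though the structure of the argument carries over.
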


\begin{proof}
Let $\ga_7:=\set{0<a_1<a_2<\cdots<a_7<1}$ be an optimal set of seven-means. Consider the set of seven points $\gb:=\set{a(11), a(12), a(12, \infty), a(21), a(21, \infty), a(3), a(3, \infty)}$. Then, the distortion error due to the set $\gb$ is
\begin{align*}
\int\min_{a\in\gb}(x-a)^2 dP=p_{11}s_{11}^2V+ (1+\frac{43}{9})p_{12}s_{12}^2 V+(1+\frac{43}{3})p_{21}s_{21}^2 V+(1+\frac{43}{9})p_{3}s_{3}^2 V=\frac{135}{261632}.
\end{align*}
Since, $V_7$ is the quantization error for seven-means, we have $V_7\leq \frac{135}{261632}=0.000515992$. By Proposition~\ref{prop1}, we have $\te{card}(\ga_7\ii J_1)\geq 1$ and $\te{card}(\ga_7\ii J_{(1, \infty)})\geq 1$. Moreover, the Voronoi region of any point in $\ga_7\ii J_1$ does not contain any point from $J_{(1, \infty)}$ and the Voronoi region of any point in $\ga_7\ii J_{(1, \infty)}$ does not contain any point from $J_1$. Suppose that $\te{card} (\ga_7\ii J_{(1, \infty)})=2$, and then taking $\gb_2=\set{a(2), a(2, \infty)}$ we see that
\[V_7\geq \int_{J_2\uu J_{(2, \infty)}} \min_{a\in \gb_2}(x-a)^2 dP= \int_{J_2}(x-a(2))^2 dP+\int_{J_{(2, \infty)}}(x-a(2, \infty))^2 dP=\frac{39}{14308}=0.00272575,\]
i.e., $V_7\geq 0.00272575>V_7$, which yields a contradiction. Next, assume that $\te{card} (\ga_7\ii J_{(1, \infty)})=3$, and then taking $\gb_2=\set{a(2), a(3), a(3, \infty)}$, we see that
\[V_7\geq \int_{J_2}(x-a(2))^2 dP+\int_{J_3}(x-a(3))^2 dP+\int_{J_{(3, \infty)}}(x-a(3, \infty))^2 dP=\frac{93}{114464}=0.000812483>V_7,\]
which gives a contradiction. Thus, we can assume that  $\te{card} (\ga_7\ii J_{(1, \infty)})\geq 4$. If $\te{card} (\ga_7\ii J_1)=1$, then,
\[V_7\geq \int_{J_1}(x-a(1))^2 dP=\frac{9}{7154}=0.00125804>V_7,\]
which gives a contradiction. So, we can assume that $\te{card} (\ga_7\ii J_1)\geq 2$. Thus, we have either $(i)$ $\te{card} (\ga_7\ii J_1)=3$ and $\te{card} (\ga_7\ii J_{(1, \infty)})=4$, or $(ii)$ $\te{card} (\ga_7\ii J_1)=2$ and $\te{card} (\ga_7\ii J_{(1, \infty)})=5$, which is the lemma.
\end{proof}

\begin{lemma} \label{lemma1531}
Let $\ga_8$ be an optimal set of eight-means. Then, $\te{card} (\ga_8\ii J_1)=3$ and $\te{card} (\ga_8\ii J_{(1, \infty)})=5$.
\end{lemma}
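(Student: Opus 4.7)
The plan is to mirror the structure of Lemma~\ref{lemma15} and Lemma~\ref{lemma153}: establish $\te{card}(\ga_8\ii J_1)\ge 3$ and $\te{card}(\ga_8\ii J_{(1,\infty)})\ge 5$ separately, after which the total count $|\ga_8|=8=3+5$ forces equality in both. First I would fix a tight upper bound on $V_8$ by taking the trial set
\[\gb=\set{a(11),\;a(12),\;a(12,\infty),\;a(21),\;a(21,\infty),\;a(3),\;a(4),\;a(4,\infty)}.\]
A direct midpoint check shows that the Voronoi cells of $\gb$ align with $J_{11}$, $J_{12}$, $J_{(12,\infty)}$, $J_{21}$, $J_{(21,\infty)}$, $J_3$, $J_4$, $J_{(4,\infty)}$, so Lemma~\ref{lemma4} evaluates the resulting distortion as
\[p_{11}s_{11}^2V+(1+\tfrac{43}{9})p_{12}s_{12}^2V+(1+\tfrac{43}{3})p_{21}s_{21}^2V+p_3s_3^2V+(1+\tfrac{43}{9})p_4s_4^2V,\]
producing an explicit numerical upper bound $U$ for $V_8$.

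Next, Proposition~\ref{prop1} guarantees $\te{card}(\ga_8\ii J_1)\ge 1$ and $\te{card}(\ga_8\ii J_{(1,\infty)})\ge 1$, and forces the Voronoi regions to respect the $J_1$/$J_{(1,\infty)}$ split, so the distortion decomposes additively across this split. To rule out $\te{card}(\ga_8\ii J_{(1,\infty)})\in\set{1,2,3,4}$, for each $k$ I would produce a lower bound on the $k$-th quantization error of $P|_{J_{(1,\infty)}}$ and show it strictly exceeds $U$. The cases $k=1,2$ are immediate from Lemma~\ref{lemma4} and Corollary~\ref{cor2} applied to $P|_{J_{(1,\infty)}}$, which identifies the unique 2-point optimum there as $\set{a(2),a(2,\infty)}$. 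For $k=3$, partitioning $J_{(1,\infty)}=J_2\uu J_{(2,\infty)}$ and using Corollary~\ref{cor2} on $J_{(2,\infty)}$ to pin down its 2-point optimum, one argues the best 3-point configuration is $\set{a(2),a(3),a(3,\infty)}$. The case $k=4$ reuses the configuration $\set{a(21),a(21,\infty),a(3),a(3,\infty)}$ whose distortion was bounded below during the proof of Lemma~\ref{lemma15}, and comparing with $U$ completes this step.

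To obtain $\te{card}(\ga_8\ii J_1)\ge 3$, I would analogously rule out the cardinalities $1$ and $2$ in $J_1$. A single point in $J_1$ contributes at least $\int_{J_1}(x-a(1))^2\,dP=p_1s_1^2V$; two points in $J_1$ contribute, by Corollary~\ref{cor2}, at least the distortion of the unique pair $\set{a(11),a(11,\infty)}$, namely $(1+\tfrac{43}{3})p_{11}s_{11}^2V$. Both quantities strictly exceed $U$, forcing $\te{card}(\ga_8\ii J_1)\ge 3$. Combined with the previous step, the constraint $|\ga_8|=8$ yields the claimed split $(3,5)$.

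The main obstacle is the $k=4$ case: a clean lower bound on the $4$-th quantization error of $P|_{J_{(1,\infty)}}$ requires ruling out competing four-point distributions (for instance $\set{a(2),a(3),a(4),a(4,\infty)}$, or configurations that push a point into $J_{22}$ or some $J_{3j}$). The most economical route is to re-enact the case analysis from Lemma~\ref{lemma15}, since the quantity being bounded is essentially the same four-point distortion on $J_{(1,\infty)}$; only the comparison threshold changes from the six-means bound to the sharper $U$ derived here.
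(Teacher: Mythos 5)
Your proposal is correct and follows essentially the same route the paper takes: the same trial set $\gb$ yields the upper bound on $V_8$, the same split via Proposition~\ref{prop1}, and the same cardinality-by-cardinality elimination in $J_{(1,\infty)}$ (via $\set{a(2),a(2,\infty)}$, $\set{a(2),a(3),a(3,\infty)}$, $\set{a(21),a(21,\infty),a(3),a(3,\infty)}$) and in $J_1$ (via $a(1)$ and $\set{a(11),a(11,\infty)}$), followed by the counting argument $3+5=8$. The one place you are slightly more careful than the paper is in flagging that the $k=3,4$ lower bounds need justification as quantization errors of $P$ restricted to $J_{(1,\infty)}$; the paper simply plugs in these configurations and relies implicitly on the case analysis already carried out in Lemma~\ref{lemma15}.
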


\begin{proof} Let $\ga_8:=\set{0<a_1<a_2<\cdots<a_8<1}$ be an optimal set of eight-means. Consider the set of eight points $\gb:=\set{a(11), a(12), a(12, \infty), a(21), a(21, \infty), a(3), a(4), a(4, \infty)}$. Then, the distortion error due to the set $\gb$ is
\begin{align*}
&\int\min_{a\in\gb}(x-a)^2 dP\\
&=p_{11}s_{11}^2V+ (1+\frac{43}{9})p_{12}s_{12}^2 V+(1+\frac{43}{3})p_{21}s_{21}^2 V+p_3s_3^2V+(1+\frac{43}{9})p_{4}s_{4}^2 V=\frac{507}{1831424}.
\end{align*}
Since $V_8$ is the quantization error for eight-means, we have $V_8\leq \frac{507}{1831424}=0.000276834$. By Proposition~\ref{prop1}, we have $\te{card}(\ga_8\ii J_1)\geq 1$ and $\te{card}(\ga_8\ii J_{(1, \infty)})\geq 1$. Moreover, the Voronoi region of any point in $\ga_8\ii J_1$ does not contain any point from $J_{(1, \infty)}$ and the Voronoi region of any point in $\ga_8\ii J_{(1, \infty)}$ does not contain any point from $J_1$. Suppose that $\te{card} (\ga_8\ii J_{(1, \infty)})=2$, and then taking $\gb_2=\set{a(2), a(2, \infty)}$ we see that
\[V_8\geq \int_{J_2\uu J_{(2, \infty)}} \min_{a\in \gb_2}(x-a)^2 dP= \int_{J_2}(x-a(2))^2 dP+\int_{J_{(2, \infty)}}(x-a(2, \infty))^2 dP=\frac{39}{14308}=0.00272575,\]
i.e., $V_8\geq 0.00272575>V_8$, which yields a contradiction. Suppose that $\te{card} (\ga_8\ii J_{(1, \infty)})=3$, and then taking $\gb_3=\set{a(2), a(3), a(3, \infty)}$, we see that
\[V_8\geq \int_{J_2}(x-a(2))^2 dP+\int_{J_3}(x-a(3))^2 dP+\int_{J_{(3, \infty)}}(x-a(3, \infty))^2 dP=\frac{93}{114464}=0.000812483>V_8,\]
which gives a contradiction. Next, assume that $\te{card} (\ga_8\ii J_{(1, \infty)})=4$, and then taking \[\gb_4=\set{a(21), a(21, \infty), a(3), a(3, \infty)},\] we see that
\[V_8\geq (1+\frac{43}{3})p_{21}s_{21}^2 V+(1+\frac{43}{9})p_3s_3^3V=\frac{831}{1831424}=0.000453745>V_8,\]
which gives a contradiction. So, we can assume that  $\te{card} (\ga_8\ii J_{(1, \infty)})\geq 5$. If $\te{card} (\ga_8\ii J_1)=1$, then,
\[V_8\geq \int_{J_1}(x-a(1))^2 dP=\frac{9}{7154}=0.00125804>V_8,\]
which leads to a contradiction. If $\te{card} (\ga_8\ii J_1)=2$, then taking $\gb_2=\set{a(11), a(11, \infty)}$, we see that
\[V_8\geq \int_{J_1} \min_{a\in \gb_2}(x-a)^2 dP=(1+\frac{43}{3})p_{11}s_{11}^2 V=\frac{69}{228928}=0.000301405>V_8,\]
which is a contradiction.
So, we can assume that $\te{card} (\ga_8\ii J_1)\geq 3$. Since $\te{card} (\ga_8\ii J_1)\geq 3$ and  $\te{card} (\ga_8\ii J_{(1, \infty)})\geq 5$, we have  $\te{card} (\ga_8\ii J_1)=3$ and $\te{card} (\ga_8\ii J_{(1, \infty)})=5$, which is the lemma.
\end{proof}

\begin{prop} \label{prop2} Let $\ga_n$ be an optimal set of $n$-means for $P$ such that $\te{card}(\ga_n\ii J_{(k, \infty)})\geq 2$ for some $k\in \D N$ and $n\in \D N$. Then, $\ga_n\ii J_{k+1}\neq \es$, $\ga_n\ii J_{(k+1, \infty)}\neq \es$, and $\ga_n$ does not contain any point from the open interval $(S_{k+1}(1),  S_{k+2}(0))$. Moreover, the Voronoi region of any point in $\ga_n\ii J_{k+1}$ does not contain any point from $J_{(k+1, \infty)}$ and the Voronoi region of any point in $\ga_n\ii J_{(k+1, \infty)}$ does not contain any point from $J_{k+1}$.
\end{prop}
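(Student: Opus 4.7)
The plan is to mirror the proof of Proposition~\ref{prop1} at the self-similar level of $J_{(k,\infty)}$ instead of at the full interval $[0,1]$. The central benchmark is Corollary~\ref{cor2} applied to the word $\go=(k)$: the unique optimal set of two-means for $P$ restricted to $J_{(k,\infty)}$ is $\{a(k+1),\,a(k+1,\infty)\}$, whose total error, by Lemma~\ref{lemma4}, equals $B_k := \bigl(1+\tfrac{43}{9}\bigr)\,p_{k+1}\,s_{k+1}^{2}\,V$. A suitable competing $n$-point configuration, obtained by replacing two elements of $\ga_n\ii J_{(k,\infty)}$ with $\{a(k+1),\,a(k+1,\infty)\}$, serves as the benchmark against which suboptimal placements are eliminated.

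Write $\ga_n\ii J_{(k,\infty)}=\{b_1<b_2<\cdots<b_m\}$ with $m\ge 2$, and set $A:=\ga_n\setminus J_{(k,\infty)}$. The key geometric observation is that every element of $A$ lies at or below $\sup J_k=1-\tfrac{3}{2^{k+1}}$, while $\inf J_{k+1}=1-\tfrac{2}{2^{k+1}}$; hence, for any $x\in J_{(k,\infty)}$, the distance from $x$ to $A$ strictly exceeds the distance from $x$ to $\{a(k+1),a(k+1,\infty)\}$ and to the appropriate $b_i$. This localizes the Voronoi covering of $J_{(k,\infty)}$ entirely to the codewords inside $J_{(k,\infty)}$, for both $\ga_n$ and the competing configuration.

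The argument then proceeds in three sub-steps, in parallel with Lemmas~\ref{lemma13}--\ref{lemma15}. \emph{First}, at least one $b_i$ lies in each of $J_{k+1}$ and $J_{(k+1,\infty)}$: if every $b_i$ lay in $J_{k+1}$, so $b_m\le S_{k+1}(1)$, the parallel-axis identity gives
\[
\int_{J_{(k+1,\infty)}}(x-b_m)^2\,dP\;=\;\tfrac{43}{9}\,p_{k+1}\,s_{k+1}^{2}\,V+p_{(k+1,\infty)}\bigl(a(k+1,\infty)-b_m\bigr)^2,
\]
and the gap inequality $a(k+1,\infty)-b_m\ge S_{k+2}(0)-S_{k+1}(1)=1/2^{k+2}$ supplies an excess term of order $p_{k+1}/4^{k+2}$; since $V=288/3577<1$, this excess alone already exceeds $p_{k+1}s_{k+1}^{2}V$, so the error on $J_{(k+1,\infty)}$ exceeds $B_k$, contradicting optimality of $\ga_n$ against the competitor. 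The reverse extreme is symmetric. \emph{Second}, no $b_i$ lies in the gap $(S_{k+1}(1),\,S_{k+2}(0))$: if some $b_i$ did, the centroid property (Proposition~\ref{prop0}(iii)) would force $M(b_i|\ga_n)$ to contain mass from both $J_{k+1}$ and $J_{(k+1,\infty)}$, for otherwise the centroid would lie on one side and not in the gap; a sharp comparison between the resulting one-point straddling cover and the two-point cover $\{a(k+1),a(k+1,\infty)\}$, calibrated via $B_k$, then yields a strict decrease in $V_n$ for the competitor, contradicting optimality. \emph{Third}, once these two facts are established, the midpoint between the rightmost element of $\ga_n\ii J_{k+1}$ and the leftmost element of $\ga_n\ii J_{(k+1,\infty)}$ must lie in the gap, yielding the required Voronoi separation.

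The main obstacle will be the second sub-step. Because $b_i$ is automatically at the centroid of $M(b_i|\ga_n)$ by optimality of $\ga_n$, no purely local perturbation of $b_i$ decreases the error, so one must perform a sharp global comparison against the benchmark $B_k$, analogous to the calibrated test-set computations carried out repeatedly in Lemmas~\ref{lemma13}, \ref{lemma14}, and \ref{lemma15}, now rescaled by the factors $p_{k+1}$ and $s_{k+1}^{2}$ and governed by the fixed gap width $1/2^{k+2}$.
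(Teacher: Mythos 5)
Your proposal takes a genuinely different route from the paper: the paper reduces to $k=1$ by self-similarity and then dispatches $n=3,\dots,8$ through the explicit Lemmas~\ref{lemma13}--\ref{lemma1531} and handles $n\ge 9$ via a $6$-point benchmark inside $J_{(1,\infty)}$ with further sub-cases on the position of the straddling index $j$, whereas you attempt a single unified argument calibrated against the $2$-point benchmark $B_k$. That conceptual shift is not a harmless simplification; it leaves a genuine gap.

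The crucial unresolved step is your sub-step two. You frankly call it the ``main obstacle,'' but it is worse than an obstacle: the $2$-point benchmark is structurally the wrong yardstick once $m:=\te{card}(\ga_n\ii J_{(k,\infty)})>2$. The error contributed by $\ga_n\ii J_{(k,\infty)}$ on $J_{(k,\infty)}$ need not exceed $B_k$, because with $m>2$ points one can have an $m$-point error strictly smaller than the optimal $2$-point error; so the proposed competitor (replace two elements with $\{a(k+1),a(k+1,\infty)\}$) is not guaranteed to lower the error, and ``calibrated via $B_k$'' does not produce the claimed contradiction. Consequently gap points are not excluded, and since your sub-step one as written only rules out the two extremes (all $b_i$ in $J_{k+1}$, all $b_i$ in $J_{(k+1,\infty)}$), without sub-step two you cannot conclude ``at least one $b_i$ in each.'' Moreover, the ``reverse extreme is symmetric'' claim conceals a real asymmetry in the numbers: the relevant one-sided margins are $S_{k+1}(1)-a(k+1)=\tfrac37 s_{k+1}$ on the left versus $a(k+1,\infty)-S_{k+2}(0)=\tfrac67 s_{k+1}$ on the right, while the Lemma~\ref{lemma4} weights are $1$ and $\tfrac{43}{9}$. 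In particular, if no $b_i$ lies in $J_{k+1}$ but the leftmost sits in the gap, the excess on $J_{k+1}$ is only $\ge\tfrac{9}{49}p_{k+1}s_{k+1}^2$, and
\[
\tfrac{9}{49}\approx 0.184\;<\;\tfrac{43}{9}V=\tfrac{43}{9}\cdot\tfrac{288}{3577}\approx 0.385,
\]
so the error on $J_{k+1}$ alone does \emph{not} exceed $B_k$ and your argument stalls exactly at the point where the paper deploys its finer benchmarks and extended case analysis. To salvage the approach you would need either an $m$-dependent benchmark (as the paper effectively uses for $n\ge 9$) or a direct perturbation lemma comparable in strength to Lemma~\ref{lemma16}, neither of which your sketch supplies.
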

\begin{proof} By Proposition~\ref{prop1}, since $\ga_n$ does not contain any point from $(\frac 14, \frac 12)$, the Voronoi region of any point in $\ga_n\ii J_1$ does not contain any point from $J_{(1, \infty)}$, and the Voronoi region of any point in $\ga_n\ii J_{(1, \infty)}$ does not contain any point from $J_1$, to prove the proposition it is enough to prove it for $k=1$, and then inductively the proposition will follow for all $k\geq 2$. Fix $k=1$. By Lemma~\ref{lemma12}, it is clear that the proposition is true for $n=3$. Let $\ga_4:=\set{0<a_1<a_2<a_3<a_4<1}$ be an optimal set of four-means. In the proof of Lemma~\ref{lemma13}, we have seen that $\frac 12\leq a_2$ yielding $\ga_4\ii J_{(1, \infty)}=\set{\frac 12\leq a_2<a_3<a_4<1}$, i.e., $\te{card}(\ga_4\ii J_{(1, \infty)})=3\geq 2$. We now prove the proposition for $n=4$. Let $V(P, \ga_4\ii J_{(1, \infty)})$ be the quantization error contributed by the set $\ga_4\ii J_{(1, \infty)}$. The distortion error due to the set $\gb:=\set{a(2), a(3), a(3, \infty)}$ of three points on $J_{(1, \infty)}$ is given by
\[\int_{J_{(1, \infty)}} \min_{a\in \gb}(x-a)^2 dP=p_2s_2^2V+(1+\frac {43}{9})p_3s_3^2V=\frac{93}{114464}=0.000812483,\]
and so $V(P, \ga_4\ii J_{(1, \infty)})\leq 0.000812483$. If $a_2\geq \frac {39}{64}=S_{24}(0)$, then
\[V(P, \ga_4\ii J_{(1, \infty)})\geq \int_{J_{21}\uu J_{22}\uu J_{23}}(x-\frac {39}{64})^2 dP=\frac{269769}{267911168}=0.00100693>V(P, \ga_4\ii J_{(1, \infty)}),\]
which is a contradiction. So, we can assume that $a_2< \frac {39}{64}$. Suppose that $a_3\leq \frac 57$. Then, as $S_{3}(1)=\frac{13}{16}<\frac 12(\frac 57+a(3, \infty))<\frac 78$, we have
\[V(P, \ga_4\ii J_{(1, \infty)})\geq \int_{J_{3}}(x-\frac 57)^2 dP+\int_{J_{(3, \infty)}}(x-a(3, \infty))^2 dP =\frac{297}{228928}=0.00129735\]
implying
$V(P, \ga_4\ii J_{(1, \infty)})\geq 0.00129735>V(P, \ga_4\ii J_{(1, \infty)})$, which is a contradiction. Next, suppose that $\frac 57\leq a_3\leq \frac 34$. Then, as $S_2(1)<\frac 12(a(2)+\frac 57)$ and $S_3(1)<\frac 12(\frac 34+a(3, \infty))<\frac 78=S_4(0)$, we have
\[V(P, \ga_4\ii J_{(1, \infty)})\geq \int_{J_{2}}(x-a(2))^2 dP+\int_{J_3}(x-\frac 34)^2 dP+\int_{J_{(3, \infty)}}(x-a(3, \infty))^2 dP =\frac{963}{915712}\]
yielding $V(P, \ga_4\ii J_{(1, \infty)})\geq\frac{963}{915712}= 0.00105164>V(P, \ga_4\ii J_{(1, \infty)}),$
which gives a contradiction. Thus, we have $\frac 34<a_3$. Since $a_2\leq \frac {39}{64}<\frac 58$ and $\frac 34<a_3$, the set $\ga_4\ii J_{(1, \infty)}$ does not contain any point from the open interval $(S_{2}(1), S_3(0))$.  Since $\frac 12(a_2+a_3)\geq \frac 12(\frac 12+\frac 34)=\frac 58=S_{2}(1)$, the Voronoi region of any point in $\ga_4\ii J_{(2, \infty)}$ does not contain any point from $J_2$. Suppose that the Voronoi region of any point in $\ga_4\ii J_2$ contains points from $J_{(2, \infty)}$. Then, $\frac 12(a_2+a_3)>\frac 34$ implying $a_3>\frac 32-a_2\geq \frac 32-\frac{39}{64}=\frac{57}{64}$, and so
\[V_4\geq \int_{J_3}(x-\frac{57}{64})^2 dP=\frac{10155}{4784128}=0.00212264>V_4,\]
which leads to a contradiction. Hence, the Voronoi region of any point in $\ga_4\ii J_2$ does not contain any point from $J_{(2, \infty)}$. Thus, the proposition is true for $n=4$. From the proof of Lemma~\ref{lemma14}, we see that if $\ga_5=\set{0<a_1<a_2<a_3<a_4<a_5<1}$ is an optimal set of five-means, then $\ga_5\ii J_{(1, \infty)}=\set{\frac 12\leq a_3<a_4<a_5<1}$. Thus, the proof of the proposition for $n=5$ follows exactly in the similar ways as the proof for $n=4$ given above.

Now, we prove the proposition for $n=6$. Let $\ga_6:=\set{0<a_1<a_2<a_3<a_4<a_5<a_6<1}$ be an optimal set of six-means. Then, by Lemma~\ref{lemma15}, we know that $\te{card} (\ga_6\ii J_2)=2$, and $\te{card} (\ga_6\ii J_{(1, \infty)})=4$. Thus, we see that $\ga_6\ii J_2=\set{a_3, a_4}\neq \es$ and $\ga_6\ii J_{(2, \infty)}=\set{a_5, a_6}\neq \es$. As shown in the proof of Lemma~\ref{lemma15},  we have $\ga_6\ii J_{(1, \infty)}=\set{\frac 12\leq a_3<a_4< a_5< a_6<1}$, and if $V(P, \ga_6\ii J_{(1, \infty)})$ is the quantization error contributed by the set $\ga_6\ii J_{(1, \infty)}$ in the region $J_{(1, \infty)}$, then we have
 $V(P, \ga_6\ii J_{(1, \infty)})\leq 0.000453745$. We now show that the Voronoi region of any point in $\ga_6\ii J_2$ does not contain any point from $J_{(2, \infty)}$. If it does, then we must have $\frac 12(a_4+a_5)>\frac 34$ implying $a_5>\frac 32-a_4\geq \frac 32-\frac 58=\frac 78$, and so
 \[V(P, \ga_6\ii J_{(1, \infty)})\geq \int_{J_3}(x-\frac 78)^2 dP=\frac{813}{523264}=0.00155371>V(P, \ga_6\ii J_{(1, \infty)}),\]
 which is a contradiction. Also, notice that the Voronoi region of any element from $\ga_6\ii J_{(2, \infty)}$ does not contain any point from $J_2$, if it does we must have $\frac 12(a_4+a_5)<\frac 58$ implying $a_4<\frac 54-a_5\leq \frac 54-\frac34=\frac 12$, which is a contradiction as $\frac 12\leq a_3<a_4$.

 Now, we prove the proposition for $n=7$.
Let $\ga_7:=\set{0<a_1<a_2<\cdots<a_7<1}$ be an optimal set of seven-means. By Lemma~\ref{lemma153}, first assume that $\te{card}(\ga_7\ii J_{(1,\infty)})=4$, i.e., $\frac 12\leq a_4$. Let $V(P, \ga_7\ii J_{(1, \infty)})$ be the quantization error contributed by the set $\ga_7\ii J_{(1, \infty)}$ in the region $J_{(1, \infty)}$. Let $\gb:=\set{a(11), a(12), a(12, \infty), a(21), a(21, \infty), a(3), a(3, \infty)}$. The distortion error due to the set $\gb\ii J_{(1, \infty)}:=\set{a(21), a(21, \infty), a(3), a(3, \infty)}$ is given by
\[\int_{J_{(1, \infty)}}\min_{a \in \gb\ii J_{(1, \infty)}}(x-a)^2dP=(1+\frac {43}{3})p_{21}s_{21}^2 V+(1+\frac {43}{9})p_3s_3^2 V=\frac{831}{1831424}=0.000453745,\]
and so $V(P, \ga_7\ii J_{(1, \infty)})\leq 0.000453745$. If $a_4\geq \frac{77}{128}=S_{23}(1)$, then
\[V(P, \ga_7\ii J_{(1, \infty)})\geq \int_{J_{21}\uu J_{22}\uu J_{23}}(x-\frac{77}{128})^2 dP=\frac{852849}{1071644672}=0.000795832>V(P, \ga_7\ii J_{(1, \infty)}),\]
which is a contradiction. So, we can assume that $a_4< \frac{77}{128}=S_{23}(1)$. Suppose that $\frac {11}{16}\leq a_5$. Then, as $\frac 12(a(2)+a_5)\geq \frac{1}{2} (\frac{4}{7}+\frac{11}{16})>\frac 58$, we have
\[V(P, \ga_7\ii J_{(1, \infty)})\geq \int_{J_2}(x-a(2))^2 dP=\frac{27}{57232}=0.000471764>V(P, \ga_7\ii J_{(1, \infty)}),\]
which leads to a contradiction. So, we can assume that $a_5\leq \frac{11}{16}$. Suppose that $\frac 58<a_5\leq \frac {11}{16}$. Then,  $\frac 12(a_5+a_6)>\frac 34$ implying $a_6>\frac 32-a_6\geq \frac 32-\frac {11}{16}=\frac{13}{16}=S_3(1)$. Then, the following two cases can arise:

 Case~(i). $\frac {27}{32}\leq a_6$.

 Then, $S_{31}(1)=\frac{49}{64}=\frac 12(\frac {11}{16}+\frac {27}{32})<S_{32}(0)$, and so by Lemma~\ref{lemma141},
\begin{align*}& V(P, \ga_7\ii J_{(1, \infty)})\\
 &\geq \int_{J_{21}\uu J_{22}} (x-a(21, 22))^2 dP+ \int_{J_{(22, \infty)}}(x-\frac 58)^2 dP+\int_{J_{31}}(x-\frac {11}{16})^2dP +\int_{J_{32}}(x-\frac {27}{32})^2 dP\\
 &=\frac{236721}{334888960}=0.000706864>V(P, \ga_7\ii J_{(1, \infty)}),
\end{align*}
 which gives a contradiction.

Case~(ii). $\frac {13}{16}<a_6<\frac {27}{32}$.

 Then, $\frac 12(a_6+a_7)>\frac 78$ implying $a_7>\frac 74-a_6\geq \frac 74-\frac {27}{32}=\frac {29}{32}=S_4(1)$. First, assume that $S_4(1)<a_7<S_5(0)=\frac{15}{16}$. Then, using Lemma~\ref{lemma141},
 \begin{align*}& V(P, \ga_7\ii J_{(1, \infty)})\\
 &\geq \int_{J_{21}\uu J_{22}} (x-a(21, 22))^2 dP+ \int_{J_{(22, \infty)}}(x-\frac 58)^2 dP+\int_{J_{3}}(x-\frac {13}{16})^2 dP+\int_{J_4}(x-\frac {29}{32})^2 dP\\
 &+\int_{J_5\uu J_6}(x-\frac{15}{16})^2dP=\frac{11529}{23920640}=0.000481969>V(P, \ga_7\ii J_{(1, \infty)}),
\end{align*}
which leads to a contradiction. Next, assume that $S_5(0)=\frac {15}{16}\leq a_7$. Then, as $S_{42}(0)=\frac{57}{64}=\frac 12(\frac{27}{32}+\frac {15}{16})$, using Lemma~\ref{lemma141},  we have
  \begin{align*}& V(P, \ga_7\ii J_{(1, \infty)})\\
 &\geq \int_{J_{21}\uu J_{22}} (x-a(21, 22))^2 dP+ \int_{J_{(22, \infty)}}(x-\frac 58)^2 dP+\int_{J_{3}}(x-\frac {13}{16})^2dP +\int_{J_{41}}(x-\frac {27}{32})^2 dP\\
 &+\int_{J_{42}}(x-\frac{15}{16})^2dP=\frac{700899}{1339555840}=0.000523232>V(P, \ga_7\ii J_{(1, \infty)}),
\end{align*}
 which yields a contradiction.

 Hence, by Case~(i) and Case~(ii), we can assume that $a_5\leq \frac 58$. If $a_6\leq \frac 34$, then as $\frac{13}{16}=S_{3}(1)=\frac 12(\frac 34+\frac 78)<\frac 12(\frac 34+a(3, \infty))=\frac 12(\frac 34+\frac {13}{14})<\frac 78$, we have
 \[V_7\geq \int_{J_3}(x-\frac 34)^2 dP+\int_{J_{(3, \infty)}}(x-a(3, \infty))dP=\frac{531}{915712}=0.000579877>V_7,\]
 which leads to a contradiction. So, we can assume that $\frac 34<a_6$. Thus, it is proved that $\ga_7\ii J_2\neq \es$, $\ga_7\ii J_{(2, \infty)}\neq \es$, and $\ga_7$ does not contain any point from the open interval $(S_2(1), S_3(0))$. Since $\frac 12(a_5+a_6)\geq \frac 12(\frac 12+\frac 34)=\frac 58$, the Voronoi region of any point in $\ga_7\ii J_{(2, \infty)}$ does not contain any point from $J_2$. If the Voronoi region of any point in $\ga_7\ii J_2$ contains points from $J_{(2, \infty)}$, we must have $\frac 12(a_5+a_6)>\frac 34$ implying $a_6>\frac 32-a_5\geq \frac 32-\frac 58=\frac 78$, and so
 \[V(P, \ga_7\ii J_{(1, \infty)})\geq \int_{J_3}(x-\frac{7}{8})^2dP=\frac{813}{523264}=0.00155371>V(P, \ga_7\ii J_{(1, \infty)}),\]
which is a contradiction. Thus, the Voronoi region of any point in $\ga_7\ii J_2$ does not contain any point from $J_{(2, \infty)}$ as well.

If we assume $\te{card}(\ga_7\ii J_{(1, \infty)})=5$, with the help of Lemma~\ref{lemma153}, similarly we can prove that the proposition is true. Notice that if we take $n=8$, then by Lemma~\ref{lemma1531}, we have $\te{card}(\ga_8\ii J_{(1, \infty)})=5$. Thus, the proof of the proposition for the case $n=8$ is exactly same as the proof of the proposition for $n=7$ with $\te{card}(\ga_7\ii J_{(1, \infty)})=5$.

Now, we prove the proposition for any $n\geq 9$.
Let $\ga_n:=\set{0<a_1<a_2<\cdots<a_n<1}$ be an optimal set of $n$-means for any $n\geq 9$ such that $\te{card}(\ga_n\ii J_{(1,\infty)})\geq 2$. Let $V(P, \ga_n\ii J_{(1, \infty)})$ be the quantization error contributed by the set $\ga_n\ii J_{(1, \infty)}$ in the region $J_{(1, \infty)}$. Let $\gb:=\set{a(11), a(12), a(12, \infty), a(21), a(22), a(22, \infty), a(3), a(4), a(4, \infty)}$. The distortion error due to the set $\gb\ii J_{(1, \infty)}:=\set{a(21), a(22), a(22, \infty), a(3), a(4), a(4, \infty)}$ is given by
\[\int_{J_{(1, \infty)}}\min_{a \in \gb\ii J_{(1, \infty)}}(x-a)^2dP=p_{21}s_{21}^2V+ (1+\frac {43}{9})p_{22}s_{22}^2 V+p_3s_3^2V +(1+\frac {43}{9})p_4s_4^2 V=\frac{915}{7325696},\]
and so $V(P, \ga_n\ii J_{(1, \infty)})\leq \frac{915}{7325696}=0.000124903$. Suppose that $\ga_n$ does not contain any point from $J_2$. Since by Proposition~\ref{prop1}, the Voronoi region of any point in $\ga_n\ii J_1$ does not contain any point from $J_{(1, \infty)}$, we have
 \[V(P, \ga_n\ii J_{(1, \infty)})\geq \int_{J_2}(x-\frac 58)^2 dP=\frac{405}{261632}=0.00154798>V(P, \ga_n\ii J_{(1, \infty)}),\]
 which leads to a contradiction. So, we can assume that $\ga_n\ii J_2\neq \es$.
Let $j:=\max\set{i : a_i \leq \frac 58 \te{ for all } 1\leq i\leq n}$, and so $a_j\leq \frac 58$.
We now show that $a_{j+1}\geq \frac 34$. Suppose that $\frac 58<a_{j+1}<\frac 34$.
 Then, the following two cases can arise:

Case~1. $\frac 58<a_{j+1}\leq \frac {11}{16}.$

Then, $\frac 12(a_{j+1}+a_{j+2})>\frac 34$ implying $a_{j+2}>\frac 32-a_{j+1}\geq \frac 32-\frac {11}{16}=\frac {13}{16}$, and so
\[V(P, \ga_n\ii J_{(1, \infty)})\geq \int_{J_3}(x-\frac {13}{16})^2 dP=\frac{405}{2093056}=0.000193497>V(P, \ga_n\ii J_{(1, \infty)}),\]
which is contradiction.

Case~2. $\frac {11}{16} \leq a_{j+1}< \frac 34.$

Then, $\frac 12(a_{j}+a_{j+1})<\frac 58$ implying $a_{j}<\frac 54-a_{j+1}\leq \frac 54-\frac {11}{16}=\frac 9{16}=S_{22}(0)$, and so
\[V(P, \ga_n\ii J_{(1, \infty)})\geq \int_{J_{22}\uu J_{23}\uu J_{24}}(x-\frac 9{16})^2 dP=\frac{99}{524288}=0.000188828>V(P, \ga_n\ii J_{(1, \infty)}),\]
which gives a contradiction.

Thus, we have proved that $\ga_n\ii J_{2}\neq \es$, $\ga_n\ii J_{(2, \infty)}\neq \es$, and $\ga_n$ does not contain any point from the open interval $(S_{2}(1),  S_{3}(0))$. Since $\frac 12(a_j+a_{j+1})\geq \frac 12(\frac 12+\frac 34)=\frac {5}{8}$, the Voronoi region of any point in $\ga_n\ii J_{(2, \infty)}$ does not contain any point from $J_2$. If the Voronoi region of any point in $\ga_n\ii J_2$ contains points from $J_{(2, \infty)}$, we must have $\frac 12(a_j+a_{j+1})>\frac 34$ implying $a_{j+1}>\frac 32-a_j\geq \frac 32-\frac 58=\frac{7}{8}$, and so
\[V(P, \ga_n\ii J_{(1, \infty)})\geq \int_{J_3}(x-\frac{7}{8})^2dP=\frac{813}{523264}=0.00155371>V(P, \ga_n\ii J_{(1, \infty)}),\]
which is a contradiction. Hence, the Voronoi region of any point in $\ga_n\ii J_2$ does not contain any point from $J_{(2, \infty)}$. Thus, the proof of the proposition is complete.
\end{proof}

\begin{prop}\label{prop3}
Let $\ga_n$ be an optimal set of $n$-means for $n\geq 2$. Then, there exists a positive integer $k$ such that  $\ga_n\ii J_j \neq \es$ for all $1\leq j\leq k$, and $\te{card}(\ga_n\ii J_{(k, \infty)})=1$. Moreover, if $n_j:=\te{card}(\ga_j)$, where $\ga_j:=\ga_n\ii J_j$, then $n=\sum_{j=1}^k n_j+1$, with
\[V_n=\left\{\begin{array} {ll}
p_1s_1^2 V+\frac{43}{3} p_1s_1^2 V \te{ if } k=1, & \\
\mathop\sum\limits_{j=1}^k p_js_j^2 V_{n_j}+\frac{43}{9} p_k s_k^2 V \te{ if } k\geq 2. &
\end{array} \right.\]
\end{prop}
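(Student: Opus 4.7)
The strategy is to apply Propositions~\ref{prop1} and~\ref{prop2} iteratively to build the integer $k$, and then to translate the resulting block structure of the Voronoi partition into both the count $n=\sum n_j+1$ and the quantization error formula.

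First I would set up the iteration. Proposition~\ref{prop1} gives the base decomposition
\[
\ga_n=(\ga_n\cap J_1)\cup(\ga_n\cap J_{(1,\infty)}),
\]
with both pieces nonempty and with no Voronoi cell of $\ga_n$ crossing between $J_1$ and $J_{(1,\infty)}$. If $\te{card}(\ga_n\cap J_{(1,\infty)})=1$, stop and set $k=1$. Otherwise Proposition~\ref{prop2} applied with $k=1$ gives $\ga_n\cap J_2\neq\es$ and $\ga_n\cap J_{(2,\infty)}\neq\es$ together with the analogous non-crossing of Voronoi cells, and the same dichotomy is then applied to $\ga_n\cap J_{(2,\infty)}$. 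Since $|\ga_n|=n<\infty$, this process must terminate at a smallest $k\geq 1$ for which $\te{card}(\ga_n\cap J_{(k,\infty)})=1$; at that stage $\ga_n\cap J_j\neq\es$ for every $1\leq j\leq k$, so $n=\sum_{j=1}^k n_j+1$. Moreover, the unique point of $\ga_n\cap J_{(k,\infty)}$ has Voronoi region containing $J_{(k,\infty)}$, so by Proposition~\ref{prop0}(iii) it must equal $E(X\mid X\in J_{(k,\infty)})=a(k,\infty)$.

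Next I would use the block structure to compute $V_n$. Because no Voronoi cell of $\ga_n$ crosses between $J_j$ and $J_{(j,\infty)}$ for any $j\leq k$ and the sets $J_1,\dots,J_k,J_{(k,\infty)}$ are essentially disjoint,
\[
V_n=\sum_{j=1}^k\int_{J_j}\min_{a\in\ga_n\cap J_j}(x-a)^2\,dP+\int_{J_{(k,\infty)}}(x-a(k,\infty))^2\,dP.
\]
The last integral is $E(a(k,\infty))$ in the notation of \eqref{eq43}, so Lemma~\ref{lemma4} gives it as $\tfrac{43}{9}p_ks_k^2V$ when $k\geq 2$ and as $\tfrac{43}{3}p_1s_1^2V$ when $k=1$. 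For each of the remaining summands, Lemma~\ref{lemma1} applied with word length $1$ yields $\int_{J_j}f\,dP=p_j\int f\circ S_j\,dP$ for Borel $f$; taking $f(x)=\min_{a\in\ga_n\cap J_j}(x-a)^2$ and using that $S_j$ is a similitude of ratio $s_j$ converts the $j$-th summand into $p_js_j^2$ times the quantization integral for the rescaled set $S_j^{-1}(\ga_n\cap J_j)$ against $P$ itself.

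The main obstacle, and the final step, is to identify this rescaled integral with $V_{n_j}$. This amounts to showing that $\ga_n\cap J_j$ is itself an optimal $n_j$-means set for $P$ restricted to $J_j$: if it were not, replacing $\ga_n\cap J_j$ by a strictly better $n_j$-point configuration in $J_j$ would, thanks to the non-crossing of Voronoi cells guaranteed by Propositions~\ref{prop1} and~\ref{prop2}, strictly decrease the $j$-th summand without affecting the others, contradicting the optimality of $\ga_n$. Once this swap argument is in hand, self-similarity identifies $V_{n_j}$ with the rescaled quantization integral and produces the sum $\sum_{j=1}^k p_js_j^2 V_{n_j}$. Assembling the two pieces delivers the claimed formula; the $k=1$ case is then consistent because then necessarily $n_1=1$ and $V_{n_1}=V$.
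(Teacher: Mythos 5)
Your proposal is correct and follows essentially the same route as the paper's proof: iterate Propositions~\ref{prop1} and~\ref{prop2} to produce $k$ and the block decomposition $\ga_n=\bigcup_{j=1}^k\ga_j\cup\{a(k,\infty)\}$, apply Lemma~\ref{lemma1} together with the similitude scaling to pull each $J_j$-integral back to $P$, and use the swap (exchange) argument to identify $S_j^{-1}(\ga_j)$ as an optimal set of $n_j$-means so that the $j$-th summand becomes $p_js_j^2V_{n_j}$. The only cosmetic differences are that you invoke Proposition~\ref{prop0}(iii) explicitly to pin down $a(k,\infty)$ and you note the $k=1$ specialization, both of which the paper leaves implicit.
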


\begin{proof} Proposition~\ref{prop1} says that if $\ga_n$ is an optimal set of $n$-means for $n\geq 2$, then $\ga_n\ii J_1\neq \es$, $\ga_n\ii J_{(1, \infty)}\neq \es$, and $\ga_n$ does not contain any point from the open interval $(S_1(1), S_2(0))$. Proposition~\ref{prop2} says that if $\te{card}(\ga_n\ii J_{(k, \infty)})\geq 2$ for some $k\in \D N$, then $\ga_n\ii J_{k+1}\neq \es$ and $\ga_n\ii J_{(k+1, \infty)}\neq \es$. Moreover, $\ga_n$ does not take any point from the open interval $(S_{k+1}(1), S_{k+2}(0))$. Thus, by Induction Principle, we can say that if $\ga_n$ is an optimal set of $n$-means for $n\geq 2$, then there exists a positive integer $k$ such that $\ga_n\ii J_j\neq \es$ for all $1\leq j\leq k$ and $\te{card}(\ga_n\ii J_{(k, \infty)})=1$.

For a given $n\geq 2$, write $\ga_j:=\ga_n\ii J_j$ and $n_j:=\te{card}(\ga_j)$. Since $\ga_j$ are disjoints for $1\leq j\leq k$, and $\ga_n$ does not contain any point from the open intervals $(S_{\ell}(1), S_{\ell+1}(0))$ for $1\leq \ell\leq k$, we have $\ga_n=\mathop{\uu}\limits_{j=1}^k\ga_j\uu \set{a(k, \infty)}$ and $n=n_1+n_2+\cdots +n_k+1$. Then, using Lemma~\ref{lemma1}, we deduce
\begin{align*} &V_n=\int \min_{a \in \ga_n}  \|x-a\|^2 dP=\sum_{j=1}^{k} \int_{J_j}\min_{a \in \ga_j} (x-a)^2 dP+\int_{J_{(k, \infty)}}(x-a(k, \infty))^2 dP\\
&=\sum_{j=1}^{k} p_j\int\min_{a \in \ga_j} (x-a)^2 d(P\circ S_j^{-1})+\int_{J_{(k, \infty)}}(x-a(k, \infty))^2 dP,
\end{align*}
which yields
\begin{equation} \label{eq33} V_n=\sum_{j=1}^{k}p_js_j^2\int\min_{a \in S_j^{-1}(\ga_j)} (x-a)^2 dP+\frac{43}{9} p_k s_k^2 V.\end{equation}
We now show that $S_j^{-1}(\ga_j)$ is an optimal set of $n_j$-means, where $1\leq j\leq k$. If $S_j^{-1}(\ga_j)$ is not an optimal set of $n_j$-means, then we can find a set $\gb \sci \D R$ with $\te{card}(\gb)=n_j$ such that $\int\mathop{\min}\limits_{b\in \gb} (x-b)^2 dP<\int \mathop{\min}\limits_{a\in S_j^{-1}(\ga_j)}(x-a)^2 dP$. But, then $S_j(\gb) \uu (\ga_n\setminus \ga_j)$ is a set of cardinality $n$ such that
\[\int\mathop{\min}\limits_{a\in S_j(\gb) \uu (\ga_n\setminus \ga_j)}(x-a)^2 dP<\int\mathop{\min}\limits_{a\in \ga_n} (x-a)^2 dP,\] which contradicts the optimality of $\ga_n$. Thus, $S_j^{-1}(\ga_j)$ is an optimal set of $n_j$-means for $1\leq j\leq k$. Hence, by \eqref{eq33} we have
\[V_n=\sum_{j=1}^{k}p_js_j^2V_{n_j}+\frac{43}{9} p_k s_k^2 V.\]
Thus, the proof of the proposition is yielded.
\end{proof}

We need the following lemma to prove the main theorem Theorem~\ref{Th1} of the paper.

\begin{lemma} \label{lemma16}
For any $\go \in \D N^k$, $k\geq 1$, let $E(a(\go))$ and $E(a(\go, \infty))$ be given by \eqref{eq43}. Then, for $\go, \gt \in \D N^k$, $k\geq 1$, we have

$(i)$ $E(a(\go))> E(a(\gt))$ if and only if $E(a(\go1))+E(a(\go 1, \infty))+E(a(\gt))< E(a(\go))+E(a(\gt1))+E(a(\gt 1, \infty))$;

$(ii)$ $E(a(\go))> E(a(\gt, \infty))$ if and only if $E(a(\go1))+E(a(\go 1, \infty))+E(a(\gt, \infty))< E(a(\go))+E(a(\gt^-(\gt_{|\gt|}+1)))+E(a(\gt^-(\gt_{|\gt|}+1), \infty))$;

$(iii)$  $E(a(\go, \infty))> E(a(\gt))$ if and only if
$E(a(\go^-(\go_{|\go|}+1)))+E(a(\go^-(\go_{|\go|}+1), \infty))+E(a(\gt))< E(a(\go, \infty))+E(a(\gt1))+E(a(\gt 1, \infty))$;

$(iv)$  $E(a(\go, \infty))> E(a(\gt, \infty))$ if and only if
$E(a(\go^-(\go_{|\go|}+1)))+E(a(\go^-(\go_{|\go|}+1), \infty))+E(a(\gt, \infty))< E(a(\go, \infty))+E(a(\gt^-(\gt_{|\gt|}+1)))+E(a(\gt^-(\gt_{|\gt|}+1), \infty))$.
\end{lemma}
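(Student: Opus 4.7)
The plan is to reduce each of the four equivalences to an elementary algebraic manipulation by substituting the closed-form values from Lemma~\ref{lemma4}. Recall that for every word $\go$, $E(a(\go))=p_\go s_\go^{2}V$ unconditionally, while $E(a(\go,\infty))$ equals $\tfrac{43}{9}\,p_\go s_\go^{2}V$ or $\tfrac{43}{3}\,p_\go s_\go^{2}V$ according as $\go_{|\go|}\ge 2$ or $\go_{|\go|}=1$. Every quantity appearing in the lemma is therefore a rational multiple of either $p_\go s_\go^{2}V$ or $p_\gt s_\gt^{2}V$.

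The heart of the proof is two ``split identities'' that apply uniformly. Using $(\go 1)_{|\go 1|}=1$ together with $p_{\go 1}s_{\go 1}^{2}=\tfrac{1}{64}p_\go s_\go^{2}$, Lemma~\ref{lemma4} gives
\[
E(a(\go 1))+E(a(\go 1,\infty))=\Bigl(1+\tfrac{43}{3}\Bigr)\cdot\tfrac{1}{64}\,p_\go s_\go^{2}V=\tfrac{23}{96}\,E(a(\go)).
\]
Setting $\gs=\gt^{-}(\gt_{|\gt|}+1)$, so that $\gs_{|\gs|}\ge 2$ always, a two-case computation according as $\gt_{|\gt|}=1$ (where $p_\gs s_\gs^{2}=\tfrac{3}{8}p_\gt s_\gt^{2}$ and $E(a(\gt,\infty))=\tfrac{43}{3}\,p_\gt s_\gt^{2}V$) or $\gt_{|\gt|}\ge 2$ (where $p_\gs s_\gs^{2}=\tfrac{1}{8}p_\gt s_\gt^{2}$ and $E(a(\gt,\infty))=\tfrac{43}{9}\,p_\gt s_\gt^{2}V$) yields in both subcases the uniform identity
\[
E(a(\gs))+E(a(\gs,\infty))=\tfrac{13}{86}\,E(a(\gt,\infty)).
\]

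With these two identities in hand, substituting into the right-hand sides of (i)--(iv) turns each claimed equivalence into an inequality between weighted single-point errors. In (i) both shrinkage factors equal $\tfrac{23}{96}$, so the inequality collapses to $\tfrac{73}{96}\bigl(E(a(\gt))-E(a(\go))\bigr)<0$, which is exactly the hypothesis $E(a(\go))>E(a(\gt))$; part (iv) is identical with $\tfrac{13}{86}$ in place of $\tfrac{23}{96}$. Parts (ii) and (iii) mix the two shrinkage factors $\tfrac{23}{96}$ and $\tfrac{13}{86}$ across the two sides, but the same rearrangement shows the stated inequality is equivalent to the corresponding comparison of single-point errors.

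The main obstacle is the second split identity: one must verify that the ratio $\tfrac{13}{86}$ emerges in both subcases $\gt_{|\gt|}=1$ and $\gt_{|\gt|}\ge 2$. This is a short but non-obvious cancellation in which the extra factor of $3$ appearing in $p_\gs/p_\gt$ when $\gt_{|\gt|}=1$ exactly compensates the extra factor of $3$ in $E(a(\gt,\infty))$ in the same case, so that a uniform constant ultimately governs the algebra in all four statements.
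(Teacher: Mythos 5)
Your substitution approach is essentially the same as the paper's, and your two split identities are correct: $E(a(\omega 1))+E(a(\omega 1,\infty))=\tfrac{23}{96}E(a(\omega))$, and with $\sigma=\tau^{-}(\tau_{|\tau|}+1)$, $E(a(\sigma))+E(a(\sigma,\infty))=\tfrac{13}{86}E(a(\tau,\infty))$ in both subcases $\tau_{|\tau|}=1$ and $\tau_{|\tau|}\geq 2$. Parts (i) and (iv) then collapse to a genuine equivalence exactly as you say, since the same shrinkage factor appears on both sides.

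The gap is in parts (ii) and (iii), and your phrase ``the same rearrangement shows the stated inequality is equivalent'' is where it hides. Take (ii): put $A=E(a(\omega))$ and $B'=E(a(\tau,\infty))$; substituting your identities turns the displayed inequality into $\tfrac{23}{96}A+B'<A+\tfrac{13}{86}B'$, which rearranges to $\tfrac{73}{86}B'<\tfrac{73}{96}A$, i.e.\ $B'<\tfrac{43}{48}A$. That is \emph{strictly stronger} than $B'<A$, so the algebra alone yields only the implication ``displayed inequality $\Rightarrow A>B'$'' and not the converse. To obtain the full ``if and only if'' you would need the additional arithmetic fact that the ratio $E(a(\tau,\infty))/E(a(\omega))$, which always has the form $43\cdot 3^{a}\cdot 8^{b}$ for integers $a,b$, never lies in the interval $[\tfrac{43}{48},1)$; this is true but is a separate claim you neither state nor prove. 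Part (iii) has the mirror-image issue: the algebra gives the displayed inequality iff $E(a(\tau))<\tfrac{48}{43}E(a(\omega,\infty))$. For what it is worth, the paper's own proof of (ii) has precisely the same defect: it derives ``LHS $<$ RHS iff $p_\omega s_\omega^2 V>\tfrac{48}{43}\cdot\tfrac{43}{3}p_\tau s_\tau^2 V$'' and then observes this is $>\tfrac{43}{3}p_\tau s_\tau^2 V$, thereby establishing only one direction before asserting the equivalence. So you are reproducing the paper's method faithfully, including its gap, but your blanket claim of equivalence papers over a step that does not follow from the computation given.
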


\begin{proof} To prove $(i)$, using Lemma~\ref{lemma4}, we see that

\begin{align*}
LHS&=E(a(\go1))+E(a(\go 1, \infty))+E(a(\gt))=p_{\go1}s_{\go1}^2V(1+\frac {43}{3})+p_\gt s_\gt^2 V\\
&=\frac 1{64}p_\go s_\go^2 V(1+\frac{43}{3})+p_\gt s_\gt^2 V,\\
RHS&=E(a(\go))+E(a(\gt1))+E(a(\gt 1, \infty))=p_\go s_\go^2 V+\frac 1{64}p_{\gt} s_{\gt}^2V (1+\frac {43}{3}).
\end{align*}
Thus, $LHS< RHS$ if and only if $\frac 1{64}p_\go s_\go^2 V(1+\frac{43}{3})+p_\gt s_\gt^2 V< p_\go s_\go^2 V+\frac 1{64}p_{\gt} s_{\gt}^2V (1+\frac {43}{3})$, which yields $p_\go s_\go^2 V > p_\gt s_\gt^2 V$, i.e., $E(a(\go))> E(a(\gt))$. Thus $(i)$ is proved.
To prove $(ii)$, let us first assume $\gt_{|\gt|}=1$. Notice that  $p_{\gt^-(\gt_{|\gt|}+1)}=p_{\gt^-} p_{\gt_{|\gt|}+1}=\frac 3 2 p_\gt$, and  $s_{\gt^-(\gt_{|\gt|}+1)}=s_{\gt^-} s_{\gt_{|\gt|}+1}=\frac 1 2 s_\gt$, and then using Lemma~\ref{lemma4}, we have
\begin{align*}
LHS&=E(a(\go1))+E(a(\go 1, \infty))+E(a(\gt, \infty))=p_{\go1}s_{\go1}^2V(1+\frac {43}{3})+\frac{43}{3} p_\gt s_\gt^2 V\\
&=\frac 1{64}p_\go s_\go^2 V(1+\frac{43}{3})+\frac{43}{3} p_\gt s_\gt^2 V,\\
RHS&=E(a(\go))+E(a(\gt^-(\gt_{|\gt|}+1)))+E(a(\gt^-(\gt_{|\gt|}+1), \infty))\\
&=p_\go s_\go^2 V+p_{\gt^-(\gt_{|\gt|}+1)}s_{\gt^-(\gt_{|\gt|}+1)}^2 V(1+\frac{43}{9})=p_\go s_\go^2 V+p_{\gt}s_{\gt}^2 V \frac 38(1+\frac{43}{9}).
\end{align*}
 Thus, $LHS< RHS$ if and only if $\frac 1{64}p_\go s_\go^2 V(1+\frac{43}{3})+\frac{43}{3} p_\gt s_\gt^2 V< p_\go s_\go^2 V+p_{\gt}s_{\gt}^2 V \frac 38(1+\frac{43}{9})$, which yields
 \[p_\go s_\go^2 V > \frac {43}{3} p_\gt s_\gt^2 V \frac{ \Big(\frac{43}{3}-\frac 38 (1+\frac{43}{9})\Big) \frac {3}{43}}{1-\frac 1 {64}(1+\frac {43}{3})}> \frac {43}{3} p_\gt s_\gt^2 V,\] i.e., $E(a(\go))> E(a(\gt, \infty))$. Thus, $(ii)$ is proved under the assumption $\gt_{|\gt|}=1$. Similarly by taking $\gt_{|\gt|}\geq  2$, we can prove $(ii)$. Thus, the proof of $(ii)$ is complete. Proceeding in the similar way, $(iii)$ and $(iv)$ can be proved. This concludes the proof of the lemma.
\end{proof}

The following proposition gives some properties of $E(\go)$ for $\go\in \D N^\ast$.

\begin{prop} \label{prop10}
Let $\go, \gt$ be two nonempty words in $\D N^\ast$ with $p_\go=p_\gt$. Then, the quantization error satisfies the following conditions:

$(i)$ $E(a(\go))=E(a(\gt))$.

$(ii)$ If $\go_{|\go|}=\gt_{|\gt|}$, then $E(a(\go, \infty))=E(a(\gt, \infty))$.

$(iii)$ If $\go_{|\go|}\neq \gt_{|\gt|}=1$, then $E(a(\go, \infty))=\frac 13E(a(\gt, \infty))$.

$(iv)$ If $1=\go_{|\go|}\neq \gt_{|\gt|}$, then $E(a(\go, \infty))=3 E(a(\gt, \infty))$.
\end{prop}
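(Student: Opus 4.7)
The plan is to prove all four parts directly by substituting the closed-form expressions from Lemma~\ref{lemma4} and then applying Lemma~\ref{lemma5}. Recall that Lemma~\ref{lemma4} together with the notation \eqref{eq43} gives
\[
E(a(\go))=p_\go s_\go^2 V, \qquad E(a(\go,\infty))=\begin{cases} \frac{43}{9}\, p_\go s_\go^2 V & \text{if } \go_{|\go|}\geq 2,\\ \frac{43}{3}\, p_\go s_\go^2 V & \text{if } \go_{|\go|}=1,\end{cases}
\]
and Lemma~\ref{lemma5} tells us that the hypothesis $p_\go=p_\gt$ forces $s_\go=s_\gt$. Hence the common factor $p_\go s_\go^2 V=p_\gt s_\gt^2 V$ appears in every quantity under consideration, and the four assertions will reduce to bookkeeping of the constants $\frac{43}{9}$ versus $\frac{43}{3}$.

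First I would handle $(i)$: since $p_\go s_\go^2=p_\gt s_\gt^2$, we immediately get $E(a(\go))=p_\go s_\go^2 V=p_\gt s_\gt^2 V=E(a(\gt))$. For $(ii)$, the hypothesis $\go_{|\go|}=\gt_{|\gt|}$ ensures that both $\go_{|\go|}$ and $\gt_{|\gt|}$ are either simultaneously $\geq 2$ or simultaneously equal to $1$. Thus the same multiplicative constant ($\frac{43}{9}$ or $\frac{43}{3}$) applies to both $E(a(\go,\infty))$ and $E(a(\gt,\infty))$, and equality again follows from $p_\go s_\go^2=p_\gt s_\gt^2$.

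For $(iii)$, the assumption $\go_{|\go|}\neq 1$ and $\gt_{|\gt|}=1$ means $\go_{|\go|}\geq 2$, so Lemma~\ref{lemma4} yields
\[
E(a(\go,\infty))=\frac{43}{9}\, p_\go s_\go^2 V=\frac{1}{3}\cdot\frac{43}{3}\, p_\gt s_\gt^2 V=\frac{1}{3}\, E(a(\gt,\infty)),
\]
using $p_\go s_\go^2=p_\gt s_\gt^2$. Part $(iv)$ is symmetric: with $\go_{|\go|}=1$ and $\gt_{|\gt|}\geq 2$ we get
\[
E(a(\go,\infty))=\frac{43}{3}\, p_\go s_\go^2 V=3\cdot\frac{43}{9}\, p_\gt s_\gt^2 V=3\, E(a(\gt,\infty)).
\]

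There is no substantive obstacle here: the proposition is really a packaging of Lemma~\ref{lemma4} (which computes the two possible forms of the error) together with Lemma~\ref{lemma5} (which promotes equality of probabilities to equality of contraction ratios). The only mild care needed is to check the dichotomy on the last letter $\go_{|\go|}$ in each case to pick the correct prefactor $\frac{43}{9}$ or $\frac{43}{3}$; once that is done, the ratio $\frac{43/3}{43/9}=3$ delivers $(iii)$ and $(iv)$ without further work.
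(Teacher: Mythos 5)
Your proof is correct and follows essentially the same route as the paper: both arguments combine Lemma~\ref{lemma5} (to pass from $p_\go=p_\gt$ to $s_\go=s_\gt$) with the explicit formulas $E(a(\go))=p_\go s_\go^2 V$ and $E(a(\go,\infty))\in\{\frac{43}{9},\frac{43}{3}\}p_\go s_\go^2 V$ from Lemma~\ref{lemma4}, and then compare the prefactors case by case on the last letter. The only cosmetic difference is that the paper dispatches part $(iv)$ by appealing to symmetry with $(iii)$, while you write out the computation, which is equally valid.
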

\begin{proof}

$(i)$ By Lemma~\ref{lemma5}, $p_\go=p_\gt$ implies $s_\go=s_\gt$, and so
\[E(a(\go))=p_\go s_\go^2V=p_\gt s_\gt^2 V=E(a(\gt)).\]

$(ii)$ Here two cases can arise: $\go_{|\go|}=\gt_{|\gt|}=1$ or $\go_{|\go|}=\gt_{|\gt|}\geq 2$. In either case, using Lemma~\ref{lemma4} one can see that
$E(a(\go, \infty))=E(a(\gt, \infty))$.

$(iii)$ If $\go_{|\go|}\neq \gt_{|\gt|}=1$, then, $\go_{|\go|}\geq 2$ and $\gt_{|\gt|}=1$, and so by Lemma~\ref{lemma4} and Lemma~\ref{lemma5}, we get
\[E(a(\go,\infty))=\frac{43}{9} p_\go s_\go^2 V=\frac 13 \frac {43}{3} p_\gt s_\gt^2 V=\frac 13 E(a(\gt, \infty)).  \]
Due to symmetry $(iv)$ follows from $(iii)$, and thus the proof of the proposition is complete.
\end{proof}

\begin{prop} \label{prop50}
Let $\ga_n$ be an optimal set of $n$-means for $n\geq 2$. Then, for $c\in \ga_n$, we have $c=a(\go)$, or $c=a(\go, \infty)$ for some $\go \in \D N^\ast$.
\end{prop}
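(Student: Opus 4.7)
The plan is to argue by strong induction on $n$, using Proposition~\ref{prop3} as the structural engine. The base cases $n=2$ and $n=3$ are already handled by Lemma~\ref{lemma11} and Lemma~\ref{lemma12}: the explicit optimal sets $\{a(1), a(1,\infty)\}$ and $\{a(1), a(2), a(2,\infty)\}$ are clearly of the required form.

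For the inductive step, fix $n\ge 4$ and assume the proposition for every optimal set with strictly fewer than $n$ points. Apply Proposition~\ref{prop3} to $\ga_n$ to obtain the positive integer $k$ and the decomposition
\[
\ga_n=\bigcup_{j=1}^{k}\ga_j\;\cup\;\{a(k,\infty)\}, \qquad \ga_j:=\ga_n\cap J_j,\quad n_j:=\te{card}(\ga_j),
\]
with $n=n_1+\cdots+n_k+1$. The point $a(k,\infty)$ is already of the declared form (take $\go=k$), so it remains to show that every element of each $\ga_j$ is also of the form $a(\go)$ or $a(\go,\infty)$ for some $\go\in\D N^\ast$.

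Here is where I would use the second half of Proposition~\ref{prop3}: $S_j^{-1}(\ga_j)$ is an optimal set of $n_j$-means for the measure $P$ itself, with $n_j<n$. By the inductive hypothesis, every element of $S_j^{-1}(\ga_j)$ has the form $a(\gt)$ or $a(\gt,\infty)$ for some $\gt\in\D N^\ast$ (including the convention $a(\es)=E(X)=\tfrac47$, which covers the $n_j=1$ case). It then suffices to verify the two compatibility identities
\[
S_j(a(\gt))=a(j\gt), \qquad S_j(a(\gt,\infty))=a(j\gt,\infty),
\]
for every $\gt\in\D N^\ast$ and every $j\in\D N$. The first identity is exactly the observation in Note~\ref{note1} that $E(S_\gs(X))=S_\gs(E(X))$, applied with $\gs=j\gt$. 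Applying $S_j$ to each element of $\ga_j$ then converts the induction hypothesis into the required conclusion.

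The only nontrivial verification is the second identity, which I expect to be the one technical step. I would prove it directly from definition~\eqref{eq90}: since $J_{j\gt^-(\gt_{|\gt|}+i)}=S_j(J_{\gt^-(\gt_{|\gt|}+i)})$ for every $i\ge 1$, one has $J_{(j\gt,\infty)}=S_j(J_{(\gt,\infty)})$, and $P(J_{j\gt^-(\gt_{|\gt|}+i)})=p_j\,p_{\gt^-(\gt_{|\gt|}+i)}$. Applying the change of variables $y=S_j^{-1}(x)$ (equivalently, using the image-measure identity $P\restriction J_{(j\gt,\infty)}$ is the pushforward, up to the scalar $p_j$, of $P\restriction J_{(\gt,\infty)}$ under $S_j$), the conditional expectation $a(j\gt,\infty)$ factors through $S_j$; since $S_j$ is affine, one extracts $S_j$ from the ratio-of-integrals, yielding $a(j\gt,\infty)=S_j(a(\gt,\infty))$. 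Combining the two identities, every $c\in\ga_j$ equals $a(j\gt)$ or $a(j\gt,\infty)$ for some $\gt\in\D N^\ast$, which together with the contribution $a(k,\infty)$ finishes the induction.
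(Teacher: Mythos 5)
Your proof is correct, and it runs on the same underlying engine as the paper's: iterate the structural decomposition given by Propositions~\ref{prop1} and \ref{prop2} until you reach a cell containing exactly one point of $\ga_n$. The difference is purely organizational. The paper phrases this as a descent along the word tree: apply Proposition~\ref{prop2} to $\ga_n$ to get a first-level decomposition, then (with the brief remark that ``similarity mappings preserve the ratio of the distances'') apply it again inside any $J_{j_1}$ that still holds at least two points, and so on. You instead run a strong induction on $n$ and let Proposition~\ref{prop3} do the self-similar bookkeeping, since it already records that each $S_j^{-1}(\ga_j)$ is an optimal set of $n_j < n$ means for $P$ itself. This cleanly replaces the paper's informal ``using Proposition~\ref{prop2} again'' with a single citable fact, and it forces you to state and check the two compatibility identities $S_j(a(\gt))=a(j\gt)$ and $S_j(a(\gt,\infty))=a(j\gt,\infty)$, which the paper leaves implicit; the second really does need the $J_{(j\gt,\infty)}=S_j(J_{(\gt,\infty)})$ observation and a change of variables, exactly as you sketch. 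One small point worth making explicit: the induction bottoms out at $n_j=1$, where Proposition~\ref{prop50} as stated (for $n\ge 2$) does not directly apply, and you correctly patch this with the observation from Note~\ref{note1} that the optimal set of one mean is $\{E(X)\}=\{a(\es)\}$, and $\es\in\D N^\ast$ by the paper's conventions. Overall this is a slightly more careful version of the same argument; it buys you an explicit termination argument (strictly decreasing $n_j$) in place of the paper's ``proceeding inductively as before,'' at the modest cost of proving the pushforward identity for $a(\cdot,\infty)$.
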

\begin{proof} Let $\ga_n$ be an optimal set of $n$-means for $n\geq 2$ such that $c\in \ga_n$.
By Proposition~\ref{prop2}, there exists a positive integer $k_1$ such that  $\ga_n\ii J_{j_1}\neq \es$ for $1\leq j_1\leq k_1$,  and $\te{card}(\ga_n \ii J_{(k_1, \infty)})=1$, and $\ga_n$ does not contain any point from the open intervals $(S_{\ell}(1), S_{\ell+1}(0))$ for $1\leq \ell\leq k_1$. If $c\in \ga_n \ii J_{(k_1, \infty)}$, then $c=a(k_1, \infty)$. If $c\in \ga_n\ii J_{j_1}$ for some $1\leq j_1\leq k_1$ with $\te{card}(\ga_n\ii J_{j_1})=1$, then $c=a(j_1)$. Suppose that $c \in \ga_n\ii J_{j_1}$ for some $1\leq j_1\leq k_1$ and $\te{card}(\ga_n\ii J_{j_1})\geq 2$. Then, as similarity mappings preserve the ratio of the distances of a point from any other two points, using Proposition~\ref{prop2} again, there exists a positive integer $k_2$ such that  $\ga_n\ii J_{j_1j_2}\neq \es$ for $1\leq j_2\leq k_2$, and $\te{card}(\ga_n \ii J_{(j_1k_2, \infty)})=1$, and $\ga_n$ does not contain any point from the open intervals $(S_{j_1\ell}(1), S_{j_1(\ell+1)}(0))$ for $1\leq \ell\leq k_2$. If $c\in \ga_n \ii J_{(j_1k_2, \infty)}$ then $c=a(j_1k_2, \infty)$. Suppose that $c\in \ga_n\ii J_{j_1j_2}$ for some $1\leq j_2\leq k_2$. If $\te{card}(\ga_n\ii J_{j_1j_2})=1$, then $c=a(j_1j_2)$. If $\te{card}(\ga_n\ii J_{j_1j_2})\geq 2$, proceeding inductively as before, we can find a word $\go \in \D N^\ast$, such that either $c \in \ga_n\ii J_\go$ with $\te{card}(\ga_n\ii J_\go)=1$ implying $c=a(\go)$, or  $c\in \ga_n\ii J_{(\go, \infty)}$ with $\te{card}(\ga_n\ii J_{(\go, \infty)})=1$ implying $c=a(\go, \infty)$. Thus, the proof of the proposition is complete.
\end{proof}

By Proposition~\ref{prop50}, we can say that if $\ga_n$ is an optimal set of $n$-means for any $n\geq 2$, then the error contributed by any element $c\in \ga_n$ is given by $E(a(\go))$ if $c=a(\go)$, or by $E(a(\go, \infty))$ if $c=a(\go, \infty)$, where $\go \in \D N^\ast$. We are now ready to give the proof of Theorem~\ref{Th1}.

\subsection*{Proof of Theorem~\ref{Th1}}
By Lemma~\ref{lemma11} and Lemma~\ref{lemma12}, it is known that the optimal sets of two- and three-means are $\{a(1), a(1, \infty)\}$ and
$\{a(1), a(2), a(2, \infty)\}$.  Since \[E(a(1, \infty))=\frac{43}{3} p_1s_1^2V>p_1s_1^2 V=E(a(1)),\]
the theorem is true for $n=2$. For $n\geq 2$, let $\ga_n$ be an optimal
set of $n$-means. Let $\ga_n:=\set{a(i) : 1\leq i\leq n}$. Let $\tilde  E(a(i))$ and $W(\ga_n)$ be defined as in the hypothesis. If $a(j) \not \in W(\ga_n)$, i.e., if  $a(j) \in \ga_n\setminus W(\ga_n)$, then by Lemma~\ref{lemma16}, the error
\[\sum_{a(i)\in (\ga_n\setminus \set{a(j)})}E(a(i))+E(a(\go^-(\go_{|\go|}+1)))+E(a(\go^-(\go_{|\go|}+1), \infty)) \te{ if } a(j)=a(\go, \infty),\]
or
\[\sum_{a(i)\in (\ga_n\setminus \set{a(j)})}E(a(i))+E(a(\go1))+E(a(\go1, \infty)) \te{ if } a(j)=a(\go),\]
obtained in this case is strictly greater than the corresponding error obtained in the case when $a(j)\in W(\ga_n)$. Hence for any $a(j) \in W(\ga_n)$, the set $\ga_{n+1}(a(j))$, where
\[\ga_{n+1}(a(j))=\left\{\begin{array}{ll}
(\ga_n\setminus \set{a(j)})\uu \set{a(\go^-(\go_{|\go|}+1)), \, a(\go^-(\go_{|\go|}+1), \infty)} \te{ if } a(j)=a(\go, \infty), &\\
(\ga_n \setminus \set{a(j)})\uu \set{a(\go1), \, a(\go1, \infty)} \te{ if } a(j)=a(\go), &
\end{array}\right.
\] is an optimal set of $(n+1)$-means, and the number
of such sets is
\[\te{card}\Big(\UU_{\ga_n \in \C{C}_n}\{\ga_{n+1}(a(j)) : a(j) \in W(\ga_n)\}\Big).\]
Thus, the proof of the theorem is complete.
\qed

\begin{figure}
\centerline{\includegraphics[ ]{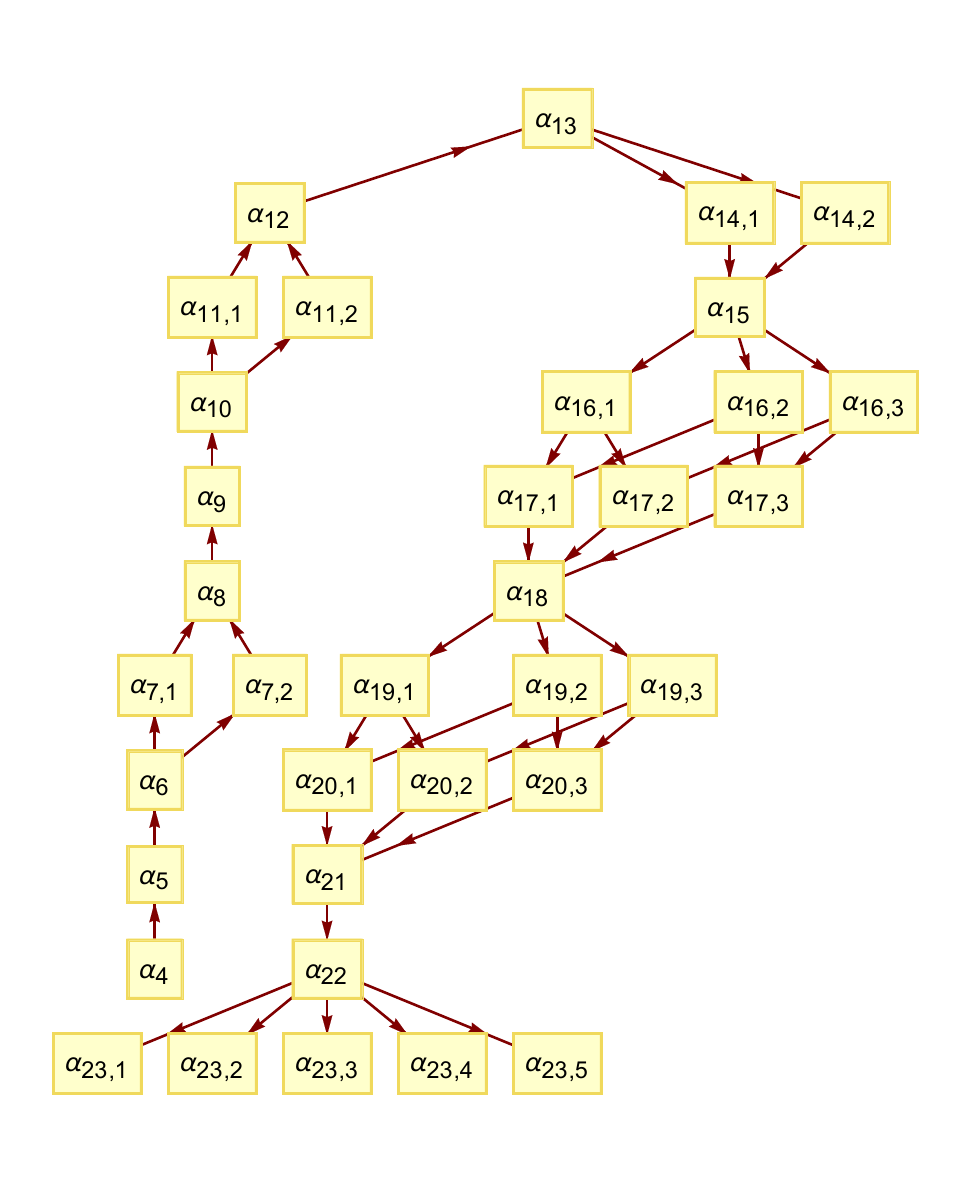}}
\caption{Tree diagram of the optimal sets from $\ga_4$ to $\ga_{23}$. \label{Fig2}}
\end{figure}

\medskip
The following results and observations are due to the induction formula given by Theorem~\ref{Th1}.

\section{Results and observations about optimal sets of $n$-means}

Recall that the optimal set of one-mean consists of the expected value of the random variable $X$, and the corresponding quantization error is its variance. Let $\ga_n$ be an optimal set of $n$-means, i.e., $\ga_n \in \C C_n$, and then for any $a\in \ga_n$, we have $a=a(\go)$ or $a=a(\go, \infty)$ for some $\go \in \D N^\ast$.
Theorem~\ref{Th1} implies that if $\te{card}(\C C_n)=k$ and  $\te{card}(\C C_{n+1})=m$, then either $1\leq k\leq m$, or $1\leq m\leq k$, for example from Figure~\ref{Fig2}, we see that the number of $\ga_{15}=1$, the number of $\ga_{16}=3$, the number of $\ga_{17}=3$, and the number of $\ga_{18}=1$. Thus, there exists a sequence $\set{n_k}_{k=1}^\infty$ of positive integers such that for all $n\geq 1$, we have $\te{card}(\C C_n)=n_k$, and then we write
\[\C C_n=\left\{\begin{array} {cc}
\set{\ga_n} & \te{ if } n_k=1, \\
\set{\ga_{n, i} : 1\leq i\leq n_k} & \te{ if } n_k\geq 2.
\end{array} \right.\]
In addition, Theorem~\ref{Th1} implies that a single $\ga\in \C C_n$ can produce multiple distinct $\ga\in \C C_{n+1}$, and multiple distinct $\ga\in \C C_n$ can produce one common $\ga \in \C C_{n+1}$. For $\ga \in \C C_n$, by $\ga\rightarrow \gb$, it is meant that $\gb \in \C C_{n+1}$ and $\gb$ is produced from $\ga$.
Thus, from Figure~\ref{Fig2}, we see that
 \begin{align*} &\left\{\ga _{18}\to \ga _{19,1},\ga _{18}\to \ga _{19,2}, \ga _{18}\to \ga _{19,3}\right\},\\
 &\left\{\left\{\ga_{19,1}\to \ga _{20,1}, \ga_{19,1}\to \ga_{20,2}\right\}, \left\{\ga _{19,2}\to \ga _{20,1},\ga _{19,2}\to \ga _{20,3}\right\},\left\{\ga _{19,3}\to \ga _{20,2},\ga _{19,3}\to \ga _{20,3}\right\}\right\},\\
  &\left\{\ga _{20,1}\to \ga _{21},\ga _{20,2}\to \ga _{21},\ga _{20,3}\to \ga _{21}\right\}.
 \end{align*}
Again, we have
\begin{align*}
\ga_{15}&=\set{a(111), a(111, \infty), a(12), a(13), a(13, \infty), a(21), a(22), a(23), a(23,\infty), a(31), a(32), \\
& \qquad \qquad a(32, \infty), a(4), a(5), a(5, \infty)}
\te{ with } V_{15}=\frac{27}{598016}=0.0000451493;\\
\ga_{16, 1}&=\set{a(111), a(111, \infty), a(12), a(13), a(13, \infty), a(211), a(211, \infty), a(22), a(23), a(23,\infty), \\
&  a(31), a(32), a(32, \infty), a(4), a(5), a(5, \infty)};\\
\ga_{16, 2}&=\set{a(111), a(111, \infty), a(12), a(13), a(13, \infty), a(21), a(22), a(23), a(23,\infty), a(31), a(32), \\
&  a(32, \infty), a(41), a(41, \infty), a(5), a(5, \infty)} \\
\ga_{16, 3}&=\set{a(111), a(111, \infty), a(121), a(121, \infty), a(13), a(13, \infty), a(21), a(22), a(23), a(23,\infty),  \\
&  a(31), a(32), a(32, \infty), a(4), a(5), a(5, \infty)}  \te{ with } V_{16}=\frac{4635}{117211136}=0.000039544; \\
\ga_{17, 1}&=\set{a(111), a(111, \infty), a(12), a(13), a(13, \infty), a(211), a(211, \infty), a(22), a(23), a(23,\infty), \\
& a(31), a(32), a(32, \infty), a(41), a(41, \infty), a(5), a(5, \infty)};\\
\ga_{17, 2}&=\set{a(111), a(111, \infty), a(121), a(121, \infty), a(13), a(13, \infty), a(211), a(211, \infty), a(22), a(23), \\
& a(23,\infty),   a(31), a(32), a(32, \infty), a(4), a(5), a(5, \infty)}.
\end{align*}
\begin{align*}
\ga_{17, 3}&=\set{a(111), a(111, \infty), a(121), a(121, \infty), a(13), a(13, \infty), a(21), a(22), a(23), a(23,\infty),  \\
& a(31), a(32), a(32, \infty), a(41), a(41, \infty), a(5), a(5, \infty)} \te{ with } V_{17}=\frac{1989}{58605568}=0.0000339388; \\
\ga_{18}&=\set{a(111), a(111, \infty), a(121), a(121, \infty), a(13), a(13, \infty), a(211), a(211, \infty), a(22), a(23), \\
&  \qquad \qquad a(23,\infty), a(31), a(32), a(32, \infty), a(41), a(41, \infty), a(5), a(5, \infty)} \\
&  \qquad \qquad \te{ with } V_{18}=\frac{3321}{117211136}=0.0000283335;
\end{align*}
and so on.

\end{document}